\title{Small data global existence and decay for two dimensional wave maps}
\author{Willie Wai Yeung Wong\Affiliation{Michigan State University, East Lansing, USA; \url{wongwwy@math.msu.edu}}}
\keywords{AP, LinearWaves, HyperboloidalMethod, VectorFieldMethod}
\newcommand*\sobd{\mathfrak{s}} %
\begin{document}
\maketitle

\begin{wwwabstract}
	We prove the small-data global existence for the wave-map equation on $\Real^{1,2}$ using a variant of the vector field method. 
	The main innovations lie in the introduction of two new linear estimates. 
	First is the control of the dispersive decay of the solution $\phi$ itself (as opposed to its derivatives), via a logarithmic weighted Hardy inequality. This control has not been previously established using purely physical space methods in two spatial dimensions.  
	Second is a point-wise decay estimate for a twisted derivative of $\phi$ associated to the Morawetz $K$ multiplier, that cannot be reduced to point-wise decay estimates associated to the standard commutator vector fields. 
	As the linear theory is largely similar between dimensions, and in view of the novelty of the second innovation even in higher dimensions, we include a discussion of the method for $\Real^{1,d}$ in general.
	Both linear estimates are used crucially in our study: the control of the wave-map equation in the small data regime necessarily requires understanding the dispersive behavior of the bare solution $\phi$ itself, by virtue of the equation it satisfies. The point-wise decay for the twisted derivative allows us to avoid certain top-order logarithmic energy growths; this is indispensable for extending our argument from the case of compactly supported initial data (to which our methods are most naturally adapted) to initial data that are strongly localized but not necessarily of compact support, via an iterative construction. 
\end{wwwabstract}

\section{Introduction}

We consider the equation 
\begin{equation}\label{eq:WMmain}
	\Box_\eta \phi = \phi \cdot \eta(\D*\phi, \D*\phi) 
\end{equation}
which models the wave-map equation, in the small data regime. Here $\eta$ is the Minkowski metric on $\Real^{1,d}$, represented by the matrix $\diag(-1, 1, \ldots, 1)$ in rectangular coordinates, and so in standard coordinates we can rewrite \eqref{eq:WMmain} as
\[ 
	-\partial^2_{tt} \phi + \sum_{i = 1}^d \partial^2_{ii} \phi = \phi \big[- (\partial_t\phi)^2 + \sum_{i = 1}^d (\partial_i\phi)^2 \big].
\]
The general wave-map equation, in local coordinates, is the quasidiagonal system of semilinear wave equations for $\phi = (\phi^1, \ldots, \phi^N): \Real^{1,d} \to \Real^N$ given by the component-wise equations
\[ \Box_\eta \phi^C = \sum_{A,B = 1}^N \Gamma^C_{AB}(\phi) \eta(\D*\phi^A, \D*\phi^B)\]
where the Christoffel symbols $\Gamma$ captures the geometry of the target manifold. For solutions which are perturbations of a constant solution, we can perform a Taylor expansion of the functions $\Gamma$ and arrive at \eqref{eq:WMmain}, which captures the leading order contributions. 
In the small-data regime where decay can be proven, one can rather straightforwardly upgrade results concerning \eqref{eq:WMmain} to the full wave-map system. 
For more about the wave-map equation in general, see \cite{ShaStr1998}.

Using the dispersive decay of the linear wave equation, one can easily show, using the vector field method of Klainerman, that when the domain has spatial dimension $d \geq 3$ that solutions to \eqref{eq:WMmain} exist for all time and scatters provided that the initial data is sufficiently small in certain weighted energy norm. 
Indeed, multiplying \eqref{eq:WMmain} by $\partial_t \phi$ and integrating by parts on the domain $[0,T]\times \Real^d$, we see the energy inequality
\begin{equation}
	\frac12 \int_{\{T\}\times\Real^d} (\partial_t\phi)^2 + \abs{\nabla \phi}^2 \D{x} \leq \frac12 \int_{\{0\}\times \Real^d} (\partial_t\phi)^2 + \abs{\nabla\phi}^2  \D{x}+ \int_0^T \int_{\Real^d} \abs{\phi (\partial \phi)^3} \D{x} \D{t}.
\end{equation}
By Gronwall, we see then 
\begin{equation}
	\int_{\{T\}\times\Real^d} (\partial_t\phi)^2 + \abs{\nabla \phi}^2 \D{x} \leq \Big[ \int_{\{0\}\times \Real^d} (\partial_t\phi)^2 + \abs{\nabla\phi}^2 \D{x} \Big]\cdot \exp \Big[ C \int_0^T \sup_{\Real^d} \abs{\phi \partial\phi} \D{t} \Big].
\end{equation}
Using that the expected $L^\infty$ decay for the linear wave equation is at the rate $(1+t)^{(1-d)/2}$, we see that when $d \geq 3$ the integral within the exponential is expected to converge, giving global uniform bounds on the energy. 
This argument can be made precise via the Klainerman-Sobolev inequalities \cite{Klaine1985, Klaine1987} which allows us to relate boundedness of weighted energies with $L^\infty$ decay of the solution. 

In dimension $d = 2$ we see, however, that the expected $L^\infty$ decay for the linear wave equation is only at the rate $(1+t)^{-1/2}$, and so $\sup_{\Real^2} \abs{\phi\partial\phi}$ is not integrable in time. 
This difficulty can in principle be overcome by the fact that the nonlinearity $\eta(\D*\phi,\D*\phi)$ satisfies the null condition \cite{Klaine1986, Christ1986, Alinha2001, Alinha2001a}, which would imply that for all intents and purposes the term 
\[ \sup_{\Real^2} \abs{\phi\partial\phi} \lesssim (1+t)^{-3/2}.\]
The extra $t^{-1/2}$ decay upgrades the nonlinearity to be integrable in time. 

There is however, one other difficulty in dimension $d = 2$, and this relates to obtaining the decay estimate for $\phi$ itself. Classical Klainerman-Sobolev inequalities, when applied to the standard energy estimates, only control $\abs{\partial\phi}$ in $L^\infty$. 
One can na\"ively estimate $\abs{\phi(t)} \leq \abs{\phi(0)} + \int_0^t \abs{\partial\phi(s)} \D{s}$, but at a loss of one factor of $t$ decay (see also discussion in Remark 2, \S6.5 of \cite{Horman1997}).
A better method for dimensions $d \geq 3$ instead modifies the vector field method by using the Morawetz energy (where instead of multiplying by $\partial_t\phi$ and integrating by parts, one multiplies by $(t^2 + r^2)\partial_t \phi + 2tr \partial_r \phi + (d-1) t \phi$).
When $d \geq 3$, the resulting energy is coercive on $\norm[L^2(\Real^d)]{\phi}$, and direct applications of the classical vector field method yields also pointwise decay for the solution $\phi$ itself. This argument is worked out in detail in \cite{Klaine2001}.
When $d = 2$, however, the coercivity is lost, and while we have good control on $\eta(\D*\phi,\D*\phi)$ by virtue of the Klainerman-Sobolev inequalities and the null condition, we have no control on the $\phi$ factor in the nonlinearity in \eqref{eq:WMmain}.   

The goal of the present manuscript is to present a modified vector field method that allows us to recover, when $d = 2$, decay estimates on $\phi$ itself up to a logarithmic loss.
Through this control we are able to prove:
\begin{thm}
	Let $\phi_0, \phi_1\in C^\infty_0(B(0,1))$, and fix $k \geq 4$. There exists an $\epsilon > 0$ such that whenever $\norm[H^{k+1}]{\phi_0} + \norm[H^k]{\phi_1} < \epsilon$, there exists a global solution to \eqref{eq:WMmain} with $\phi(0,x) = \phi_0(x)$ and $\partial_t \phi(0,x) = \phi_1(x)$. Furthermore the solution decays to zero as $t\to +\infty$. 
\end{thm}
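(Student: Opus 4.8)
The plan is a continuity (bootstrap) argument on the hyperboloidal foliation. Since $\phi_0,\phi_1$ are supported in $B(0,1)$, finite speed of propagation and a harmless time translation confine the solution to the interior of a forward light cone, which I foliate by the hyperboloids $\mathcal{H}_s = \{t^2 - \abs{x}^2 = s^2,\ t>0\}$, $s \ge 2$; on the support, $\mathcal{H}_s$ meets $\{\abs{x} \lesssim s^2\}$. I commute \eqref{eq:WMmain} with strings $\Gamma^I$, $\abs{I}\le k$, built from the translations $\partial_\mu$, the Lorentz boosts $\Omega_{0i} = x^i\partial_t + t\partial_i$ and the rotation $\Omega_{12} = x^1\partial_2 - x^2\partial_1$, all of which either commute with $\Box_\eta$ or are Killing; since each $\Gamma$ carries the null form $\eta(\D*u,\D*v)$ into a finite sum of null forms $\eta(\D*\Gamma^J u, \D*\Gamma^K v)$, every commuted equation keeps the schematic shape
\[
	\Box_\eta \Gamma^I\phi \;=\; \sum_{\abs{J}+\abs{K}+\abs{L}\le\abs{I}} c_{JKL}\, \Gamma^L\phi\cdot\eta(\D*\Gamma^J\phi,\D*\Gamma^K\phi)
\]
— a bare factor times a null form — at every order. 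I then set $E_k(s) = \sum_{\abs{I}\le k}\int_{\mathcal{H}_s}\big[\, \abs{(s/t)\,\partial\Gamma^I\phi}^2 + \textstyle\sum_a\abs{\bar\partial_a\Gamma^I\phi}^2 \,\big]\,\D{x}$, with $\bar\partial_a = \partial_a + (x^a/t)\partial_t$ the derivatives tangent to the foliation; one has $E_k(2)\lesssim (\norm[H^{k+1}]{\phi_0}+\norm[H^k]{\phi_1})^2 \le C\epsilon^2$, and the bootstrap hypothesis is that $E_k(s)$ stays below a fixed multiple of $\epsilon^2(1+\log s)^{N}$ for suitable $N$.

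Contracting the commuted equations with the future unit normal of $\mathcal{H}_s$ and integrating between $\mathcal{H}_2$ and $\mathcal{H}_s$ gives
\[
	E_k(s)^{1/2} \;\lesssim\; E_k(2)^{1/2} + \int_2^s \Big\| \sum_{\abs{J}+\abs{K}+\abs{L}\le k}\Gamma^L\phi\cdot\eta(\D*\Gamma^J\phi,\D*\Gamma^K\phi)\Big\|_{L^2(\mathcal{H}_\tau)}\,\D{\tau},
\]
and I bound the integrand with two pointwise consequences of the energy. The first is the hyperboloidal Klainerman--Sobolev inequality, which gives $\abs{\partial\Gamma^I\phi}\lesssim s^{-1}E_{\abs{I}+2}(s)^{1/2}$ and $\abs{\bar\partial\Gamma^I\phi}\lesssim t^{-1}E_{\abs{I}+2}(s)^{1/2}$ for $\abs{I}\le k-2$, reproducing the expected $\langle t\rangle^{-1/2}$ decay of $\partial\phi$ in $d=2$. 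The second — the crucial new input — is a logarithmically weighted Hardy inequality on the hyperboloids (the first of the paper's two new linear estimates), controlling a weighted $L^2(\mathcal{H}_s)$ norm of $\Gamma^I\phi$ itself by $E_{\abs{I}}(s)^{1/2}$ at the cost of a factor $\log(2+s)$; combined with the hyperboloidal Sobolev embedding this yields $\abs{\Gamma^I\phi}\lesssim s^{-1}\log(2+s)\,E_{\abs{I}+3}(s)^{1/2}$ for $\abs{I}\le k-3$. The null condition makes the nonlinearity admissible: from $\abs{\eta(\D*u,\D*v)}\lesssim (\abs{\bar\partial u}+\abs{\bar\partial v})(\abs{\partial u}+\abs{\partial v})$ one sees that in each summand the highest-order factor enters carrying a tangential derivative $\bar\partial$ or the weighted derivative $(s/t)\partial$, both controlled in $L^2(\mathcal{H}_\tau)$ by $E_k(\tau)^{1/2}$, while the lower-order factors — of order $\le k-3$ — are taken in $L^\infty$ with the rates above.

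Feeding these in, and using $\mathrm{supp}\,\phi\cap\mathcal{H}_\tau\subset\{\abs{x}\lesssim\tau^2\}$, every summand with at most one top-order factor contributes an integrand $\lesssim \epsilon^3\,\tau^{-2}(\log\tau)^{O(1)}$, integrable in $\tau$. The delicate summands are those with \emph{all} $k$ derivatives on a single undifferentiated factor, schematically $\Gamma^k\phi\cdot\eta(\D*\phi,\D*\phi)$: there $\Gamma^k\phi$ can only be placed in $L^2(\mathcal{H}_\tau)$ through the logarithmic Hardy inequality, which leaves an integrand $\lesssim \epsilon^2\,\tau^{-1}(\log\tau)\,E_k(\tau)^{1/2}$ and hence a logarithmically growing top-order energy under a naive treatment. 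This is exactly where the point-wise decay estimate for the twisted derivative associated to the Morawetz $K$ multiplier — the paper's second new linear estimate, whose decay cannot be recovered from the commutator fields — enters: rewriting the second derivatives in the offending term through the twisted derivative removes the logarithm and closes the bootstrap with $E_k(s)\lesssim\epsilon^2$ uniformly (for the compactly supported data at hand one could alternatively tolerate the mild $(\log s)^{O(1)}$ growth, which still suffices below). With $E_k$ controlled, the pointwise estimates give $\norm[L^\infty(\Real^2)]{\phi(t,\cdot)}\lesssim \epsilon\, t^{-1/2}\log(2+t)\to 0$; finally a standard local existence and continuation statement for semilinear wave equations in $H^{k+1}\times H^k$, together with the comparison on $\{t=\mathrm{const}\}$ between ordinary Sobolev norms and the hyperboloidal energies (again controlled by the support property), upgrades the a priori bounds to a genuine global solution with the asserted decay.

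The main obstacle is not the existence mechanism, which is routine once the estimates hold, but the two borderline estimates peculiar to $d=2$: controlling $\phi$ at all — forced by the bare factor of $\phi$ in \eqref{eq:WMmain}, and unavailable to the classical method because the $d=2$ Morawetz energy is not coercive on $\norm[L^2]{\phi}$ — which requires the logarithmically weighted Hardy inequality and introduces a logarithm that must be kept from compounding; and the top-order term above, where that same borderline decay of $\phi$ threatens a runaway logarithmic energy growth that the twisted-derivative estimate is built to defeat. Tracking the precise powers of $s/t$ through the commutations, the null decomposition, the Hardy inequality and the Sobolev embeddings, and arranging the order hierarchy so that no logarithm survives where it would be fatal, is where the real work lies.
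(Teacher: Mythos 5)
Your framework (hyperboloidal foliation, compact support giving $\rho\le\ln\tau$ on the support, commuting with boosts or the full Poincar\'e set, a log-weighted Hardy inequality, and a final continuity argument) matches the paper's, but there is a genuine gap at its center: you only propagate the $\partial_t$-type hyperboloidal energy $E_k(s)$ (the paper's $\mathcal{E}_\tau$, cf.\ \eqref{eq:tenergy}), while both of the "new linear estimates" you then invoke are in fact consequences of the conformal Morawetz energy $\mathcal{F}_\tau$ built from the multiplier $K\phi+t\phi$, cf.\ \eqref{eq:Kenergy}, which never enters your bootstrap. Concretely, Lemma \ref{lem:hardy:2} bounds $\int_{\Sigma_\tau}\phi^2\cosh(\rho)^{-1}\,\mathrm{dvol}$ by $\int_{\Sigma_\tau}(1+\rho^2)\,|L\phi|^2\cosh(\rho)^{-1}\,\mathrm{dvol}$; your energy controls only the \emph{weighted} quantity $\tau^{-2}\int|L\phi|^2\cosh(\rho)^{-1}\,\mathrm{dvol}$ (equivalently $\int|\bar\partial\phi|^2\,\mathrm{d}x$), so even with $\rho\lesssim\ln\tau$ you obtain at best $|\Gamma^I\phi|\lesssim\log s\,E^{1/2}$ --- no decay at all --- rather than your claimed $|\Gamma^I\phi|\lesssim s^{-1}\log(2+s)\,E^{1/2}$. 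The unweighted control of $|L\phi|^2\cosh(\rho)^{-1}$ is precisely what $\mathcal{F}_\tau$ supplies (this is why the paper states the $d=2$ decay of $\phi$ itself only from the $K$-energy). Likewise, the pointwise bound \eqref{eq:Kptwbd} for the twisted derivative is a statement about solutions whose $K$-energies (after commutation, via \eqref{eq:Kcommu}) are bounded; for the nonlinear solution it cannot be cited as a black box but must be re-derived inside the bootstrap by propagating $\mathcal{F}_\tau[L^\alpha\phi]$, which your scheme does not do.

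Adding the second energy is not a cosmetic fix, because its error integral \eqref{eq:FenergyEst} carries the much heavier multiplier weight ($K\phi+t\phi\sim t\tau$ rather than $\partial_t\phi$), and the resulting bulk terms decay only like $(\ln\tau)^2\tau^{-2}$ instead of $\tau^{-3}$. Closing them is where the paper's real work lies: the null form is expanded as in \eqref{eq:bilinearest} in terms of $L$, $\partial_t$, the twisted derivative $(K+t)\psi$ and the bare factor $\psi$ --- not merely as good$\times$bad derivatives --- and the borderline case $|\alpha_1|=|\alpha|$ is handled by Lemma \ref{lem:borderline}, placing the top-order factor in the $\mathcal{F}$-controlled $L^2$ norm and the lower-order $(K+t)\psi$ in $L^\infty$ via \eqref{eq:GSkp}; your decomposition $|\eta(\D*u,\D*v)|\lesssim(|\bar\partial u|+|\bar\partial v|)(|\partial u|+|\partial v|)$ does not produce the weights needed to make the $\mathcal{F}$-error integrable. (A minor point by comparison: allowing $(\log s)^N$ growth of the top-order energy is indeed tolerable for this compactly supported theorem, and the paper in fact closes with uniformly bounded $\mathcal{E}$ and $\mathcal{F}$; the twisted-derivative estimate's role in avoiding the log-loss becomes essential only for the perturbation and non-compact-data extensions.)
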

We will state and prove a more precise version of this theorem, including
the boundedness of weighted energies as well as the pointwise peeling estimates, in Section \ref{sect:2dWM}.
The above argument uses crucially that the initial data has bounded support.
For data that has unbounded support (but sufficient decay near infinity in a Sobolev sense), this can be overcome via an approximation procedure taking advantage of the scaling invariance of the wave-map equation. 
We discuss this in Section \ref{sect:noncompact}.
Crucial to this argument is a \emph{nonlinear stability} result for large-data ``dispersive'' solutions to the wave-map equation, in the spirit of \cite{Sideri1989}, which may be of independent interest and which we discuss in Section \ref{sect:largeDpert}. 

In closing, we note that the $d \geq 3$ versions of the analytic estimates presented below have prior analogues due to LeFloch and Ma \cite{LefMa2015}, and our main focus is in deriving the theory for $d = 2$. We include a discussion of the higher dimensional results to highlight the differences, as well as to showcase the minor notational advantage in our system which allows us to reduce the set of commutators used for analyzing the nonlinear problems. 

\subsection*{Acknowledgements}

The author would like to thank Pin Yu and Shiwu Yang for useful comments. 
Part of this research was conducted when the author was a visiting scholar at The Institute of Mathematical Sciences at The Chinese University of Hong Kong. The author would also like to express his gratitude to Zhouping Xin, Po Lam Yung, and The IMS for their hospitality during his visit.

\section{Hyperboloidal Global Sobolev Inequalities}

We begin by offering a streamlined proof of a family of weighted global Sobolev inequalities adapted to hyperboloids in Minkowski space.
Inequalities of this type was first introduced by Klainerman \cite{Klaine1985a,Klaine1993} for the study of the decay of the Klein-Gordon equation, and has been adapted by LeFloch and Ma \cite{LefMa2015} for treatment of coupled Klein-Gordon--Wave systems. 
The main novelty in this section of our work is the geometric formulation of our main inequalities as \emph{weighted Sobolev inequalities} adapted to the Lorentz boosts, in the sense that standard Sobolev inequalities on Euclidean domains are adapted to coordinate partial derivatives. 
In applications this means that \emph{the only commutator vector fields we will use are Lorentz boosts}, and in particular we can omit commutations with spatial rotations which are used in arguments based on Klainerman's original work. 

We restrict our attention to the interior of the future light-cone in Minkowski space, that is, the set $\{t > \abs{x}\}\subset \Real^{1,d}$. (We use $x^1, \ldots, x^d$ for the standard coordinates on $\Real^d$, and use $x^0$ or $t$ for the time coordinate.) 
Let
\begin{equation}
	\tau \eqdef \sqrt{t^2 - \abs{x}^2}
\end{equation}
and denote by $\Sigma_\tau$ its level sets. The $\Sigma_\tau$ are hyperboloids that asymptote to the forward light-cone centered at the origin.

Denote by $L^i$, $i\in \{1, \ldots,d\}$ the \emph{Lorentz boost} vector fields 
\begin{equation}
	L^i \eqdef t \partial_{x^i} + x^i \partial_t.
\end{equation}
By definition $L^i$ are tangent to the hypersurfaces $\Sigma_\tau$. The set $\{L^i\}$ is linearly independent and span the tangent space of $\Sigma_\tau$. We will also mention the rotational vector fields
\begin{equation}
	\Omega_{ij} \eqdef x^i \partial_{x^j} - x^j \partial_{x^i}.
\end{equation}
The $\Omega_{ij}$ are also tangent to $\Sigma_\tau$, and they admit the decomposition
\begin{equation}\label{eq:omegadecomp}
	\Omega_{ij} = \frac{x^i}{t} L^j - \frac{x^j}{t} L^i.
\end{equation}

The boosts and rotations form an algebra under commutation:
\begin{equation}\label{eq:algebra}
	\begin{gathered}
		[L^i, L^j] = \Omega_{ij}\\
		[\Omega_{ij},\Omega_{jk}] = \Omega_{ik}\\
		[L^i, \Omega_{ij}] = L^j
	\end{gathered}
\end{equation}

We can define a system of radial coordinates on $\{t > \abs{x}\}\setminus \{x = 0\}$. Let
\begin{equation}
	(\tau,\rho,\theta) \in \Real_+ \times \Real_+ \times \Sphere^{d-1}
\end{equation}
(where we identify $\Sphere^{d-1}$ canonically as the unit sphere in $\Real^d$) be the coordinate of the point
\begin{equation}\begin{gathered}
	t = \tau \cosh(\rho), \\
	x = \tau \sinh(\rho)\cdot \theta.
\end{gathered}\end{equation}
The Minkowski metric takes the warped product form 
\begin{equation}\label{eq:metricNew}
	\eta = - \D*{t}^2 + \sum_{i = 1}^d (\D*{x^i})^2 = - \D*{\tau}^2 + \tau^2 \D\rho^2 + \tau^2 \sinh(\rho)^2 \D\theta^2
\end{equation}
were by $\D*\theta^2$ we refer to the standard metric on $\Sphere^{d-1}$. Then clearly the induced Riemannian metrics on $\Sigma_{\tau}$, which we denote by $h_\tau$, have the coordinate expressions relative to the $(\rho,\theta)$ coordinates
\begin{equation}
	\begin{gathered}
		h_\tau = \tau^2 ( \D*\rho^2 + \sinh(\rho)^2 \D\theta^2) \\
		(h_\tau)^{-1} = \frac{1}{\tau^2} \big(\partial_\rho \otimes \partial_\rho + \frac{1}{\sinh(\rho)^2} \partial_\theta\otimes \partial_\theta\big)
	\end{gathered}
\end{equation}
where $\partial_\theta\otimes\partial_\theta$ is the inverse standard metric on $\Sphere^{d-1}$. 

One can check the identity 
\begin{equation}\label{eq:Lid1}
	\sum_{i = 1}^d L^i \otimes L^i = \partial_\rho \otimes \partial_\rho + \frac{\cosh(\rho)^2}{\sinh(\rho)^2} \partial_\theta\otimes\partial_\theta
\end{equation}
which implies that 
\begin{equation}\label{eq:Lid2}
	(\tau^{-2} h_\tau)^{-1} + \sum_{i < j} \Omega_{ij} \otimes \Omega_{ij} = \sum_{i = 1}^d L^i \otimes L^i
\end{equation}
as tensor fields along $\Sigma_\tau$. Note that $(\Sigma_\tau,\tau^{-2} h_{\tau})$ is isometric to the standard hyperbolic space $(\mathbb{H}^d,h)$. A particular consequence is the following coercivity property:
\begin{lem}\label{lem:Lcoercivity}
	Let $f$ be a smooth function on $\Sigma_\tau$. Then 
	\[  (h_{\tau}^{-1})(\D*f,\D*f) \leq \tau^{-2} \sum_{i = 1}^d \abs{L^i f}^2 .\]
\end{lem}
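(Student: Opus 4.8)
The plan is to read the inequality off directly from the tensorial identity \eqref{eq:Lid2}. Since $f$ is a function on $\Sigma_\tau$ and both the boosts $L^i$ and the rotations $\Omega_{ij}$ are tangent to $\Sigma_\tau$, the quantities $L^i f$ and $\Omega_{ij} f$ are intrinsically defined, and contracting the vector-field identity \eqref{eq:Lid2} against the one-form $\D*f$ in each of its two slots yields
\[
	(\tau^{-2} h_\tau)^{-1}(\D*f, \D*f) + \sum_{i < j} \abs{\Omega_{ij} f}^2 = \sum_{i = 1}^d \abs{L^i f}^2 .
\]
First I would discard the manifestly non-negative rotational terms $\sum_{i<j}\abs{\Omega_{ij}f}^2$ on the left, which gives $(\tau^{-2}h_\tau)^{-1}(\D*f, \D*f) \leq \sum_{i=1}^d \abs{L^i f}^2$. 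Then, observing that on a fixed slice $\tau$ is constant, so that rescaling $h_\tau$ by the positive scalar $\tau^{-2}$ rescales its inverse by $\tau^2$, i.e. $(\tau^{-2}h_\tau)^{-1} = \tau^2\, h_\tau^{-1}$, dividing through by $\tau^2$ produces exactly the claimed bound $h_\tau^{-1}(\D*f, \D*f) \leq \tau^{-2}\sum_{i=1}^d \abs{L^i f}^2$. (Equivalently, one can bypass the identification of $(\Sigma_\tau,\tau^{-2}h_\tau)$ with $\mathbb{H}^d$ and argue straight from \eqref{eq:Lid1} and the coordinate form of $h_\tau^{-1}$, using $\cosh(\rho)^2 \geq 1$ to compare the angular components; the two routes are the same computation.)

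There is essentially no obstacle here: all of the substantive work has already been done in establishing \eqref{eq:Lid1}–\eqref{eq:Lid2}, and what remains is a one-line contraction together with a positivity observation. The only point meriting a word of care is that the radial coordinates $(\tau,\rho,\theta)$ underlying the derivation of \eqref{eq:Lid2} degenerate on the axis $\{x = 0\}$. One checks that at such points $\Omega_{ij} = 0$ and $L^i = t\,\partial_{x^i}$, with the induced metric on $\Sigma_\tau$ being Euclidean there, so that \eqref{eq:Lid2} — an identity between globally smooth tensor fields on $\Sigma_\tau$ — persists across the axis by continuity. Hence the pointwise inequality of the lemma holds on all of $\Sigma_\tau$.
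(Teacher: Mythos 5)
Your argument is correct and is precisely the paper's intended one: Lemma \ref{lem:Lcoercivity} is stated there as ``a particular consequence'' of the identity \eqref{eq:Lid2}, i.e.\ contracting with $\D*f$ in both slots, discarding the non-negative rotational terms, and using $(\tau^{-2}h_\tau)^{-1}=\tau^2 h_\tau^{-1}$. Your extra remark about smoothness across the axis $\{x=0\}$ is a harmless (and correct) refinement of the same computation.
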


\begin{conv}
	Throughout the notation $\partial_t$ denotes the vector field associated to the $\partial_t$ partial differential in the standard coordinates of $\Real^{1,d}$; similarly $\partial_\tau$ and $\partial_\rho$ the vector fields associated to the corresponding partial derivatives in the $(\tau, \rho, \theta)$ coordinate system. 
\end{conv}

\begin{rmk}
	Observe that $\partial_\tau$ and $\partial_\rho$ have the decompositions
	\begin{gather*}
		\partial_\tau = \frac{1}{\tau} (t \partial_t + \sum_{i = 1}^d x^i \partial_{x^i}); \\
		\partial_\rho = t \partial_r + r \partial_t = \sum_{i = 1}^d \frac{x^i}{r} L^i.
	\end{gather*}
\end{rmk}

The global Sobolev inequalities that we use will be derived from the following standard Sobolev inequalities which we state without proof. For convenience we introduce the notation
\begin{equation}
	\sobd(d) \eqdef \lfloor \frac{d}{2} \rfloor + 1
\end{equation}
for the minimum integer satisfying $H^{\sobd(d)} \hookrightarrow L^\infty$. 
\begin{prop}[Standard Sobolev inequalities] \label{prop:StdSob}
	Below the symbol $\covD$ stands for the Levi-Civita connection on the corresponding Riemannian manifolds. 
	\begin{enumerate}
		\item Let $f$ be a function on the standard hyperbolic space $(\mathbb{H}^d,h)$, which we represent in polar coordinates $(\rho,\theta) \in \Real_+\times \Sphere^{d-1}$ as above.  Then
		\[ \sup_{\rho < \frac{5}{3}} \abs{f (\rho,\theta)}^2 \lesssim \sum_{k \leq \sobd(d)} \int_0^2 \int_{\Sphere^{d-1}} \abs[h]{ \covD^k f}^2 \sinh(\rho)^{d-1}  \D\theta \D\rho.\]
	\item Let $f$ be a function on the cylinder $\Real_+ \times \Sphere^{d-1}$ with the product metric. Then 
		\[ \sup_{\rho > \frac{4}{3}} \abs{ f(\rho,\theta)}^2 \lesssim \sum_{k \leq \sobd(d)} \int_1^\infty \int_{\Sphere^{d-1}} \abs{\covD^k f}^2 \D\theta \D\rho.\]
	\end{enumerate} 
\end{prop}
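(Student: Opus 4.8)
The plan is to deduce both estimates from the flat Sobolev embedding $H^{\sobd(d)}(U)\hookrightarrow L^\infty(U)$ on a fixed bounded Euclidean domain $U\subset\Real^d$, which is available precisely because $\sobd(d)=\lfloor d/2\rfloor+1>d/2$. In each case the underlying Riemannian manifold, restricted to the region of interest, admits a fixed finite atlas of coordinate charts in which the metric and its inverse, together with the coordinate derivatives of the metric coefficients up to order $\sobd(d)-1$, are uniformly comparable to the Euclidean ones; this makes $\abs[h]{\covD^k f}$ comparable to the Euclidean $k$-th order derivatives of $f$ for $k\le\sobd(d)$, and the Riemannian volume element comparable to Lebesgue measure. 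A cutoff in $\rho$ then localizes the statement and absorbs the boundary of the region.

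For part (1): choose a smooth $\chi=\chi(\rho)$ with $\chi\equiv 1$ on $\{\rho\le 5/3\}$ and $\chi\equiv 0$ on $\{\rho\ge 2\}$, and set $g=\chi f$, extended by zero to all of $\mathbb{H}^d$. The geodesic ball $\{\rho<2\}$ is a fixed relatively compact subset of the \emph{smooth} manifold $(\mathbb{H}^d,h)$; in particular the apparent singularity of the polar coordinates at $\rho=0$ is irrelevant, since in geodesic normal (or ball-model) coordinates $h_{ij}$ is smooth with $h_{ij}(0)=\delta_{ij}$. Hence the comparability above holds on $\overline{\{\rho<2\}}$, the hyperbolic volume form $\sinh(\rho)^{d-1}\,\D\theta\,\D\rho$ is comparable to Lebesgue measure there, and applying the flat embedding to $g$ on a fixed Euclidean ball containing the chart image, then restricting to $\{\rho<5/3\}$ where $g=f$, gives $\sup_{\rho<5/3}\abs{f}^2\lesssim\sum_{k\le\sobd(d)}\int\abs[h]{\covD^k g}^2$ against the hyperbolic volume. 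Expanding $\covD^k(\chi f)$ by the product rule — each derivative falling on $\chi$ costs only a universal constant and is supported in $\{\rho<2\}$ — yields the claim.

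For part (2) the argument is the same but simpler: the cylinder $\Real_+\times\Sphere^{d-1}$ carries a product metric that is \emph{translation invariant} in $\rho$, so it is covered by unit coordinate cubes forming a $\rho$-translation-invariant family, and the flat embedding holds on it with a constant independent of the translation. Taking $\chi\equiv 1$ on $\{\rho\ge 4/3\}$ and $\chi\equiv 0$ on $\{\rho\le 1\}$, setting $g=\chi f$ extended by zero to $\Real\times\Sphere^{d-1}$, applying the embedding, and expanding $\covD^k(\chi f)$ as before gives the bound. The only point demanding any care — and the reason these inequalities transplant with $\tau$-independent constants once carried back to $\Sigma_\tau$ via the isometry $(\Sigma_\tau,\tau^{-2}h_\tau)\cong(\mathbb{H}^d,h)$ and the warped-product form \eqref{eq:metricNew} — is exactly this translation/dilation invariance of the model geometries together with the fact that the cutoffs can be fixed once and for all; there is no substantive analytic obstacle.
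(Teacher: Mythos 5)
The paper states Proposition~\ref{prop:StdSob} explicitly ``without proof,'' as a standard fact, so there is no argument of the author's to compare against. Your proof---localizing with a fixed cutoff in $\rho$ and reducing to the flat embedding $H^{\sobd(d)}\hookrightarrow L^\infty$ on charts where the metric, its inverse, and finitely many derivatives of the metric coefficients are uniformly controlled (normal coordinates handling the coordinate degeneracy at $\rho=0$ on $\mathbb{H}^d$ in part (1), and the $\rho$-translation-invariant unit charts of the product cylinder in part (2), with the product rule absorbing the cutoff terms into the stated integration ranges $\rho\in(0,2)$ and $\rho\in(1,\infty)$)---is correct, with constants depending only on $d$, and is precisely the standard argument the author is implicitly invoking.
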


For convenience\footnote{We cannot use the standard multi-index notation, whose definition uses the fact that partial derivatives commute. Similarly, we cannot use the natural generalization of the multi-index notation as usually used in discussion of the vector field method, since there the set of first order operators used generate a Lie algebra (are closed under commutations). As we saw in \eqref{eq:algebra}, the Lorentz boost vector fields themselves are not closed under commutation.}, define the index set $\mathfrak{I}^m = \{1, \ldots, d\}^m$, and set
\[ \mathfrak{I}^{\leq m} \eqdef \bigcup_{0 \leq m' \leq m} \mathfrak{I}^{m'}; \quad \mathfrak{I} \eqdef \bigcup_{0 \leq m'} \mathfrak{I}^{m'}.\]
For $\alpha \in \mathfrak{I}$, its \emph{order} (denoted $\abs{\alpha}$), is the (unique) non-negative integer $m$ such that $\alpha \in \mathfrak{I}^m$. 
For $\alpha = (\alpha_1, \ldots, \alpha_m) \in \mathfrak{I}^m$, we denote by $L^\alpha$ the $m$\textsuperscript{th} order differential operator
\begin{equation}
	L^\alpha \phi \eqdef L^{\alpha_m} L^{\alpha_{m-1}} \cdots L^{\alpha_1} \phi.
\end{equation}
By convention, for the unique element $\alpha \in \mathfrak{I}^0$ we define the corresponding $L^\alpha \phi \eqdef \phi$. 

Putting everything together we have the following \emph{global} Sobolev inequality adapted to Lorentz boosts.
\begin{thm}[Global Sobolev Inequality] \label{thm:GlobSob}
	Let $\ell\in \Real$ be fixed. We have the following uniform estimate (the implicit constant depending on $d$ and $\ell$) for functions $f$ defined on the set $\{ t > \abs{x}\} \subset \Real^{1,d}$:
	\[ \abs{f(\tau,\rho,\theta)}^2 \lesssim \tau^{-d} \cosh(\rho)^{1-d-\ell} 
		\sum_{\alpha \in \mathfrak{I}^{\leq \sobd(d)}} \int_{\Sigma_\tau} \cosh(\rho)^\ell \abs{L^\alpha f}^2 ~ \underbrace{\tau^d \sinh(\rho)^{d-1} \D\theta \D\rho}_{\mathrm{dvol}_{\Sigma_\tau}}.
	\]
\end{thm}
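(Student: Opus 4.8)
The plan is to reduce everything, on each fixed hyperboloid $\Sigma_\tau$, to the two standard Sobolev inequalities of Proposition~\ref{prop:StdSob}. The key normalization is to work with the rescaled metric $\tau^{-2}h_\tau$: under it $(\Sigma_\tau,\tau^{-2}h_\tau)$ is isometric to the fixed hyperbolic space $(\mathbb{H}^d,h)$, and a direct computation (using $t=\tau\cosh\rho$, $x^i=\tau\sinh(\rho)\theta^i$) shows that the restrictions of the $L^i$ to $\Sigma_\tau$, written in the $(\rho,\theta)$ coordinates, are independent of $\tau$ (as is $\partial_\rho=\sum_i\tfrac{x^i}{r}L^i$). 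Consequently every constant produced below is automatically uniform in $\tau$, and the factor $\tau^{-d}$ in the conclusion enters solely through $\mathrm{dvol}_{\Sigma_\tau}=\tau^d\,\mathrm{dvol}_{\mathbb{H}^d}$. I would then split the range of $\rho$ into the regions $\rho<\tfrac53$ (near the axis) and $\rho>\tfrac43$ (near the light cone); these overlap and together exhaust $\{t>|x|\}$, and I would estimate a given evaluation point $(\tau,\rho_0,\theta_0)$ using whichever region contains it.

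For $\rho_0<\tfrac53$ I would apply Proposition~\ref{prop:StdSob}(1) directly. On $\{\rho<2\}$ the weights $\cosh(\rho)^{\ell}$ and $\cosh(\rho_0)^{1-d-\ell}$ are bounded above and below, so they cost only a multiplicative constant; it then suffices to dominate $\abs[h]{\covD^{k}f}$ for $k\le\sobd(d)$ by $\sum_{\abs{\alpha}\le k}\abs{L^{\alpha}f}$ on the relatively compact ball $\{\rho\le 2\}\subset\Sigma_\tau$. This holds because $\{L^i\}$ is a smooth frame whose Gram matrix relative to $\tau^{-2}h_\tau$ equals $\cosh(\rho)^2\delta_{ij}-\sinh(\rho)^2\theta^i\theta^j$, which is smooth and uniformly positive definite (its eigenvalues are $1$ and $\cosh(\rho)^2$); hence the connection coefficients of $\tau^{-2}h_\tau$ in this frame are bounded on $\{\rho\le 2\}$, and expanding $\covD^{k}$ in the frame yields iterated $L^i$'s plus lower order $L$-derivatives with bounded coefficients.

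For $\rho_0>\tfrac43$ I would first gauge away the volume and weight factors. Set $u\eqdef\cosh(\rho)^{(d-1+\ell)/2}f$; the exponent is chosen so that $\abs{u}^2\,\D\theta\,\D\rho$ is comparable on $\{\rho>1\}$ to $\cosh(\rho)^{\ell}\abs{f}^2\sinh(\rho)^{d-1}\,\D\theta\,\D\rho$ (using $\sinh\rho\simeq\cosh\rho$ there) and, simultaneously, so that $\abs{f(\rho_0,\theta_0)}^2=\cosh(\rho_0)^{1-d-\ell}\abs{u(\rho_0,\theta_0)}^2$. Applying Proposition~\ref{prop:StdSob}(2) to $u$ bounds $\abs{u(\rho_0,\theta_0)}^2$ by $\sum_{k\le\sobd(d)}\int_1^\infty\!\int_{\Sphere^{d-1}}\abs{\covD^{k}_{\mathrm{cyl}}u}^2\,\D\theta\,\D\rho$, and it remains to show $\abs{\covD^{k}_{\mathrm{cyl}}u}^2\lesssim\cosh(\rho)^{d-1+\ell}\sum_{\abs{\alpha}\le k}\abs{L^{\alpha}f}^2$ on $\{\rho>1\}$. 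Expanding $\covD^{k}_{\mathrm{cyl}}\big(\cosh(\rho)^{(d-1+\ell)/2}f\big)$ by the Leibniz rule, each $\partial_\rho$ that hits the weight yields a factor bounded on $\{\rho>1\}$ (a polynomial in $\tanh\rho$) times the weight, while the derivatives remaining on $f$ are iterated $\partial_\rho$'s and round-sphere covariant derivatives; via $\partial_\rho=\sum_i\tfrac{x^i}{r}L^i$, $\Omega_{ij}=\tfrac{x^i}{t}L^j-\tfrac{x^j}{t}L^i$ (see \eqref{eq:omegadecomp}) together with the commutation relations \eqref{eq:algebra}, these are bounded-coefficient combinations of $\{L^{\alpha}:\abs{\alpha}\le k\}$ — the coefficients $\tfrac{x^i}{r}=\theta^i$ and $\tfrac{x^i}{t}=\tanh(\rho)\theta^i$ having all derivatives bounded on $\Sigma_\tau\cap\{\rho>1\}$, and the commutators $[L^i,L^j]=\Omega_{ij}$ never raising the order. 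Combining the two regions, rewriting the $\D\theta\,\D\rho$ integrals as $\mathrm{dvol}_{\Sigma_\tau}$ integrals (which is where $\tau^{-d}$ appears) and enlarging the integration domain to all of $\Sigma_\tau$ then yields the stated inequality.

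The step I expect to be the main obstacle is the bookkeeping in the large-$\rho$ region: one must check that after all the Leibniz expansions and commutator substitutions no uncompensated factor of $\coth\rho$ and no surplus negative power of $\cosh\rho$ survives, and that the constants are independent of both $\tau$ and $(\rho_0,\theta_0)$. Identifying the gauge exponent $(d-1+\ell)/2$ — which is precisely what makes both the volume normalization and the $\cosh(\rho_0)^{1-d-\ell}$ prefactor come out correctly — is the one genuinely non-routine point; everything else is a careful but mechanical induction on the order.
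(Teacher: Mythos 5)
Your proposal is correct and follows essentially the same route as the paper: the same two-region split ($\rho<\tfrac53$ via the hyperbolic-space Sobolev inequality with the frame/connection-coefficient comparison on $\{\rho\le 2\}$, and $\rho>\tfrac43$ via the cylinder Sobolev inequality applied to a weighted function — your gauge $\cosh(\rho)^{(d-1+\ell)/2}f$ is comparable, with all derivatives, to the paper's $\cosh(\rho)^{\ell/2}\sinh(\rho)^{(d-1)/2}f$ on $\{\rho>1\}$), with the $\tau^{-d}$ arising from $\mathrm{dvol}_{\Sigma_\tau}=\tau^d\,\mathrm{dvol}_{\mathbb{H}^d}$ exactly as in the paper. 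The bookkeeping you flag (bounded derivatives of $\theta^i$ and $\tanh(\rho)\theta^i$ away from $\rho=0$, weight derivatives comparable to the weight) is precisely how the paper closes the large-$\rho$ case, so there is no gap.
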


\begin{proof}
	First we prove the estimate for $\rho < \frac53$: using that $(\Sigma_\tau , h_\tau)$ is conformal to $(\mathbb{H}^d, h)$ with $h_\tau = \tau^2 h$, we see that they have the same Levi-Civita connection. So the first part of \thref{prop:StdSob} implies 
	\[ \sup_{\rho < \frac53} \abs{f}^2 \lesssim \sum_{k \leq \sobd(d)} \int_{\Sigma_\tau \cap \{\rho < 2\}} \abs[\tau^{-2} h_\tau]{\covD^k f}^2 \mathrm{dvol}_{\tau^{-2} h_\tau}. \]
	Next, observe that for a fixed $(\rho,\theta)$, the higher derivative $\abs[\tau^{-2} h_\tau]{\covD^k L^i}$ is independent of $\tau$, and hence has universal bounds when $\rho < 2$. Observing that 
	\[ L^i \covD_a f = \covD_a L^i(f) - \covD_{\covD_a L^i} f \]
	we see after applying \thref{lem:Lcoercivity} repeatedly by induction that
	\[ \sum_{k \leq \sobd(d)} \abs[\tau^{-2}h_\tau]{\covD^k f}^2 \lesssim \sum_{\alpha\in \mathfrak{I}^{\leq \sobd(d)}} \abs{ L^\alpha f}^2 \]
	on the domain $\Sigma_\tau \cap \{\rho < 2\}$, for some universal constant depending only on the dimension $d$. Using that $\cosh(\rho)$ is bounded both above and below on the domain $\{\rho < 2\}$, we see our claim follows, with the $\tau^{-d}$ decay originating from $\tau^{-d} \mathrm{dvol}_{\Sigma_\tau} = \mathrm{dvol}_{\tau^{-2} h_\tau}$.

	Next we prove the estimate for $\rho > \frac43$: for this we apply the second part of \thref{prop:StdSob} to the function $f \cosh(\rho)^{\ell/2} \sinh(\rho)^{(d-1)/2}$. We note that when $\rho > 1$, both $\cosh(\rho)$ and $\sinh(\rho)$ are uniformly comparable to $e^\rho$, and so we have $(\partial_\rho)^j \cosh(\rho)^{\ell/2} \sinh(\rho)^{(d-1)/2} \approx \cosh(\rho)^{\ell/2} \sinh(\rho)^{(d-1)/2}$. Hence immediately we get
	\begin{equation}
		\sup_{\rho > \frac43}\big[ \abs{f}^2 \cosh(\rho)^\ell \sinh(\rho)^{d-1}\big] \lesssim
		\sum_{k \leq \sobd(d)} \int_1^\infty \cosh(\rho)^\ell \abs{\covD^k f}^2 \sinh(\rho)^{d-1} \D\theta \D\rho;
	\end{equation}
	we remark that the Levi-Civita connection and the norm are both taken with respect to the product metric on $\Real_+ \times \Sphere^{d-1}$. 
	Note that the inverse product metric has the decompsition 
	\[ \partial_\rho \otimes \partial_\rho + \sum_{i < j} \Omega_{ij} \otimes \Omega_{ij} \]
	and by \eqref{eq:omegadecomp} we have $\Omega \approx \frac{x}{t} L$, and we know that $\partial_\rho \approx \frac{x}{r} L$. Since the functions $x/r = \theta$ and $x/t = \theta \tanh(\rho)$ have bounded derivatives along $\Sigma_\tau$ (with respect to the product metric) to all orders away from $\rho = 0$, arguing as above we conclude that the quantity with respect to the product metric
	\[ \sum_{k \leq \sobd(d)} \abs{\covD^k f}^2 \lesssim \sum_{\alpha \in \mathfrak{I}^{\leq \sobd(d)}} \abs{L^i f}^2.\]
	And thus our claim follows. 
\end{proof}

\section{Decay of linear waves: $\partial_t$-energy}

In this section we apply \thref{thm:GlobSob} to get point-wise decay for the derivatives $\partial\phi$ of solutions to the linear wave equation $\Box \phi = 0$ on $\Real^{1,d}$. 
The main contribution of this paper is the explicit geometric decomposition \eqref{eq:tenergy} of the energy density of the linear wave equation along a hyperboloid. 
This makes explicit the fact that we see improved decay in ``good directions'' already at the level of first derivatives, and is compatible with our formulation of the global Sobolev inequality in Theorem \ref{thm:GlobSob}.

Let $\phi:\Real^{1,d} \to \Real$, define the standard energy quantity
\begin{equation}
	\big(\mathcal{E}_0[\phi]\big)^2 \eqdef \int_{\{0\} \times\Real^d} \abs{\partial_t\phi}^2 + \abs{\covD \phi}^2 \D{x}.
\end{equation}

\begin{prop} \label{prop:energy:derdecay}
	Let $\phi$ be a solution to $\Box\phi = 0$. Then we have the estimate in the region $\{t > \abs{x}\}$
	\begin{gather*}
		\abs{L^i \phi} \lesssim \underbrace{\tau^{1 - \frac{d}2} \cosh(\rho)^{1 - \frac{d}{2}}}_{t^{1-\frac{d}2}} \sum_{\alpha\in \mathfrak{I}^{\leq \sobd(d)}} \mathcal{E}_0[L^\alpha \phi], \\
		\abs{\partial_t \phi} \lesssim \underbrace{\tau^{-\frac{d}2} \cosh(\rho)^{1 - \frac{d}{2}}}_{t^{1 - \frac{d}{2}} (t+r)^{-1/2} (t-r)^{-1/2}} \sum_{\alpha\in \mathfrak{I}^{\leq \sobd(d)}} \mathcal{E}_0[L^\alpha \phi]. 
	\end{gather*}
\end{prop}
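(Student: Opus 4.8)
The plan is to feed a hyperboloidal energy estimate into the global Sobolev inequality of \thref{thm:GlobSob}, used with the weight $\ell=-1$. Two preliminary facts: since the Lorentz boosts are Killing, $[\Box,L^i]=0$, so $L^\alpha\phi$ solves $\Box(L^\alpha\phi)=0$ for every $\alpha\in\mathfrak{I}$, and likewise each $\partial_\mu\phi$, $\mu\in\{0,\dots,d\}$, is a solution. For a solution $\psi$ set $T[\psi]_{ab}=\partial_a\psi\,\partial_b\psi-\tfrac12\eta_{ab}\,\eta(\D*\psi,\D*\psi)$; the current $T[\psi](\partial_t,\cdot)$ is divergence free and, by the dominant energy condition, future-causal. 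Applying the divergence theorem on $\mathcal{D}_\tau\eqdef\{t\ge 0,\ t^2-\abs{x}^2\le\tau^2\}$, whose boundary consists of the two spacelike hypersurfaces $\{t=0\}$ (contributing $\tfrac12(\mathcal{E}_0[\psi])^2$) and $\Sigma_\tau$ (with future unit normal $\partial_\tau$), yields the hyperboloidal energy inequality
\[ \int_{\Sigma_\tau} T[\psi](\partial_t,\partial_\tau)\,\mathrm{dvol}_{\Sigma_\tau}\;\lesssim\;\big(\mathcal{E}_0[\psi]\big)^2 ; \]
the only subtlety is to discard the flux escaping through spatial infinity, which is nonnegative by causality, hence absent for compactly supported data by finite speed of propagation, the general finite-energy case following by density.

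The heart of the matter is an explicit decomposition of the flux density. Using $\partial_t=\cosh(\rho)\,\partial_\tau-\tau^{-1}\sinh(\rho)\,\partial_\rho$ one computes directly
\[ T[\psi](\partial_t,\partial_\tau)=\tfrac12\cosh(\rho)\Big[(\partial_\tau\psi)^2+(h_\tau^{-1})(\D*\psi,\D*\psi)\Big]-\tau^{-1}\sinh(\rho)\,(\partial_\tau\psi)(\partial_\rho\psi). \]
The $2\times2$ quadratic form in $(\partial_\tau\psi,\tau^{-1}\partial_\rho\psi)$ implicit here has eigenvalues $\tfrac12 e^{\pm\rho}$, the small one $\tfrac12 e^{-\rho}$ attached to the ``bad'' combination $\partial_\tau\psi+\tau^{-1}\partial_\rho\psi=e^\rho(\partial_t+\partial_r)\psi$, i.e.\ the outgoing null derivative. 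Since $e^{\pm\rho}\approx\cosh(\rho)^{\pm1}$ on $\{\rho\ge0\}$ and the angular part of $(h_\tau^{-1})(\D*\psi,\D*\psi)$ enters with the favorable weight $\cosh(\rho)$, this identity yields two pointwise consequences valid on $\Sigma_\tau$ for any $\psi$:
\[ \cosh(\rho)^{-1}\abs{\partial_\mu\psi}^2\lesssim T[\psi](\partial_t,\partial_\tau)\quad(\mu=0,\dots,d),\qquad \cosh(\rho)^{-1}\sum_{i=1}^d\abs{L^i\psi}^2\lesssim\tau^2\,T[\psi](\partial_t,\partial_\tau). \]
The second follows by expanding $\sum_i L^i\otimes L^i$ via \eqref{eq:Lid1} and bounding its radial term by the degenerate eigenvalue bound and its angular term by the good one; the first reduces, via $\partial_i\psi=t^{-1}(L^i\psi-x^i\partial_t\psi)$ and the second estimate, to the case $\mu=0$, which is the eigenvalue bound applied to $\partial_t\psi=\cosh(\rho)\,\partial_\tau\psi-\tau^{-1}\sinh(\rho)\,\partial_\rho\psi$.

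To conclude: for $\abs{L^i\phi}$ apply \thref{thm:GlobSob} with $\ell=-1$ to $f=L^i\phi$; extracting the outermost boost writes each $L^\alpha L^i\phi$ ($\abs\alpha\le\sobd(d)$) as $L^k\psi$ with $\psi=L^{\tilde\alpha}\phi$, $\abs{\tilde\alpha}\le\sobd(d)$, again a solution, and the second pointwise consequence followed by the energy inequality controls $\int_{\Sigma_\tau}\cosh(\rho)^{-1}\abs{L^\alpha L^i\phi}^2\,\mathrm{dvol}_{\Sigma_\tau}$ by $\tau^2(\mathcal{E}_0[L^{\tilde\alpha}\phi])^2$; collecting the factors $\tau^{-d}\cosh(\rho)^{2-d}\cdot\tau^2$ gives the stated $\tau^{1-d/2}\cosh(\rho)^{1-d/2}=t^{1-d/2}$ bound. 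For $\abs{\partial_t\phi}$ apply \thref{thm:GlobSob} with $\ell=-1$ to $f=\partial_t\phi$ and commute $\partial_t$ outward: $L^\alpha\partial_t\phi$ is a finite constant-coefficient combination of terms $\partial_\mu L^\beta\phi$ with $\abs\beta\le\abs\alpha$ (only the commutators $[L^i,\partial_\nu]$ arise, each a coordinate derivative, so no rotation vector fields are produced), after which the first pointwise consequence with $\psi=L^\beta\phi$ and the energy inequality give $\tau^{-d}\cosh(\rho)^{2-d}\big(\sum_\alpha\mathcal{E}_0[L^\alpha\phi]\big)^2$, i.e.\ the claimed $\tau^{-d/2}\cosh(\rho)^{1-d/2}$ bound.

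The step I expect to be most delicate is the flux identity and its sign-indefinite cross-term: one must recognize that the outgoing null derivative lies in the eigendirection carrying the \emph{small} eigenvalue $\tfrac12 e^{-\rho}$, so that the flux controls it only through a degenerate weight. This is exactly what forces $\ell=-1$ as the natural (essentially unique) choice in the Sobolev step, and is also why $\partial_t\phi$ decays one power of $\cosh(\rho)$ — equivalently $(t-r)^{-1/2}(t+r)^{-1/2}$ rather than $t^{-1}$ — more slowly near the light cone than a naive count predicts. The commutator bookkeeping for $\partial_t\phi$ and the justification of the vanishing boundary flux at spatial infinity are then routine.
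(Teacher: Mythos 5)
Your proposal is correct and follows essentially the same route as the paper: the $\partial_t$-multiplier flux through $\Sigma_\tau$, the decomposition of $Q(\partial_t,\partial_\tau)$ in the $(\tau,\rho,\theta)$ frame (your diagonalization with eigenvalues $\tfrac12 e^{\pm\rho}$ is the paper's completing-the-square identity \eqref{eq:tenergy} in different clothing), commutation with boosts together with the commutators $[L^i,\partial_\mu]$, and \thref{thm:GlobSob} with $\ell=-1$. The one caution is that ``the eigenvalue bound applied to $\partial_t\psi$'' must mean expanding $\partial_t\psi$ in the eigenframe, where the bad direction enters with the compensating coefficient $e^{-\rho}$, and then using weighted Cauchy--Schwarz (the crude minimum-eigenvalue bound alone would lose a factor $e^{2\rho}$); this is exactly what the paper's exact identity encodes, so the claim stands.
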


\begin{rmk}
	Note that the coefficients of the vector field $L^i$ in rectangular coordinates have size $\approx t$, so the first inequality really states that certain ``good derivatives'' (the ones in the span of $L^i$ with size $\approx 1/t$ coefficients) decay like $t^{-d/2}$, which captures exactly the expected improve decay over the naive $t^{-(d-1)/2}$. We also see from the second inequality that in the ``generic direction'' $\partial_t \phi$ decays only uniformly like $t^{-(d-1)/2}$ as expected; however, we also see improved \emph{interior} decay by $(t-r)^{-1/2}$.

	These peeling properties do not obviously hold using the classical vector field method, using the $\partial_t$-energy, for the lowest order derivatives of the solution. 
\end{rmk}

\begin{rmk}
	We do not need compact support for the estimates derived in this section. See also Footnote \ref{ftnt:cpt} below.

	An interesting feature of our decay estimates, when compared to the classical argument of Klainerman, is that we do not need to commute with the full set of generators of Poincar\'e group, nor do we need to commute with the scaling vector field $S = t \partial_t + \sum x^i \partial_{x^i}$. Here we only commute with the Lorentz boosts and not spatial rotations. 
\end{rmk}

\begin{proof}
	Let $Q_{ab}$ as usual denote the stress energy tensor $\partial_a \phi \partial_b\phi - \frac12 \eta_{ab} \eta(\D*\phi,\D*\phi)$. Consider the energy current 
	\[ \mathcal{J}_a = Q_{ab} (\partial_t)^b.\]
	It is standard that the space-time divergence $\covD^a \mathcal{J}_a = \Box \phi \partial_t \phi$, which in our case vanishes. 

	Integrating in the region between $\Sigma_\tau$ and $\{0\} \times \Real^d$, by the divergence theorem we have\footnote{Note that we don't necessarily have a conservation law; there is formally another boundary integral along null infinity that captures the energy radiated away. By the construction of the energy current, this quantity is signed and its omission in the formula below is what gives the $\leq$ sign instead of the $=$ sign.\label{ftnt:cpt}} 
	\[ \int_{\Sigma_\tau} Q(\partial_\tau, \partial_t) ~\mathrm{dvol}_{\Sigma_\tau} \leq \frac12 \mathcal{E}_0[\phi]^2. \]
	We can compute that 
	\[ \partial_t = \cosh(\rho) \partial_\tau - \tau^{-1} \sinh(\rho) \partial_\rho \]
	which implies 
	\[ Q(\partial_t,\partial_\tau) = \cosh(\rho) Q(\partial_\tau, \partial_\tau) - \tau^{-1} \sinh(\rho) Q(\partial_\rho, \partial_\tau).\]
	By orthogonality we have
	\[ Q(\partial_\rho, \partial_\tau) = \partial_\rho \phi \partial_\tau \phi;\]
	and a standard computation gives
	\[ Q(\partial_\tau, \partial_\tau) = \frac12 (\partial_\tau\phi)^2 + \frac1{2\tau^2} (\partial_\rho\phi)^2 + \frac{1}{2\tau^2 \sinh(\rho)^2} \abs{\partial_\theta \phi}^2.\]
	Combining the two and completing the square we get the following identity
	\[ 2 Q(\partial_\tau, \partial_t) = \frac{\cosh(\rho)}{\tau^2 \sinh(\rho)^2} \abs{\partial_\theta \phi}^2 + \frac{1}{\tau^2 \cosh(\rho)} (\partial_\rho \phi)^2 + \frac1{\cosh(\rho)} (\partial_t \phi)^2. \]
	By \eqref{eq:Lid1} we finally have
	\begin{equation}\label{eq:tenergy}
		Q(\partial_\tau, \partial_t) = \frac{1}{2\tau^2 \cosh(\rho)} \sum_{i = 1}^d (L^i \phi)^2 + \frac{1}{2\cosh(\rho)} (\partial_t\phi)^2.
	\end{equation}
	
	Now, as $L^i$ commutes with the wave operator we have that if $\phi$ solves $\Box \phi = 0$, so does $L^\alpha \phi$. This implies
	\begin{multline}\label{eq:tenergy2}
		\sum_{\alpha \in \mathfrak{I}^{\leq \sobd(d)}} \int_{\Sigma_{\tau}} \frac{1}{\tau^2 \cosh(\rho)} \sum_{i = 1}^d (L^i L^\alpha \phi)^2 \\
		+ \frac{1}{\cosh(\rho)} (\partial_t L^\alpha \phi)^2 ~\mathrm{dvol}_{\Sigma_\tau} 
		\leq \sum_{\alpha \in \mathfrak{I}^{\leq \sobd(d)}} \mathcal{E}_0[L^\alpha \phi]^2.
	\end{multline}
	The pointwise estimates for $L^i \phi$ now follows immediately from \thref{thm:GlobSob}.

	For $\partial_t \phi$, we need to control the integral of $(L^\alpha \partial_t \phi)^2$; the energy estimate only controls $(\partial_t L^\alpha \phi)^2$. We do so by computing the commutator:
	\[ [L^i, \partial_t ] = - \partial_{x^i}, \qquad [L^i, \partial_{x^i}] = - \delta_{ij} \partial_t.\]
	By induction this implies
	\[ \abs{L^\alpha \partial_t \phi} \lesssim \sum_{\abs{\beta} \leq \abs{\alpha}} \abs{\partial_t L^\beta \phi} + \sum_{\abs{\beta} \leq \abs{\alpha} - 1} \sum_{i = 1}^d \abs{\partial_{x^i} L^\beta \phi}.\]
	Noting that 
	\[ \partial_{x^i} = \frac{1}{t} (L^i - x^i \partial_t) \]
	we can bound
	\[ \sum_{\abs{\beta} \leq \abs{\alpha} - 1} \sum_{i = 1}^d \abs{\partial_{x^i} L^\beta \phi} \leq \sum_{\abs{\beta} \leq \abs{\alpha} - 1} \abs{\partial_t L^\beta \phi} + \frac{1}{\tau\cosh\rho} \sum_{\abs{\beta} \leq \abs{\alpha}} \abs{L^\beta \phi}.\]
	And therefore we have the estimate
	\begin{multline}
			\sum_{\alpha \in \mathfrak{I}^{\leq \sobd(d)}} \int_{\Sigma_{\tau}} \frac{1}{\tau^2 \cosh(\rho)} \sum_{i = 1}^d (L^i L^\alpha \phi)^2 + \frac{1}{\cosh(\rho)} (L^\alpha \partial_t \phi)^2 ~\mathrm{dvol}_{\Sigma_\tau} \\
			\lesssim \sum_{\alpha \in \mathfrak{I}^{\leq \sobd(d)}} \int_{\Sigma_{\tau}} \frac{1}{\tau^2 \cosh(\rho)} \sum_{i = 1}^d (L^i L^\alpha \phi)^2 + \frac{1}{\cosh(\rho)} (\partial_t L^\alpha \phi)^2 ~\mathrm{dvol}_{\Sigma_\tau}.
	\end{multline}
	Applying \thref{thm:GlobSob} we also get the decay estimate for $\partial_t\phi$. 
\end{proof}

\section{Decay of linear waves: $K$-energy}

The Morawetz energy can also be used with the hyperboloidal foliation; this gives improved decay properties. As in the previous section, the main contribution is the identity \eqref{eq:Kenergy}. 
For convenience, we shall assume that the data has compact support in this section. 

\begin{prop}\label{prop:ImprovedDecay}
	Let $\phi$ be a solution to $\Box \phi = 0$. Suppose $\phi(2,x)$ and $\partial_t\phi(2,x)$ are both supported on the ball of radius 1 centered at the origin. Then in the region $t > \max(2,|x|)$ we have
	\[
		\abs{L^i \phi} \lesssim \tau^{-\frac{d}2} \cosh(\rho)^{1 - \frac{d}{2}} \left( \norm[H^{\sobd(d)}]{\partial_t\phi(2,\text{---})} + \norm[H^{\sobd(d)+1}]{\phi(2,\text{---})}\right).
	\]
\end{prop}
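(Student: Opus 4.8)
The plan is to run the vector field method with the conformal Morawetz multiplier adapted to the hyperboloidal foliation. Let $K_0 \eqdef (t^2 + \abs{x}^2)\partial_t + 2t\abs{x}\,\partial_r$, the conformal Killing field of $\eta$ (so that $\mathcal{L}_{K_0}\eta = 4t\,\eta$), let $Q_{ab}$ be the stress--energy tensor as in the proof of \thref{prop:energy:derdecay}, and form the \emph{modified} energy current
\[ \widetilde{\mathcal{J}}_a \eqdef Q_{ab}(K_0)^b + (d-1)\,t\,\phi\,\partial_a\phi - \frac{d-1}{2}\,\phi^2\,\partial_a t. \]
A direct computation (using $\Box t = 0$) gives $\covD^a \widetilde{\mathcal{J}}_a = (\Box\phi)\big(K_0\phi + (d-1)t\phi\big)$, which vanishes for our $\phi$; the zeroth-order terms are present precisely to cancel the term $(1-d)\,t\,\eta(\D*\phi,\D*\phi)$ produced by the failure of $K_0$ to be Killing. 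The crux of the section --- the analogue of \eqref{eq:tenergy} --- is the pointwise identity obtained by writing $K_0 = \tau^2\cosh(\rho)\,\partial_\tau + \tau\sinh(\rho)\,\partial_\rho$, substituting the expressions for $Q(\partial_\tau,\partial_\tau)$ and $Q(\partial_\rho,\partial_\tau)$ recorded in the proof of \thref{prop:energy:derdecay}, and completing the square twice --- first collecting the derivative terms via \eqref{eq:Lid1}, then absorbing the $\phi\,\partial_\tau\phi$ cross-term coming from the correction:
\begin{multline}\label{eq:Kenergy}
	\widetilde{\mathcal{J}}(\partial_\tau) = \frac{1}{2\tau^2\cosh(\rho)}\big(K_0\phi + (d-1)t\phi\big)^2 + \frac{1}{2\cosh(\rho)}\sum_{i=1}^d (L^i\phi)^2 \\
	- \frac{d-1}{2\sinh(\rho)^{d-1}}\,\partial_\rho\big(\sinh(\rho)^d\,\phi^2\big).
\end{multline}
The last term is a total $\rho$-derivative that integrates to zero against $\mathrm{dvol}_{\Sigma_\tau} = \tau^d\sinh(\rho)^{d-1}\,\D\theta\,\D\rho$: there is no contribution at $\rho = 0$ (where $\sinh(\rho)^d$ vanishes), nor as $\rho \to \infty$, because finite speed of propagation from the unit ball at time $2$ forces $\phi$ to vanish for large $\rho$ on each fixed $\Sigma_\tau$ (on $\mathrm{supp}\,\phi \cap \Sigma_\tau$ one has $\rho \leq \ln\tau$). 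Consequently $\widetilde{\mathcal{J}}(\partial_\tau) \geq 0$, and after integration $\int_{\Sigma_\tau}\widetilde{\mathcal{J}}(\partial_\tau)\,\mathrm{dvol}_{\Sigma_\tau} \geq \frac12 \int_{\Sigma_\tau}\frac{1}{\cosh(\rho)}\sum_{i=1}^d (L^i\phi)^2\,\mathrm{dvol}_{\Sigma_\tau}$.

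Next I would apply the divergence theorem to $\widetilde{\mathcal{J}}[L^\alpha\phi]$, for each $\alpha \in \mathfrak{I}^{\leq \sobd(d)}$, on the region of $\{t > \abs{x}\}$ lying between the slice $\{t = 2\}$ and a hyperboloid $\Sigma_{\tau_0}$ with $\tau_0 \geq 2$ (so $\Sigma_{\tau_0} \subset \{t \geq 2\}$). Since $L^i$ commutes with $\Box$, each $L^\alpha\phi$ again solves $\Box(L^\alpha\phi) = 0$ and, by finite speed, has compact support on every constant-$t$ slice, so there is no flux at null infinity; exactly as in the proof of \thref{prop:energy:derdecay} this gives $\int_{\Sigma_{\tau_0}}\widetilde{\mathcal{J}}[L^\alpha\phi](\partial_\tau)\,\mathrm{dvol}_{\Sigma_{\tau_0}} \leq \int_{\{t=2\}}\widetilde{\mathcal{J}}[L^\alpha\phi](\partial_t)\,\D{x}$. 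On $\{t=2\}\cap\{\abs{x}\leq 1\}$ --- which contains the support of $L^\alpha\phi$ at time $2$ --- the integrand is controlled by $(\partial_t L^\alpha\phi)^2 + \abs{\covD L^\alpha\phi}^2 + (L^\alpha\phi)^2$; expanding the operators $L^i = t\partial_{x^i} + x^i\partial_t$ and repeatedly using $\Box\phi = 0$ to replace $\partial_t^2\phi$ by $\covD^2\phi$, each such quantity at $t = 2$ is a bounded combination of spatial derivatives of $\phi(2,\text{---})$ of order $\leq \sobd(d) + 1$ and of $\partial_t\phi(2,\text{---})$ of order $\leq \sobd(d)$. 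Writing $D \eqdef \norm[H^{\sobd(d)}]{\partial_t\phi(2,\text{---})} + \norm[H^{\sobd(d)+1}]{\phi(2,\text{---})}$, we conclude
\[ \sum_{\alpha \in \mathfrak{I}^{\leq \sobd(d)}} \int_{\Sigma_\tau} \frac{1}{\cosh(\rho)} \sum_{i=1}^d (L^i L^\alpha\phi)^2 \,\mathrm{dvol}_{\Sigma_\tau} \lesssim D^2 \qquad \text{for every } \tau \geq 2. \]

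Finally I would insert this into the global Sobolev inequality. As $(\alpha,i)$ ranges over $\mathfrak{I}^{\leq \sobd(d)} \times \{1,\dots,d\}$ the functions $L^i L^\alpha\phi$ exhaust $\{L^\beta\phi : 1 \leq \abs{\beta} \leq \sobd(d)+1\}$, so for each fixed $i$ one has $\sum_{\alpha \in \mathfrak{I}^{\leq \sobd(d)}} \abs{L^\alpha(L^i\phi)}^2 \leq \sum_{1 \leq \abs{\beta} \leq \sobd(d)+1} \abs{L^\beta\phi}^2$, whose weighted integral over $\Sigma_\tau$ is $\lesssim D^2$ by the previous display. Applying \thref{thm:GlobSob} to $f = L^i\phi$ with $\ell = -1$ then yields
\[ \abs{L^i\phi(\tau,\rho,\theta)}^2 \lesssim \tau^{-d}\cosh(\rho)^{2-d} \sum_{\alpha \in \mathfrak{I}^{\leq \sobd(d)}} \int_{\Sigma_\tau} \frac{1}{\cosh(\rho)}\abs{L^\alpha L^i\phi}^2 \,\mathrm{dvol}_{\Sigma_\tau} \lesssim \tau^{-d}\cosh(\rho)^{2-d}\,D^2, \]
which, upon taking square roots, is exactly the claimed bound for $\tau \geq 2$. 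The remaining region --- where $\tau$ is bounded, equivalently the part of $\{t > \max(2,\abs{x})\}$ within bounded distance of $\{t=2\}$, on which $\tau^{-d/2}\cosh(\rho)^{1-d/2} \approx 1$ over $\mathrm{supp}\,\phi$ --- is handled by \thref{prop:energy:derdecay}, the energies $\mathcal{E}_0$ being replaced via conservation by those at time $2$, which are $\lesssim D^2$ as above.

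The step I expect to be the main obstacle is the derivation of \eqref{eq:Kenergy}: performing the two square completions, recognizing the residual zeroth-order terms as the exact $\rho$-derivative $\partial_\rho(\sinh(\rho)^d\phi^2)$, and checking that the attendant boundary terms vanish --- which is precisely where compactness of the support of the data is used, through the finite-speed bound $\rho \leq \ln\tau$ on $\mathrm{supp}\,\phi \cap \Sigma_\tau$. A secondary point needing care is the reduction of the flux on $\{t=2\}$ to the stated norms of the Cauchy data: one must track how composing the $L^i$ and evaluating at $t = 2$ raises the differential order, and use the equation to trade time derivatives for spatial ones so that exactly $\sobd(d)+1$ derivatives of $\phi$ and $\sobd(d)$ of $\partial_t\phi$ occur, matching the $H^{\sobd(d)+1}$ and $H^{\sobd(d)}$ norms in the statement.
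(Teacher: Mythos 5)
Your proposal is correct and follows essentially the same route as the paper: the same modified Morawetz current $Q_{ab}K^b + (d-1)t\phi\,\partial_a\phi - \tfrac{d-1}{2}\phi^2\partial_a t$, the same hyperboloidal decomposition of $K$ and completion of squares leading to \eqref{eq:Kenergy}, the same use of finite speed of propagation to kill the boundary/weight issues, commutation with $L^\alpha$, and one application of \thref{thm:GlobSob} with $\ell=-1$ (the paper merely integrates the zeroth-order terms by parts before completing the square, whereas you complete the square pointwise and recognize the residual as the exact derivative $\partial_\rho(\sinh(\rho)^d\phi^2)/\sinh(\rho)^{d-1}$ --- the same computation in a different order). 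The only slip is the phrase asserting $\widetilde{\mathcal{J}}(\partial_\tau)\geq 0$ pointwise: the exact-derivative term has no sign, so the positivity (and the lower bound by the $L^i\phi$ terms) holds only after integration over $\Sigma_\tau$, which is all your argument actually uses.
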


\begin{rmk}
	We see that this gives a gain of an additional factor of $\tau^{-1}$ decay for derivatives in the ``good directions''. In terms of uniform-in-$x$ decay, this means a gain of $t^{-1/2}$ (as well as a gain of $(t - r)^{-1/2}$ which is not uniform on constant $t$ hyperplanes). 
\end{rmk}

\begin{proof}
	Define the modified current
	\begin{equation}
		\mathcal{K}_a = Q_{ab} K^b + \frac{d-1}{2} t \partial_a (\phi^2) - \frac{d-1}{2} \phi^2 \partial_a t
	\end{equation}
	where $K$ is the vector field
	\begin{equation}
		K \eqdef (t^2 + |x|^2) \partial_t + 2tr \partial_r = \tau^2 \cosh(\rho) \partial_\tau + \tau\sinh(\rho) \partial_\rho.
	\end{equation}
	It is a simple exercise to check that the space-time divergence
	\begin{equation}
		\covD^a \mathcal{K}_a = \Box\phi [ K\phi + (d-1) t \phi].
	\end{equation}
	By virtue of finite speed of propagation and the divergence theorem, we have that
	\[ \int_{\Sigma_\tau} \mathcal{K}_a (\partial_\tau)^a ~\mathrm{dvol}_{\Sigma_\tau} = \int_{\{2\}\times\mathbb{R}^d} \mathcal{K}_a (\partial_t)^a \D{x}.\]
	The integral on the right is entirely determined by the initial data, so we focus on evaluating the integral over $\Sigma_\tau$. The integrand is
	\[ \mathcal{K}_a(\partial_\tau)^a = Q(K, \partial_\tau) + \frac{d-1}{2} \tau \cosh(\rho) \partial_\tau (\phi^2) - \frac{d-1}{2} \phi^2 \cosh(\rho). \]
	The middle term, observe, can be re-written in terms of $K$ and $\partial_\rho$.
	\begin{align*}
		\mathcal{K}_a(\partial_\tau)^a &= Q(K, \partial_\tau) + \frac{d-1}{2\tau} K(\phi^2) - \frac{d-1}{2} \sinh(\rho) \partial_\rho (\phi^2) - \frac{d-1}{2} \phi^2 \cosh(\rho)\\
		& = Q(K, \partial_\tau) + \frac{d-1}{2\tau} K(\phi^2) - \frac{d-1}{2} \partial_\rho [\sinh(\rho) \phi^2].
	\end{align*}
	We wish to integrate
	\begin{multline*}
		\int_{\Sigma_\tau} \mathcal{K}_a (\partial_\tau)^a ~\mathrm{dvol}_{\Sigma_\tau} = \\
		\int_0^\infty \int_{\Sphere^{d-1}} \left\{ Q(K, \partial_\tau) + \frac{d-1}{2\tau} K(\phi^2) - \frac{d-1}{2} \partial_\rho[\sinh(\rho) \phi^2] \right\} \tau^d \sinh(\rho)^{d-1} \D\theta \D\rho.
	\end{multline*}
	Using the finite-speed of propagation property again, which asserts that $\phi$ has compact support on each $\Sigma_\tau$, we can integrate the final term in the braces by parts to get
	\begin{multline*}
		\int_{\Sigma_\tau} \mathcal{K}_a (\partial_\tau)^a ~\mathrm{dvol}_{\Sigma_\tau} = \\
		\int_0^\infty \int_{\Sphere^{d-1}} \left\{ Q(K, \partial_\tau) + \frac{d-1}{2\tau} K(\phi^2) + \frac{(d-1)^2}{2} \cosh(\rho) \phi^2 \right\} \tau^d \sinh(\rho)^{d-1} \D\theta \D\rho.
	\end{multline*}

	Next, we can write
	\begin{align*}
		Q(K,\partial_\tau) & = \tau^2 \cosh(\rho) Q(\partial_\tau, \partial_\tau) + \tau \sinh(\rho) Q(\partial_\tau, \partial_\rho) \\
		&= \frac12 \cosh(\rho) \left[ \tau^2 (\partial_\tau\phi)^2 + (\partial_\rho\phi)^2 + \frac{1}{\sinh(\rho)^2} \abs{\partial_\theta \phi}^2 \right] + \tau \sinh(\rho) \partial_\tau \phi \partial_\rho \phi\\
		& = \frac1{2\cosh(\rho)} \Big\{ \big[ \tau\cosh(\rho) \partial_\tau \phi + \sinh(\rho) \partial_\rho \phi\big]^2 + (\partial_\rho\phi)^2 + \frac{\cosh(\rho)^2}{\sinh(\rho)^2} \abs{\partial_\theta\phi}^2 \Big\}\\
		& = \frac{1}{2\cosh(\rho)} \Big[ \frac{1}{\tau^2} (K\phi)^2 + \sum_{i = 1}^d (L^i \phi)^2 \Big].
	\end{align*}
	So finally, completing the square one more time we get
	\begin{multline}\label{eq:Kenergy}
		\int_{\Sigma_\tau} \mathcal{K}_a (\partial_\tau)^a ~\mathrm{dvol}_{\Sigma_\tau} = \int_{\Sigma_\tau} \frac{1}{2\cosh(\rho)} \sum_{i = 1}^d (L^i \phi)^2 \\
		+ \frac{1}{2\tau^2 \cosh(\rho)} \big[ K\phi + (d-1) t \phi \big]^2~ \mathrm{dvol}_{\Sigma_{\tau}}.
	\end{multline}

	Applying this estimate to $L^\alpha \phi$ for $\alpha \in \mathfrak{I}^{\leq \sobd(d)}$, we see that 
	\[ \sum_{\alpha \in \mathfrak{I}^{\leq \sobd(d)}} \int_{\Sigma_\tau} \frac{1}{\cosh(\rho)} \sum_{i = 1}^d \abs{L^i L^\alpha \phi}^2  ~\mathrm{dvol}_{\Sigma_\tau} \]
	is uniformly bounded by the initial data, which in turn is bounded by 
	\[ \norm[H^{\sobd(d)}]{\partial_t\phi(2,\text{---})}^2 + \norm[H^{\sobd(d)+1}]{\phi(2,\text{---})}^2 \]
	(we make use of the compact support again to control the various weights). One application of \thref{thm:GlobSob} gives the desired decay.  
\end{proof}

Comparing the energy quantities \eqref{eq:Kenergy} and \eqref{eq:tenergy}, we see a clearly hierarchy. This is related to the $r^p$-weighted energy estimates of Dafermos and Rodnianski \cite{DafRod2009}. 
To make this comparison more explicit, we show that using the estimate \eqref{eq:Kenergy} one can also derive decay estimates for the twisted derivative $K\phi + (d-1)t\phi$. 
Observe first the commutator identity
\begin{equation}
	[L^i, K + (d-1) t] = \frac{x^i}{t} (K + (d-1)t) + \frac{\tau^2}{t} L^i
\end{equation}
which implies that 
\begin{multline}\label{eq:Kcommu}
	\int \frac{1}{\tau^2 \cosh(\rho)} \abs{L^i [K + (d-1)t]\phi}^2 ~\mathrm{dvol}_{\Sigma_{\tau}} \\
	\lesssim 
	\int \frac{1}{\tau^2 \cosh(\rho)} \left( \abs{[K + (d-1)t] L^i \phi}^2 + \abs{[K + (d-1)t]\phi}^2 \right) + \frac{1}{\cosh(\rho)} (L^i\phi)^2 ~\mathrm{dvol}_{\Sigma_{\tau}}. 
\end{multline}
Therefore applying this to $L^\alpha \phi$, we see that the energy identity shows that
\[ \sum_{\alpha \in \mathfrak{I}^{\leq \sobd(d)}} \int_{\Sigma_\tau} \frac{1}{\cosh(\rho)} \sum_{i = 1}^d \abs{L^\alpha [K + (d-1)t] \phi}^2  ~\mathrm{dvol}_{\Sigma_\tau} \]
is uniformly bounded by the $K$-energies for up to $\sobd(d)$ derivatives of the solution, which after an application of \thref{thm:GlobSob} gives that 
\begin{equation}\label{eq:Kptwbd}
	|K\phi + (d-1)t\phi| \lesssim \tau^{1 - \frac{d}{2}} \cosh(\rho)^{1 - \frac{d}{2}}.
\end{equation}

\begin{rmk}
	The point-wise bound \eqref{eq:Kptwbd} is, to the author's knowledge, new. 
	Observe that since $K$ decomposes as $K = \tau^2 \partial_t + 2 \sum_{i = 1}^d x^i L^i$, this estimate \emph{cannot} be obtained from \thref{prop:ImprovedDecay} and \thref{prop:energy:derdecay}.
	The twisted structure involving the lower order term $(d-1)t\phi$ is important as this imparts additional cancellations. Indeed, the pointwise estimates for $\partial_t\phi$, $L^i\phi$, and $(K + (d-1)t)\phi$ are independent of each other in the sense that any two of the three is not sufficient to derive the third. 
\end{rmk}

\begin{rmk}
	This point-wise bound \eqref{eq:Kptwbd} will play an important role in the discussion in the sequel. When applying the $K$-energy to nonlinear applications, one pays for the additional decay gained by requiring stronger estimates on the inhomogeneities. 
	In traditional applications of the vector field method this often translates to a logarithmic growth of the top-order energies, when certain terms most naturally estimated using $(K+(d-1)t)\phi$ are instead estimated using the decomposition described in the previous remark. 
	We will take full advantage of this estimate by performing the decomposition \eqref{eq:bilinearest} and fully expressing the nonlinearities, where necessary, using this operator. 
	This allows us to avoid the log-loss of top-order energies, which is then itself crucial for proving our perturbation \thref{thm:pert}. 
\end{rmk}

\section{Decay of $\phi$ itself}

In this section we prove that the energy integrals considered in the previous two sections given in \eqref{eq:tenergy} and \eqref{eq:Kenergy} have nice coercivity properties on the $L^2$-integral for $\phi$ itself. 
In the context of the study of wave equations, our results highlight the following two ideas:
\begin{enumerate}
	\item First, it shows that the Morawetz energy is not necessary for controlling the decay of the solution $\phi$ itself. This has applications to situations where the Morwawetz energy is not available, due to the lack of conformal symmetry of the underlying equation. A particular such application is to the study of wave equations on Kaluza-Klein backgrounds. See the discussion in \thref{rmk:KK}. 
	\item Second, it shows that one can in fact obtain estimates on the solution $\phi$ itself using purely physical space techniques, in dimension $d = 2$. Such estimates were previously unavailable. The downside to our argument is that compact support of initial data seems essential in this case. 
\end{enumerate}

The basic technical tool for obtaining the coercivity are the following \emph{Hardy} inequalities. 
\begin{lem}[Hardy, $d \geq 3$] \label{lem:hardy:3}
	Let $d \geq 3$, then
	\[ \int_{\Sigma_\tau} \frac{1}{\cosh(\rho)} \phi^2 ~\mathrm{dvol}_{\Sigma_\tau} \leq \frac{4}{(d-2)^2} \int_{\Sigma_\tau} \frac{1}{\cosh(\rho)} \sum_{i = 1}^d (L^i \phi)^2 ~\mathrm{dvol}_{\Sigma_\tau}.\]
\end{lem}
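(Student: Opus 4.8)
The plan is to localize to the hyperbolic-radial variable $\rho$ and reduce to a one-dimensional weighted Hardy inequality. Write $m(\rho) \eqdef \sinh(\rho)^{d-1}/\cosh(\rho)$, so that on a fixed hyperboloid (where $\tau$ is constant) one has $\tfrac{1}{\cosh(\rho)}\,\mathrm{dvol}_{\Sigma_\tau} = \tau^d\, m(\rho)\,\D\theta\,\D\rho$. By \eqref{eq:Lid1} we have the pointwise bound $\sum_{i=1}^d (L^i\phi)^2 = (\partial_\rho\phi)^2 + \tfrac{\cosh(\rho)^2}{\sinh(\rho)^2}\abs{\partial_\theta\phi}^2 \geq (\partial_\rho\phi)^2$. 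Hence it suffices to prove, for each fixed $\theta\in\Sphere^{d-1}$ and $\phi = \phi(\rho,\theta)$ viewed as a function of $\rho$,
\[ \int_0^\infty m(\rho)\,\phi^2\,\D\rho \leq \frac{4}{(d-2)^2} \int_0^\infty m(\rho)\,(\partial_\rho\phi)^2\,\D\rho, \]
and then integrate over $\theta\in\Sphere^{d-1}$ and multiply through by the constant $\tau^d$.

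The crux is the elementary differential inequality
\[ m'(\rho) = \sinh(\rho)^{d-2}\big[(d-1) - \tanh(\rho)^2\big] \geq (d-2)\, m(\rho), \qquad \rho > 0. \]
Indeed, $m'/m = (d-1)\coth(\rho) - \tanh(\rho)$, and setting $u = \coth(\rho)$ this equals $(d-1)u - 1/u$, which is increasing in $u$ and takes the value $d-2$ at $u = 1$; since $u = \coth(\rho) > 1$ for $\rho > 0$, the inequality follows. It is worth recording that this is precisely where the two-dimensional case degenerates: for $d = 2$ one has $m'/m = 1/(\sinh\rho\cosh\rho)\to 0$ as $\rho\to\infty$, so no such bound with a positive constant holds, which is the structural origin of the logarithmic loss and of the need for the separate Hardy estimate in $d = 2$.

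Granting the differential inequality, and assuming first that $\phi$ decays sufficiently fast as $\rho\to\infty$ (e.g. that it is compactly supported, which in our applications holds by finite speed of propagation), we estimate
\[ \int_0^\infty m\,\phi^2\,\D\rho \leq \frac{1}{d-2}\int_0^\infty m'\,\phi^2\,\D\rho = \frac{1}{d-2}\Big( \big[m\phi^2\big]_0^\infty - 2\int_0^\infty m\,\phi\,\partial_\rho\phi\,\D\rho\Big). \]
The boundary term vanishes at $\rho = 0$ because $m(0) = 0$ (here $d-1\geq 1$) and $\phi$ is smooth across the origin $x = 0$ of $\Sigma_\tau$, and at $\rho = \infty$ by the decay hypothesis. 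Cauchy--Schwarz then gives $\int m\,\phi^2 \leq \tfrac{2}{d-2}\big(\int m\,\phi^2\big)^{1/2}\big(\int m\,(\partial_\rho\phi)^2\big)^{1/2}$; dividing and squaring yields the one-dimensional inequality, and the general case follows by density. The only genuine point requiring care is the precise function class for which the statement is asserted — i.e. the justification of the vanishing boundary term at infinity and the density step — but since in every application $\phi$ is the restriction to $\Sigma_\tau$ of a strongly localized solution, this is a formality rather than a real obstacle; the one substantive thing to get right is the sign and constant in $m' \geq (d-2)m$, which is exactly what pins down the factor $4/(d-2)^2$.
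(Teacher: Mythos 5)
Your proof is correct and follows essentially the same route as the paper: both reduce to a one-dimensional Hardy inequality in $\rho$ via integration by parts and Cauchy--Schwarz (under the same rapid-decay/compact-support proviso), then use \eqref{eq:Lid1} to dominate $(\partial_\rho\phi)^2$ by $\sum_{i}(L^i\phi)^2$ and integrate in $\theta$. The only cosmetic difference is that you differentiate the measure weight $m(\rho)=\sinh(\rho)^{d-1}/\cosh(\rho)$ directly, using the logarithmic-derivative bound $m'\geq(d-2)m$, whereas the paper differentiates $\sinh(\rho)^{d-2}$ and invokes $\tanh(\rho)\leq 1$ afterwards; both yield the same constant $4/(d-2)^2$.
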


\begin{lem}[Hardy, $d = 2$] \label{lem:hardy:2}
	When $d = 2$, then 
	\[ \int_{\Sigma_\tau} \frac{1}{\cosh(\rho)} \phi^2 ~\mathrm{dvol}_{\Sigma_\tau} \lesssim \int_{\Sigma_\tau} \frac{(1 + \rho^2)}{\cosh(\rho)} \sum_{i = 1}^d (L^i \phi)^2 ~\mathrm{dvol}_{\Sigma_\tau}.\]
\end{lem}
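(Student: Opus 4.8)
The plan is to fiber $\Sigma_\tau$ over the angular sphere and reduce the claim to a one-dimensional weighted Hardy inequality, which then follows from a single integration by parts. Along $\Sigma_\tau$, identity \eqref{eq:Lid1} gives
\[ \sum_{i=1}^d (L^i\phi)^2 = (\partial_\rho\phi)^2 + \frac{\cosh(\rho)^2}{\sinh(\rho)^2}\,\abs{\partial_\theta\phi}^2 \;\geq\; (\partial_\rho\phi)^2, \]
and for $d = 2$ the induced volume form is $\mathrm{dvol}_{\Sigma_\tau} = \tau^2\sinh(\rho)\,\D\theta\,\D\rho$. Writing both sides of the asserted inequality in the coordinates $(\rho,\theta)$, cancelling the constant factor $\tau^2$, and discarding the nonnegative angular term on the right, it suffices to prove, for each fixed $\theta\in\Sphere^1$, the scalar estimate
\[ \int_0^\infty \tanh(\rho)\,\phi^2\,\D\rho \;\lesssim\; \int_0^\infty (1+\rho^2)\tanh(\rho)\,(\partial_\rho\phi)^2\,\D\rho \]
with an implicit constant independent of $\theta$ and $\tau$, and then to integrate in $\theta$. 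I use here that $\phi$ has compact support on $\Sigma_\tau$ (by finite speed of propagation, which is the relevant situation), so that the left-hand side is a priori finite and the boundary term below at $\rho=\infty$ vanishes; some hypothesis of this kind is necessary, since nonzero constants violate the bare inequality.

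For the scalar estimate, set $G(\rho) \eqdef \log\cosh(\rho)$, so that $G'(\rho) = \tanh(\rho)$ and $G(0) = 0$. Integration by parts, the boundary term vanishing at $\rho=0$ because $G(0)=0$ and at $\rho=\infty$ by compact support, gives
\[ \int_0^\infty \tanh(\rho)\,\phi^2\,\D\rho = -2\int_0^\infty G(\rho)\,\phi\,\partial_\rho\phi\,\D\rho, \]
and then Cauchy--Schwarz against the weight $(1+\rho^2)\tanh(\rho)$ yields
\[ \int_0^\infty \tanh(\rho)\,\phi^2\,\D\rho \leq 2\left(\int_0^\infty \frac{G(\rho)^2}{(1+\rho^2)\tanh(\rho)}\,\phi^2\,\D\rho\right)^{1/2}\left(\int_0^\infty (1+\rho^2)\tanh(\rho)\,(\partial_\rho\phi)^2\,\D\rho\right)^{1/2}. \]
The argument then closes given the elementary pointwise bound $(\log\cosh\rho)^2 \leq C(1+\rho^2)\tanh(\rho)^2$ on $(0,\infty)$: this makes the first factor at most $C^{1/2}\bigl(\int_0^\infty\tanh(\rho)\phi^2\,\D\rho\bigr)^{1/2}$, and dividing by that finite square root yields the scalar estimate with constant $4C$. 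The pointwise bound I would check by comparing the continuous positive ratio of the two sides at the endpoints: as $\rho\to 0$, $(\log\cosh\rho)^2\sim\rho^4/4$ while $(1+\rho^2)\tanh(\rho)^2\sim\rho^2$; as $\rho\to\infty$, both behave like $\rho^2$.

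There is no genuine difficulty; the only point requiring care is the choice $G = \log\cosh\rho$ for the antiderivative of the weight $\tanh\rho$, which is exactly what makes the Muckenhoupt-type quotient $G^2/\bigl((1+\rho^2)\tanh\rho\bigr)$ comparable to $\tanh\rho$ both near $\rho = 0$ — where $G\approx\rho^2/2$ supplies enough extra vanishing that no logarithmic weight is needed — and as $\rho\to\infty$, where $G\approx\rho$ and the weight $1+\rho^2$ is precisely what is demanded. The loss of this $1+\rho^2$ factor (equivalently, of a squared logarithm in the Euclidean radial variable) is genuine: it is the two-dimensional shadow of the degeneracy that \thref{lem:hardy:3} avoids outright when $d\ge 3$ with the clean constant $4/(d-2)^2$. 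One could instead invoke Muckenhoupt's criterion for one-dimensional Hardy inequalities with logarithmic weights, but the direct computation above is shorter and self-contained.
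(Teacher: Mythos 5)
Your proof is correct and follows essentially the same route as the paper: reduce via \eqref{eq:Lid1} and the explicit volume form to a one-dimensional weighted Hardy inequality in $\rho$, then integrate by parts and apply Cauchy--Schwarz against the $(1+\rho^2)\tanh\rho$ weight before absorbing the resulting factor. The only (cosmetic) difference is your choice of antiderivative $\log\cosh\rho$ for the weight $\tanh\rho$ with a pointwise Muckenhoupt-type comparison, where the paper uses the antiderivative $\rho$ of the weight $1$ and afterwards inserts $\tanh\rho\le 1$ and $\rho\sqrt{1+\rho^2}\lesssim(1+\rho^2)\tanh\rho$.
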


\begin{rmk}
	When $d \geq 3$, noting that $\sinh(\rho)^{d-1} / \cosh(\rho)$ grows asymptotically like $\sinh(\rho)^{d-2}$, we see that our Hardy inequality can be viewed as the statement that the hyperbolic space $\mathbb{H}^{d-1}$ of one lower dimension has a spectral gap (and hence $\mathring{H}^1$ controls $L^2$). When $d = 2$ this obviously degenerates, and the best we can have is essentially the Hardy inequality on the half-line. Note that compared to the exponential weight in $\rho$, the polynomial $\rho^2$ is effectively a logarithm. 
\end{rmk}

\begin{rmk}
	The Hardy type inequalities of Lemma \ref{lem:hardy:3} is well-known in the study of weighted Sobolev spaces. See, e.g.\ Theorem 1.3 in \cite{Bartni1986}; the $d = 2$ case corresponds to the forbidden $\delta = 0$ case in that theorem. Our main contribution in this section is Lemma \ref{lem:hardy:2}. We include the proof of both for completeness. 
\end{rmk}

\begin{proof}[\thref{lem:hardy:3}]
	For any $f:\Real_+ \to \Real$ that decays rapidly as $\rho \to \infty$, observe that
	\[ \int_0^\infty \partial_\rho \big[ \sinh(\rho)^\alpha f(\rho)^2\big] \D\rho = 0\]
	by the fundamental theorem of calculus. So we have
	\[ \alpha \int_0^\infty f(\rho)^2 \cosh(\rho) \sinh(\rho)^{\alpha - 1} \D\rho \leq \Big| 2 \int_0^\infty f(\rho) f'(\rho) \sinh(\rho)^\alpha \D{\rho} \Big|.\]
	Applying Cauchy-Schwarz to the right, and squaring both sides, give
	\[ \alpha^2 \int_0^\infty f(\rho)^2 \cosh(\rho) \sinh(\rho)^{\alpha - 1} \D\rho \leq 4 \int_0^\infty [f'(\rho)]^2 \frac{\sinh(\rho)^{\alpha + 1}}{\cosh(\rho)} \D{\rho}.\]
	Now set $\alpha = d-2$, so $\alpha + 1 = d-1$. We can integrating in the spherical directions and use \eqref{eq:Lid1} to control $(\partial_\rho f)^2$. On the left we apply the simple observation that $\tanh(\rho) \leq 1$. This leads to exactly the claimed inequality. 
\end{proof}

\begin{proof}[\thref{lem:hardy:2}]
	For any $f:\Real_+ \to \Real$ that decays rapidly as $\rho \to \infty$, we observe that
	\[ \int_0^\infty \partial_\rho \big[ \rho f(\rho)^2 \big] \D\rho = 0\]
	which implies
	\begin{multline*}
		\int_0^\infty f(\rho)^2 \D\rho \leq 2 \int_0^\infty f(\rho) f'(\rho) \rho \D(\rho) \\
			\leq 2 \Big[ \int_0^\infty \frac{\rho}{\sqrt{1 + \rho^2}} f(\rho)^2 \D\rho \Big]^{\frac12} \cdot \Big[ \int_0^\infty \rho \sqrt{1 + \rho^2} [f'(\rho)]^2 \D\rho \Big]^{\frac12}
	\end{multline*}
	by Cauchy-Schwarz. This implies, since $\rho / \sqrt{1 + \rho^2} < 1$, 
	\[ \int_0^\infty f(\rho)^2 \D\rho \leq 4 \int_0^\infty [f'(\rho)]^2 \rho \sqrt{1 + \rho^2}  \D\rho. \]
	Now since $\lim_{\rho \to 0} \tan(\rho) / \rho = 1$, there exists some constant $C$ such that $\rho \leq C\tan(\rho) \sqrt{1 + \rho}$ for all $\rho > 0$. 
	This implies 
	\[ \int_0^\infty f(\rho)^2 \tanh(\rho) \D\rho  \leq C \int_0^\infty (1 + \rho^2) \tanh(\rho) [f'(\rho)]^2 \D\rho \]
	which, after integrating in the spherical directions and using \eqref{eq:Lid1} is exactly the claimed inequality. 
\end{proof}

Applying the Hardy inequalities to the energy estimates \eqref{eq:tenergy2} and \eqref{eq:Kenergy} we get the following decay estimates for $\phi$ itself when dimension $d \geq 3$.  
We omit the obvious proofs. 
\begin{prop}[$d \geq 3$, $\partial_t$-energy]
	Let $\phi$ solve the linear wave equation on $\Real^{1,d}$ with finite energy initial data, in the sense that $\sum_{\alpha \in \mathfrak{I}^{\leq \sobd(d)}} \mathcal{E}_0[L^\alpha \phi]^2 < \infty$. Then 
	\[ \abs{\phi} \lesssim t^{1 - d/2}.\]
\end{prop}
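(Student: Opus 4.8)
The statement is that finite (boosted) energy solutions of $\Box\phi = 0$ on $\Real^{1,d}$, $d\geq 3$, obey $\abs{\phi}\lesssim t^{1-d/2}$ in the interior cone $\{t>\abs{x}\}$. The plan is to combine three ingredients already available in the excerpt: the pointwise-to-energy reduction of \thref{thm:GlobSob}, the $\partial_t$-energy identity \eqref{eq:tenergy2}, and the Hardy inequality \thref{lem:hardy:3}. The target of \thref{thm:GlobSob} is not $\phi$ directly but a weighted $L^2$ norm of $L^\alpha\phi$ over $\Sigma_\tau$ for $\abs\alpha\leq\sobd(d)$; so I need to bound, uniformly in $\tau$,
\[
	\sum_{\alpha\in\mathfrak{I}^{\leq\sobd(d)}}\int_{\Sigma_\tau}\frac{1}{\cosh(\rho)}\abs{L^\alpha\phi}^2~\mathrm{dvol}_{\Sigma_\tau},
\]
with the weight $\cosh(\rho)^{\ell}$ of the theorem taken to be $\ell=0$ (so the decay factor is $\tau^{-d}\cosh(\rho)^{1-d}$, i.e. $t^{1-d/2}\cdot(t^2-r^2)^{(1-d)/2}\cdot(\ldots)$ — in any case $\lesssim t^{2-2d}$ pointwise, whose square root is $t^{1-d}$; more carefully one keeps $\tau^{-d}\cosh(\rho)^{1-d}$ and notes its square root is $\tau^{-d/2}\cosh(\rho)^{(1-d)/2}\leq\tau^{-d/2}$... — actually the cleanest route is to observe $\tau\cosh\rho=t$ so $\tau^{-d}\cosh(\rho)^{1-d}=t^{1-d}\tau^{-1}\leq t^{1-d}$ only where $\tau\geq1$; to get exactly $t^{1-d/2}$ one instead writes $\tau^{-d}\cosh(\rho)^{1-d}=(\tau\cosh\rho)^{1-d}\cdot\tau^{-1}=t^{1-d}\tau^{-1}$, hmm). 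Let me restate: I would apply \thref{thm:GlobSob} with a suitably chosen $\ell$ so that the prefactor matches, and the arithmetic of exponents is the routine part.

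First, from \eqref{eq:tenergy2} I have the uniform bound
\[
	\sum_{\alpha\in\mathfrak{I}^{\leq\sobd(d)}}\int_{\Sigma_\tau}\frac{1}{\tau^2\cosh(\rho)}\sum_{i=1}^d(L^iL^\alpha\phi)^2+\frac{1}{\cosh(\rho)}(\partial_tL^\alpha\phi)^2~\mathrm{dvol}_{\Sigma_\tau}\leq\sum_{\alpha\in\mathfrak{I}^{\leq\sobd(d)}}\mathcal{E}_0[L^\alpha\phi]^2.
\]
Second, applying \thref{lem:hardy:3} to each $L^\alpha\phi$ (which is legitimate because, by finite speed of propagation or the assumed finiteness of the boosted energies together with the conformal structure, $L^\alpha\phi$ decays suitably along each $\Sigma_\tau$) converts the first-order term into control of $\int_{\Sigma_\tau}\cosh(\rho)^{-1}\abs{L^\alpha\phi}^2~\mathrm{dvol}_{\Sigma_\tau}$ for $\abs\alpha\leq\sobd(d)-1$; combining with the $\abs\alpha=\sobd(d)$ terms already present (where the $L^iL^\alpha$ with $\abs\alpha=\sobd(d)$ contribute $L^\beta$ with $\abs\beta=\sobd(d)+1$, hence I should run the energy estimate at one extra order — this is why the nonlinear theorem asks for $k\geq4$ but here it is harmless since we merely assume all boosted energies finite). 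The upshot is that $\sum_{\abs\alpha\leq\sobd(d)}\int_{\Sigma_\tau}\cosh(\rho)^{-1}\abs{L^\alpha\phi}^2~\mathrm{dvol}_{\Sigma_\tau}$ is bounded uniformly in $\tau$ by the initial data.

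Third, I feed this into \thref{thm:GlobSob}. Choosing $\ell$ so that $\cosh(\rho)^{\ell}$ reproduces the $\cosh(\rho)^{-1}$ weight — i.e. $\ell=-1$ — the theorem gives $\abs{\phi(\tau,\rho,\theta)}^2\lesssim\tau^{-d}\cosh(\rho)^{2-d}\cdot(\text{uniform constant})$. Since $\tau\cosh\rho=t$ and $\cosh\rho\geq1$, we get $\tau^{-d}\cosh(\rho)^{2-d}=t^{-d}\cosh(\rho)^{2}\cdot\cosh(\rho)^{... }$ — again pure exponent bookkeeping; the clean statement $\cosh(\rho)^{2-d}\leq1$ for $d\geq2$ plus $\tau^{-d}=(\tau\cosh\rho)^{-d}\cosh(\rho)^{d}=t^{-d}\cosh(\rho)^d$ is the wrong direction, so I instead keep $\tau^{-d}\cosh(\rho)^{2-d}=(\tau\cosh\rho)^{-d/2}\cdot\tau^{-d/2}\cosh(\rho)^{2-d/2}$ and use $\rho\mapsto\cosh\rho$ monotone with $\tau^{-d/2}\cosh(\rho)^{2-d/2}\lesssim\tau^{-d/2}$ only when $d\geq4$... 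Rather than belabor this, the honest plan is: apply \thref{thm:GlobSob} with $\ell=-1$, obtain $\abs\phi\lesssim\tau^{-d/2}\cosh(\rho)^{(2-d)/2}$, and then note $\tau^{-d/2}\cosh(\rho)^{(2-d)/2}=t^{1-d/2}\cdot(\tau\cosh\rho)^{(d-2)/2}\tau^{-d/2}\cosh(\rho)^{(2-d)/2}\cdot$... the identity $t^{1-d/2}=(\tau\cosh\rho)^{1-d/2}$ combined with $\cosh\rho\geq1$ gives $\tau^{1-d/2}\geq t^{1-d/2}$ for $d\geq2$, which is the wrong inequality, while $\tau^{-d/2}\cosh(\rho)^{(2-d)/2}\leq\tau^{-d/2}$ and we want $\lesssim t^{1-d/2}=\tau^{1-d/2}\cosh(\rho)^{1-d/2}$: since $\cosh\rho\geq1$ and $1-d/2<0$ we have $\cosh(\rho)^{1-d/2}\leq1$ so $t^{1-d/2}\leq\tau^{1-d/2}$, and indeed $\tau^{-d/2}\cosh(\rho)^{(2-d)/2}\leq\tau^{-d/2}\leq\tau^{1-d/2}$ only for $\tau\geq1$ — which holds since by hypothesis we are in the region $t>\abs x$ and (using the compact-support normalization that $\tau\geq$ const, or simply restricting to $\tau\gtrsim1$). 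So the final bound $\abs\phi\lesssim\tau^{-d/2}\lesssim t^{1-d/2}$ follows on $\{\tau\geq1\}$, and the complementary bounded region is handled by local energy estimates and Sobolev embedding.

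\emph{Main obstacle.} The genuine subtlety — beyond the exponent bookkeeping, which is routine — is justifying the application of \thref{lem:hardy:3} to $L^\alpha\phi$ on $\Sigma_\tau$: the Hardy inequality was proved for functions decaying rapidly as $\rho\to\infty$, and one must verify the boundary term at spatial infinity along $\Sigma_\tau$ vanishes. Under compact support this is immediate from finite speed of propagation; without it, one argues by density, approximating $\phi$ by compactly supported data and passing to the limit using the uniform energy bounds, or equivalently checks directly that finiteness of $\sum_\alpha\mathcal{E}_0[L^\alpha\phi]^2$ forces the requisite decay of $\abs{L^\alpha\phi}$ along each hyperboloid. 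This is precisely the point the authors flag when they say they ``omit the obvious proofs,'' and it is the only place where care is needed.
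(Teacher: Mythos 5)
You have the right ingredients (the $\partial_t$-energy bound \eqref{eq:tenergy2}, the Hardy inequality \thref{lem:hardy:3}, and \thref{thm:GlobSob} with $\ell=-1$), which is the paper's intended route, but the decisive quantitative step is wrong. Your ``upshot'' that $\sum_{\abs{\alpha}\leq\sobd(d)}\int_{\Sigma_\tau}\cosh(\rho)^{-1}\abs{L^\alpha\phi}^2\,\mathrm{dvol}_{\Sigma_\tau}$ is bounded \emph{uniformly in $\tau$} by the data is false: \eqref{eq:tenergy2} controls $\tau^{-2}\cosh(\rho)^{-1}\abs{L^iL^\alpha\phi}^2$, so feeding it through \thref{lem:hardy:3} (whose weights match on both sides) gives only
\[
\int_{\Sigma_\tau}\frac{\abs{L^\alpha\phi}^2}{\cosh(\rho)}\,\mathrm{dvol}_{\Sigma_\tau}\;\leq\;\frac{4\tau^2}{(d-2)^2}\,\mathcal{E}_0[L^\alpha\phi]^2 ,
\]
i.e.\ a bound growing like $\tau^2$. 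A uniform-in-$\tau$ bound here is exactly what distinguishes the $K$-energy identity \eqref{eq:Kenergy} (which needs compact support) from the $\partial_t$-energy; asserting it for the $\partial_t$-energy claims a stronger decay rate than the proposition and cannot be correct. Your attempt to absorb the confusion at the end is also invalid: from $\abs{\phi}\lesssim\tau^{-d/2}\cosh(\rho)^{(2-d)/2}$ you conclude ``$\lesssim\tau^{-d/2}\lesssim t^{1-d/2}$ on $\{\tau\geq1\}$'', but $\tau^{-d/2}\leq t^{1-d/2}=\tau^{1-d/2}\cosh(\rho)^{1-d/2}$ requires $\cosh(\rho)^{d/2-1}\leq\tau$, which fails near the light cone where $\rho$ is large and $\tau$ bounded; and the excluded region $\{\tau<1\}$ is unbounded, so it is not handled by ``local energy estimates and Sobolev embedding.'' (The proposition assumes no compact support, so you cannot invoke a normalization $\tau\gtrsim1$ either.)

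The repair is exactly the bookkeeping you set aside as routine, and once done correctly no case split, no compact support, and no restriction on $\tau$ is needed: with the $\tau^2$-loss version of the Hardy step, \thref{thm:GlobSob} with $\ell=-1$ gives $\abs{\phi}^2\lesssim\tau^{-d}\cosh(\rho)^{2-d}\cdot\tau^2\sum_{\abs{\alpha}\leq\sobd(d)}\mathcal{E}_0[L^\alpha\phi]^2=(\tau\cosh\rho)^{2-d}\sum\mathcal{E}_0[L^\alpha\phi]^2=t^{2-d}\sum\mathcal{E}_0[L^\alpha\phi]^2$, which is precisely the claim on the whole interior cone. Two smaller points: your worry that the $\abs{\alpha}=\sobd(d)$ terms force an energy estimate at one extra order is unfounded --- Hardy applied to $L^\alpha\phi$ needs $L^iL^\alpha\phi$ in weighted $L^2(\Sigma_\tau)$, and that is already part of the $\tau$-energy of $L^\alpha\phi$ itself, so the hypothesis $\sum_{\alpha\in\mathfrak{I}^{\leq\sobd(d)}}\mathcal{E}_0[L^\alpha\phi]^2<\infty$ suffices as stated (note it does \emph{not} grant ``all boosted energies''). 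Your ``main obstacle'' paragraph about justifying Hardy for functions without rapid decay in $\rho$ is a legitimate point and is handled by the density/approximation argument you sketch; that part is fine.
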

\begin{prop}[$d \geq 3$, $K$-energy]
	Let $\phi$ solve the linear wave equation on $\Real^{1,d}$ with compactly supported initial data (e.g.\ satisfying the hypotheses of \thref{prop:ImprovedDecay}), then
	\[ \abs{\phi} \lesssim \frac{1}{t^{\frac{d}2 - 1} \sqrt{(t+r)(t-r)}}. \]
\end{prop}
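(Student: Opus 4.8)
The plan is to mirror the structure of the (omitted) proofs of the two preceding propositions, combining the $K$-energy identity \eqref{eq:Kenergy}, the Hardy inequality \thref{lem:hardy:3}, and the global Sobolev inequality \thref{thm:GlobSob}. Since $L^\alpha\phi$ again solves $\Box(L^\alpha\phi)=0$ whenever $\phi$ does, applying \eqref{eq:Kenergy} to $L^\alpha\phi$ and invoking finite speed of propagation exactly as in the proof of \thref{prop:ImprovedDecay} shows that
\[
	\sum_{\alpha \in \mathfrak{I}^{\leq \sobd(d)}} \int_{\Sigma_\tau} \frac{1}{\cosh(\rho)} \sum_{i=1}^d (L^i L^\alpha \phi)^2 ~\mathrm{dvol}_{\Sigma_\tau}
\]
is bounded, uniformly in $\tau$, by the appropriate Sobolev norm of the data on $\{2\}\times\Real^d$; here the compact support is used, just as before, to absorb the weights arising from expressing the $L^i$ in rectangular coordinates and from the lower-order term $(d-1)t\phi$ in the modified current. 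Note that, in contrast to the twisted-derivative bound \eqref{eq:Kptwbd}, no commutator juggling is needed at this stage: the first term on the right-hand side of \eqref{eq:Kenergy} is already exactly the density $\tfrac{1}{2\cosh(\rho)}\sum_i (L^i\phi)^2$.

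Next I would feed this into the Hardy inequality. Applying \thref{lem:hardy:3} to each function $L^\alpha\phi$ on $\Sigma_\tau$ gives $\int_{\Sigma_\tau}\tfrac{1}{\cosh(\rho)}(L^\alpha\phi)^2 \lesssim \int_{\Sigma_\tau}\tfrac{1}{\cosh(\rho)}\sum_i(L^iL^\alpha\phi)^2$, and summing over $\alpha\in\mathfrak{I}^{\leq\sobd(d)}$ together with the previous bound yields that $\sum_\alpha \int_{\Sigma_\tau}\cosh(\rho)^{-1}(L^\alpha\phi)^2~\mathrm{dvol}_{\Sigma_\tau}$ is uniformly controlled by the data. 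This is precisely the right-hand side appearing in \thref{thm:GlobSob} with weight exponent $\ell=-1$. Applying that theorem with $f=\phi$ then produces $\abs{\phi}^2 \lesssim \tau^{-d}\cosh(\rho)^{2-d}$ times the data norm, i.e.\ $\abs{\phi}\lesssim \tau^{-d/2}\cosh(\rho)^{1-d/2}$.

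Finally I would translate back to rectangular coordinates: since $\cosh(\rho)=t/\tau$ and $\tau=\sqrt{(t+r)(t-r)}$, we have $\cosh(\rho)^{1-d/2}=t^{1-d/2}\tau^{d/2-1}$, whence $\tau^{-d/2}\cosh(\rho)^{1-d/2}=t^{1-d/2}\tau^{-1}$, which is exactly the asserted rate $t^{1-\frac{d}{2}}/\sqrt{(t+r)(t-r)}$. There is no genuinely hard step — every ingredient is already in hand — so the only real point of care is the bookkeeping: matching the energy-density weight $\cosh(\rho)^{-1}$ against both the Hardy weight and the $\ell=-1$ slot of \thref{thm:GlobSob}, and (as in \thref{prop:ImprovedDecay}) reducing the initial $K$-energies of the commuted solutions $L^\alpha\phi$ to honest $H^{\sobd(d)}$-type norms of $(\phi_0,\phi_1)$, which is the place where compact support of the data is essential. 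We remark that this argument fails for $d=2$ precisely because \thref{lem:hardy:3} degenerates; running it instead with \thref{lem:hardy:2} would introduce the polynomial-in-$\rho$ (effectively logarithmic) weight, and the corresponding two-dimensional statement is the one taken up in Section~\ref{sect:2dWM}.
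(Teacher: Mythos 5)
Your proposal is correct and is exactly the argument the paper intends (and omits as ``obvious''): apply the $K$-energy identity \eqref{eq:Kenergy} to the commuted solutions $L^\alpha\phi$, use \thref{lem:hardy:3} to convert the $\cosh(\rho)^{-1}$-weighted boost energy into the same weighted $L^2$ bound on $L^\alpha\phi$ itself, and conclude via \thref{thm:GlobSob} with $\ell=-1$, translating $\tau^{-d/2}\cosh(\rho)^{1-d/2}$ back into $t^{1-d/2}/\sqrt{(t+r)(t-r)}$. Your bookkeeping of the weights and of where compact support enters matches the paper's treatment in \thref{prop:ImprovedDecay}, so there is nothing to correct.
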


\begin{rmk}
	We note that the estimates are sharp. The finite $\partial_t$-energy condition is compatible with initial data $\phi(0,x) = 0$ and $\partial_t \phi(0,x) = (1 + r^2)^{-d/4 - \epsilon}$. One can check using the fundamental solution that the solution $\phi(t,0)$ for this data decays like $t^{1 -d/2 - 2\epsilon}$. 

	For the case with the $K$-energy, we note that the rate $t^{-(d-1)/2}$ is exactly the standard $L^1$--$L^\infty$ rate predicted by the fundamental solution (or alternatively stationary phase arguments). This is correlated with the extra spatial weights of the $K$ energy:
	$Q(K,\partial_t) |_{t = 0} = r^2 Q(\partial_t, \partial_t)$. 
\end{rmk}

\begin{rmk}[Kaluza-Klein backgrounds]\label{rmk:KK}
	That estimates for $\phi$ itself is available using \emph{only} the $\partial_t$ energy has important applications. Previously the only control for $\phi$ itself in the vector field method is via the $K$ energy as described in \cite{Klaine2001}. The availability of the $K$ energy however depends on the conformal symmetries of the wave equation: indeed, the vector field $K$ is also known as the ``conformally inverted time translation'' and can be obtained by pushing forward the vector field $\partial_t$ under the Lorentzian Kelvin transform. 
	Equations not exhibiting (an approximate version of) this conformal symmetry cannot be expected to have the same inequalities hold. 
	One place where the $K$ energy is not available is the case of Klein-Gordon equations. However, as was shown originally by Klainerman \cite{Klaine1985}, the $t^{-d/2}$ decay of $\phi$ itself is available in using the $\partial_t$ energy. 
	In our formulation, this observation is due to the fact that, for the Klein-Gordon equation 
	\[ \Box \phi - m^2 \phi = 0\]
	the analogue of the estimate \eqref{eq:tenergy2} takes the form
	\begin{multline*}
		\int_{\Sigma_\tau} \frac{1}{\tau^2 \cosh(\rho)} \sum_{i = 1}^d (L^i \phi)^2 + \frac{1}{\cosh(\rho)} (\partial_t\phi)^2 
		+ \cosh(\rho) m^2 \phi^2 ~\mathrm{dvol}_{\Sigma_\tau} \\
		\leq \int_{\{0\}\times \Real^d} (\partial_t\phi)^2 + \abs{\nabla \phi}^2 + m^2 \phi^2 \D{x}.
	\end{multline*}
	Then \thref{thm:GlobSob} implies that, after commuting with $L^\alpha$, 
	\[ \abs{\phi} \lesssim t^{-d/2} \]
	directly. These type or arguments were also used in \cite{LefMa2015} for handling coupled systems of wave and Klein-Gordon equations. 

	We note here that similar arguments can also be made for linear waves on Kaluza-Klein backgrounds. More precisely, consider the space-time $\Real^{1,d} \times S$ where $S$ is compact and equipped with some Riemannian metric $g_S$. Take the spacetime metric to be the product metric $g = \eta + g_S$. Consider a solution $\phi$ to the linear wave equation $\Box_g \phi = 0$ on this background. Due to the lack of conformal symmetry only the $\partial_t$-energy is available. For convenience denote by $\subD$ the derivatives tangent to $S$, then one can check that the analogue of \eqref{eq:tenergy2} in this case would be
	\begin{multline}
		\int_{\Sigma_\tau\times S}  \frac{1}{\tau^2 \cosh(\rho)} \sum_{i = 1}^d (L^i \phi)^2 + \frac{1}{\cosh(\rho)} (\partial_t\phi)^2 
		+ \cosh(\rho) \abs{\subD \phi}^2 ~\mathrm{dvol}_{\Sigma_\tau \times S} \\
		\leq \int_{\{0\} \times \Real^d \times S} (\partial_t\phi)^2 + \abs{\covD \phi}^2 + \abs{\subD \phi}^2 \D{x}\D{\sigma}.
	\end{multline}
	And the Global Sobolev inequalities then implies the following decay rates in the forward lightcone $\{t > r\}$:
	\begin{subequations}
		\begin{align}
			\abs{\phi} &\lesssim t^{1-d/2} \qquad (\text{only when }d \geq 3)\\
			\abs{L^i \phi} &\lesssim t^{1-d/2} \\
			\abs{\partial_t \phi} &\lesssim t^{1-d/2} (t+r)^{-1/2} (t-r)^{-1/2} \\
			\abs{\subD \phi} &\lesssim t^{-d/2} 
		\end{align}
	\end{subequations}
	which are exactly what one would expect from taking spectral projections on the $S$ component and treating $\phi$ as a sum of solutions to the wave equation and an infinite family of Klein-Gordon equations. 

	We emphasize that classical vector field method, integrating along the hyperplanes $\{t\}\times\Real^d \times S$, can only recover
	\[
		\abs{\partial\phi}, \abs{L^i \partial\phi}, \abs{\subD \partial \phi} \lesssim t^{(1-d)/2} (t-r)^{-1/2} 
	\]
	and in particular cannot see \emph{any} of the improved decay for $\subD \phi$ and its derivatives. 
\end{rmk}

We conclude this section with the decay estimates for $\phi$ when the dimension $d = 2$. For technical reasons our proof only works when $\phi$ has compactly supported initial data, and hence we only state the version available from the $K$ energy. 
\begin{prop}[$d = 2$, $K$-energy]
	Let $\phi$ solve the linear wave equation on $\Real^{1,2}$ with compactly supported initial data (e.g.\ satisfying the hypotheses of \thref{prop:ImprovedDecay}). Then
	\[ \abs{\phi} \lesssim \frac{\ln [(t+r)(t-r)]}{\sqrt{(t+r)(t-r)}}.\]
\end{prop}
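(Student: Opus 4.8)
The plan is to run the same scheme as in \thref{prop:ImprovedDecay}: use the $K$-energy identity \eqref{eq:Kenergy} to control the $L$-derivatives of $\phi$, use the two-dimensional Hardy inequality \thref{lem:hardy:2} to recover $\phi$ itself, and feed the result into the global Sobolev inequality \thref{thm:GlobSob}. The one new wrinkle is that finite speed of propagation must be used to absorb the polynomial weight $\rho^2$ produced by the degenerate Hardy inequality; this is exactly where the logarithm, and the compact-support hypothesis, enters. As in \thref{prop:ImprovedDecay} we work in the region $t > \max(2,\abs{x})$, and the data at $t = 2$ is supported in $B(0,1)$.

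First, since $\sobd(2) = 2$, apply \eqref{eq:Kenergy} to $L^\alpha\phi$ for each $\alpha \in \mathfrak{I}^{\leq 2}$ (each of which again solves the homogeneous wave equation), discard the nonnegative $[K\phi + t\phi]$-term on the right, and bound the data integral on $\{2\}\times\Real^2$ exactly as in the proof of \thref{prop:ImprovedDecay} (using compact support to tame all weights). Writing $C \eqdef \norm[H^2]{\partial_t\phi(2,\text{---})} + \norm[H^3]{\phi(2,\text{---})}$, this yields the uniform-in-$\tau$ estimate
\[
	\sum_{\alpha \in \mathfrak{I}^{\leq 2}} \int_{\Sigma_\tau} \frac{1}{\cosh(\rho)} \sum_{i=1}^2 \abs{L^i L^\alpha \phi}^2 ~\mathrm{dvol}_{\Sigma_\tau} \lesssim C^2 ;
\]
in particular $\int_{\Sigma_\tau}\frac{1}{\cosh(\rho)}\big(\sum_i \abs{L^i\phi}^2 + \sum_{i,j}\abs{L^i L^j\phi}^2\big)~\mathrm{dvol}_{\Sigma_\tau} \lesssim C^2$ uniformly in $\tau$.

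Next treat the zeroth-order term $\phi^2$. By \thref{lem:hardy:2},
\[
	\int_{\Sigma_\tau} \frac{1}{\cosh(\rho)} \phi^2 ~\mathrm{dvol}_{\Sigma_\tau} \lesssim \int_{\Sigma_\tau} \frac{1 + \rho^2}{\cosh(\rho)} \sum_{i=1}^2 \abs{L^i \phi}^2 ~\mathrm{dvol}_{\Sigma_\tau},
\]
and the weight $1 + \rho^2$ here is \emph{not} directly controlled by the $K$-energy. This is where compact support is used: by finite speed of propagation $\phi(t,x)$ vanishes unless $t - r \geq 1$, i.e.\ unless $\tau e^{-\rho} \geq 1$, so on each $\Sigma_\tau$ both $\phi$ and its $L$-derivatives are supported in $\{\rho \leq \ln\tau\}$, where moreover $\tau \geq \sqrt{3}$. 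Hence $1 + \rho^2 \lesssim 1 + (\ln\tau)^2$ on the (common) support, and
\[
	\int_{\Sigma_\tau} \frac{1}{\cosh(\rho)} \phi^2 ~\mathrm{dvol}_{\Sigma_\tau} \lesssim \big(1 + (\ln\tau)^2\big) \int_{\Sigma_\tau} \frac{1}{\cosh(\rho)} \sum_{i=1}^2 \abs{L^i\phi}^2 ~\mathrm{dvol}_{\Sigma_\tau} \lesssim C^2 \big(1 + (\ln\tau)^2\big).
\]
Combining with the previous step, $\sum_{\alpha\in\mathfrak{I}^{\leq 2}}\int_{\Sigma_\tau}\frac{1}{\cosh(\rho)}\abs{L^\alpha\phi}^2~\mathrm{dvol}_{\Sigma_\tau} \lesssim C^2\big(1 + (\ln\tau)^2\big)$.

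Finally apply \thref{thm:GlobSob} to $f = \phi$ with $d = 2$ and weight exponent $\ell = -1$, so that $\cosh(\rho)^{1-d-\ell} = 1$:
\[
	\abs{\phi(\tau,\rho,\theta)}^2 \lesssim \tau^{-2} \sum_{\alpha\in\mathfrak{I}^{\leq 2}} \int_{\Sigma_\tau} \frac{1}{\cosh(\rho)} \abs{L^\alpha \phi}^2 ~\mathrm{dvol}_{\Sigma_\tau} \lesssim C^2\, \tau^{-2}\big(1 + (\ln\tau)^2\big),
\]
so $\abs{\phi} \lesssim \tau^{-1}(1 + \ln\tau)$ on the support of $\phi$ (and $\phi \equiv 0$ off it). Since $\tau^2 = t^2 - r^2 = (t+r)(t-r)$, $\sqrt{(t+r)(t-r)} = \tau \geq \sqrt{3}$, and $\ln[(t+r)(t-r)] = 2\ln\tau$, the right-hand side is $\lesssim \ln[(t+r)(t-r)]/\sqrt{(t+r)(t-r)}$, which is the claim. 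The step I expect to be the crux is the treatment of the Hardy weight: the two-dimensional Hardy inequality unavoidably costs the factor $1 + \rho^2$, whereas the $K$-energy only supplies the \emph{unweighted} quantity $\int_{\Sigma_\tau}\frac{1}{\cosh(\rho)}\sum_i\abs{L^i\phi}^2$; compact support converts this deficit into a harmless $\ln\tau$ because $\rho \leq \ln\tau$ on the support, and without it one would need a $\rho$-weighted (equivalently $\ln t$-weighted) refinement of the $K$-energy estimate.
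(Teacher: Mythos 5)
Your proposal is correct and follows essentially the same route as the paper: bound the $K$-energy of $L^\alpha\phi$ via \eqref{eq:Kenergy}, use finite speed of propagation to convert the Hardy weight through $\rho \leq \ln\tau$ on the support, and conclude with \thref{thm:GlobSob} with $\ell = -1$. Your bookkeeping of the factor $1+(\ln\tau)^2$ is in fact slightly more careful than the paper's displayed $(1+\ln\tau)$, but both give $\abs{\phi} \lesssim \tau^{-1}\ln\tau$, which is the claim.
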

\begin{proof}
	Without loss of generality we shall assume that $\phi$ satisfies the conditions of \thref{prop:ImprovedDecay}, this implies  the quantity in \eqref{eq:Kenergy} is bounded by the initial data. 

	Observe as the data, prescribed at time $t = 2$, is supported in the unit ball, this means by finite speed of propagation, on the support of $\phi$, when $t \geq 2$ it must also satisfy $t \geq r + 1$. Within the forward lightcone from the origin this inequality can be rewritten as
	\begin{equation}\label{eq:logloss}
		\tau(\cosh\rho - \sinh\rho) \geq 1 \iff \tau e^{-\rho} \geq 1 \iff \ln \tau \geq \rho. 
	\end{equation}
	Returning to \eqref{lem:hardy:2} we see that, when $\tau \geq 2$, 
	\begin{align*}
		\int_{\Sigma_\tau} \frac{1}{\cosh(\rho)} \phi^2 ~\mathrm{dvol}_{\Sigma_\tau} & \lesssim \int_{\Sigma_{\tau}} \frac{(1 + \rho^2)}{\cosh(\rho)} \sum (L^i \phi)^2 ~\mathrm{dvol}_{\Sigma_\tau} \\
		& \leq (1 + \ln\tau) \int_{\Sigma_\tau} \frac{1}{\cosh(\rho)} \sum (L^i \phi)^2 ~\mathrm{dvol}_{\Sigma_\tau} \\
		& \leq (1 + \ln\tau) \int_{\Sigma_\tau} \mathcal{K}_a (\partial_\tau)^a ~\mathrm{dvol}_{\Sigma_\tau}.
	\end{align*}
	Here, in the second inequality, we can use \eqref{eq:logloss} thanks to the compact-data assumption.

	Applying this estimate to $\phi$ and $L^\alpha \phi$ as before, we see by \thref{thm:GlobSob} that $\abs{\phi} \lesssim \tau^{-1} \ln(\tau)$ as claimed. 
\end{proof}

\section{Two-dimensional wave-maps} \label{sect:2dWM}

Let us now apply the method developed in the previous sections to study the global existence and decay of solutions to \eqref{eq:WMmain} under the assumption that its initial data has compact support. 
First we state the precise version of our theorem. 

\begin{thm}\label{thm:mainWM}
	Consider the future-evolution governed by \eqref{eq:WMmain} on $\Real^{1,2}$, with initial data prescribed at time $t = 2$ such that $\phi(2,x)$ and $\partial_t\phi(2,x)$ are supported in the ball of radius 1. Let $k \geq 4$ be a positive integer. Then there exists some $\epsilon > 0$ such that if 
		\[ \norm[H^{k+1}]{\phi(2, \text{---})} + \norm[H^{k}]{\partial_t\phi(2,\text{---})} < \epsilon,\]
		then there exists a future-global solution $\phi: [2,\infty)\times\Real^d \to \Real$. Furthermore, this solution obeys the following pointwise decay estimates:
		\begin{align*}
			\abs{\partial_t L^\alpha \phi} &\lesssim \frac{1}{\tau}, & & \abs{\alpha} \leq k-2;\\
			\abs{L^{\alpha} \phi} & \lesssim \frac{1}{\tau}, && 1 \leq \abs{\alpha} \leq k-1;\\
			\abs{\phi} & \lesssim \frac{\ln\tau}{\tau}. 
		\end{align*}
		Here $\tau = \sqrt{t^2 - \abs{x}^2}$ as defined previously.  (Note that by finite speed of propagation, within the support of $\phi$ we have the lower bound $\tau \geq \sqrt{2}$.)
\end{thm}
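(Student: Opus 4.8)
The plan is to run a bootstrap argument on the hyperboloidal foliation, taking the pointwise decay estimates in the theorem statement (together with a companion set of bounded weighted energies) as the bootstrap assumptions, and closing them using the linear estimates of Sections 3--6 applied to the commuted equations, the null structure of the nonlinearity, and the Hardy inequality of \thref{lem:hardy:2}. First I would set up the energy hierarchy: for $\abs{\alpha} \le k$ I would propagate the $\partial_t$-energy of $L^\alpha \phi$ (controlled by \eqref{eq:tenergy2}), and for $\abs{\alpha} \le k-1$ I would propagate the $K$-energy of $L^\alpha\phi$ (controlled by \eqref{eq:Kenergy}, which also controls $\norm[L^2]{(K + (d-1)t)L^\alpha\phi}$ and, via \thref{lem:hardy:2} and the compact-support reduction \eqref{eq:logloss}, the weighted $L^2$ norm of $L^\alpha\phi$ itself with a $\ln\tau$ loss). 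The bootstrap assumptions would be that these energies are bounded by $\epsilon$ up to, say, a $\tau^{\delta}$ loss at top order and no loss below top order; the pointwise statements then follow from \thref{thm:GlobSob} exactly as in \thref{prop:energy:derdecay} and \thref{prop:ImprovedDecay}.

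The core of the argument is estimating the inhomogeneity after commuting \eqref{eq:WMmain} with $L^\alpha$. Schematically $\Box L^\alpha\phi = \sum L^{\alpha_1}\phi \cdot \eta(\D* L^{\alpha_2}\phi, \D* L^{\alpha_3}\phi) + \ldots$, where the sum ranges over $\abs{\alpha_1}+\abs{\alpha_2}+\abs{\alpha_3} \le \abs{\alpha}$ and the boosts distribute by the Leibniz rule (with no extra commutator terms, since $L$ commutes with $\Box$ — this is the notational advantage the introduction advertises). The null form $\eta(\D*\psi,\D*\chi)$ must be decomposed in the hyperboloidal frame: writing $\D*\psi$ in the $(\partial_\tau, \tau^{-1}\partial_\rho, (\tau\sinh\rho)^{-1}\partial_\theta)$ frame, the null structure forces at least one factor to be a "good" derivative, i.e.\ controlled by $\tau^{-1}L^i\psi$ rather than $\partial_\tau\psi$; concretely $\eta(\D*\psi,\D*\chi)$ is a sum of terms each containing a factor of the form $\tau^{-1}L^i(\cdot)$ or $\tau^{-1}(K+(d-1)t)(\cdot)$ (this is the point of the decomposition \eqref{eq:bilinearest} mentioned in the introduction — here I am assuming the existence of such a decomposition as foreshadowed). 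Using \thref{prop:ImprovedDecay} and \eqref{eq:Kptwbd} the good factor decays like $\tau^{-3/2}\cosh(\rho)^{-1/2}$ (for $d=2$), the $\phi$-factor like $\tau^{-1}\ln\tau \cdot \cosh(\rho)^{-1/2}$, and one keeps the remaining high-order factor in $L^2(\Sigma_\tau)$; the resulting time-integral $\int \tau^{-1}(\ln\tau)\tau^{-3/2}\,\D\tau$ converges, so the energies close with at most a $\tau^\delta$ loss at top order, and no loss below. The crucial point where \eqref{eq:Kptwbd} (pointwise decay for the twisted derivative) is used is precisely to keep this top-order argument log-free in the terms where the decomposition \eqref{eq:bilinearest} produces a $(K+(d-1)t)$-factor rather than an $L^i$-factor.

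The main obstacle I expect is the decay of $\phi$ itself and its feedback into the nonlinearity. Because the null form only sees derivatives, it gives no help controlling the bare $\phi$-factor in $\phi\cdot\eta(\D*\phi,\D*\phi)$; one must instead use \thref{prop:ImprovedDecay}-type $K$-energy control together with \thref{lem:hardy:2}, which unavoidably costs a $\ln\tau$ (via \eqref{eq:logloss}, hence the essential use of compact support). One must then check that this $\ln\tau$ loss, which appears linearly in the pointwise bound for $\phi$ and hence in the source, does not destroy the convergence of the energy integral — it does not, since it is beaten by the $\tau^{-3/2}$ from the good derivative. A secondary technical point is bookkeeping the $\cosh(\rho)$ weights: the global Sobolev inequality \thref{thm:GlobSob} has a tunable weight $\cosh(\rho)^\ell$, and one must choose $\ell$ consistently across the energy hierarchy so that the weights in the pointwise estimates for the factors match the weights available in the energy densities \eqref{eq:tenergy} and \eqref{eq:Kenergy}; here the compact-support restriction $\rho \le \ln\tau$ again gives slack. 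Finally, local existence plus a continuity/continuation argument upgrades the a priori estimates to a genuine global solution, and the decay to zero as $\tau\to\infty$ is immediate from the pointwise bounds.
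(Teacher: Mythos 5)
Your route is essentially the paper's: hyperboloidal $\partial_t$- and $K$-energies for boost-commuted quantities, a frame decomposition of the null form featuring the twisted factor $(K+t)\psi$, the pointwise bound \eqref{eq:Kptwbd} for the borderline configuration, the log-weighted Hardy inequality of \thref{lem:hardy:2} together with compact support for the bare $\phi$, and a continuity argument. The genuine problem is your energy hierarchy. The paper propagates \emph{both} $\mathcal{E}$ and $\mathcal{F}$ at all orders $\leq k$ and closes them \emph{uniformly bounded} (the sources in \eqref{eq:EEE} and \eqref{eq:EEF} are $(\ln\tau)^2\tau^{-3}$ and $(\ln\tau)^2\tau^{-2}$, both integrable), whereas you stop the $K$-energy at order $k-1$ and hedge with a $\tau^\delta$ loss at the top. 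The top-order configuration that breaks your scheme is not the one where all boosts land on the undifferentiated prefactor (that one is handled, as you say, by \eqref{eq:Kptwbd} via \thref{lem:borderline}, and it only needs $\mathcal{F}$ at order $k-1$), but the one where all $k$ boosts land \emph{inside} the null form, e.g.\ $\phi\,\eta(\D* L^\alpha\phi,\D*\phi)$ tested against $\partial_t L^\alpha\phi$ with $\abs{\alpha}=k$. There the top-order differentiated factor must be placed in weighted $L^2$, and the decomposition \eqref{eq:bilinearest} calls for exactly the quantities $\cosh(\rho)^{-1/2}\,L L^\alpha\phi$ and $(t\tau)^{-1/2}(K+t)L^\alpha\phi$ that only $\mathcal{F}$ at order $k$ controls; substituting the $\partial_t$-energy gives only $\tau^{-1}\cosh(\rho)^{-1/2} L L^\alpha\phi$, you lose a factor of $\tau$, and the resulting source is of size $\tau^{-1}$ (up to logarithms), which is not integrable. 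So with your hierarchy the top-order energy genuinely grows --- this is what your $\tau^\delta$ silently absorbs --- contradicting your claim that \eqref{eq:Kptwbd} keeps the top order log-free; and since the pointwise bounds of \thref{thm:GlobSob} cost $\sobd(2)=2$ orders, the stated rate for $\abs{\partial_t L^\alpha\phi}$ at $\abs{\alpha}=k-2$ degrades and the range $1\leq\abs{\alpha}\leq k-1$ for $\abs{L^\alpha\phi}\lesssim\tau^{-1}$ (which by \eqref{eq:GSlp} needs $\mathcal{F}_{\leq k}$) is out of reach altogether. The fix is simply to carry $\mathcal{F}$ at order $k$ as well; then the bootstrap closes with no loss at any order and the decay statements follow exactly as claimed.

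A smaller inaccuracy: the decomposition \eqref{eq:bilinearest} does \emph{not} put a good-derivative factor in every term. Rewriting $K\psi=(K+t)\psi-t\psi$ produces the terms $t^{-1}\abs{\psi}\abs{\partial_t\phi}$ and $\tau^{-2}\abs{\psi}\abs{L\phi}$ containing the \emph{bare} factor $\psi$, so the log-weighted Hardy inequality and the $\ln(\tau)\tau^{-1}$ pointwise bound are needed inside the null-form estimates themselves, not only for the external prefactor in $\phi\cdot\eta(\D*\phi,\D*\phi)$. Your bookkeeping of the logarithms survives this (the worst source is still $(\ln\tau)^2\tau^{-2}$), but the decomposition should be stated and used in this corrected form.
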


As the equation is semilinear, local existence theory using energy method is standard. In particular, for sufficiently small initial data it is clear that the solution exists at least up to, and including $\{\tau = 2\}$ (here we also use finite speed of propagation and boundedness of initial support). 
We concentrate on obtaining a priori energy bounds when $\tau \geq 2$. 
Define
\begin{gather}
	\mathcal{E}_\tau[\phi]^2 \eqdef \int_{\Sigma_\tau} \frac{1}{\tau^2 \cosh(\rho)} \sum_{i = 1}^2 (L^i\phi)^2 + \frac{1}{\cosh(\rho)} (\partial_t \phi)^2 ~\mathrm{dvol}_{\Sigma_\tau};\\
	\mathcal{F}_\tau[\phi]^2 \eqdef \int_{\Sigma_\tau} \frac{1}{\cosh(\rho)} \sum_{i = 1}^2 (L^i \phi)^2 + \frac{1}{\tau^2 \cosh(\rho)} [K\phi + t\phi]^2 ~\mathrm{dvol}_{\Sigma_\tau}.
\end{gather}
We have the following energy identities for $\tau_1 > \tau_0 > 2$
\begin{gather}
	\label{eq:EenergyEst} \mathcal{E}_{\tau_1}[\phi]^2  \leq \mathcal{E}_{\tau_0}[\phi]^2 + 2 \int_{\tau_0}^{\tau_1}  \abs{\Box \phi \cdot (\partial_t \phi)} ~\mathrm{dvol}_{\Sigma_{\tau}} \D{\tau}\\
	\label{eq:FenergyEst} \mathcal{F}_{\tau_1}[\phi]^2  \leq \mathcal{F}_{\tau_0}[\phi]^2 + 
	2\int_{\tau_0}^{\tau_1} \int_{\Sigma_\tau} \abs{\Box \phi \cdot (K\phi + t\phi)} ~\mathrm{dvol}_{\Sigma_{\tau}} \D{\tau}
\end{gather}
where we used that the space-time volume-element is exactly $\mathrm{dvol}_{\Sigma_{\tau}} \D{\tau}$ as seen in the metric decomposition 
\eqref{eq:metricNew}.

The basic strategy is standard: we wish to estimate $\mathcal{E}[L^\alpha \phi]$ and $\mathcal{F}[L^\alpha \phi]$ for all $\abs{\alpha}$ less than some fixed constant. 
This will require estimating integrals of the forms
\begin{subequations}
	\begin{equation}\label{eq:bulkintt}
	\int_{\Sigma_{\tau}} L^{\alpha_1} \phi \cdot \eta( \D* L^{\alpha_2} \phi, \D* L^{\alpha_3} \phi)\cdot \partial_t L^{\alpha}\phi ~\mathrm{dvol}_{\Sigma_\tau} 
\end{equation}
and
	\begin{equation}\label{eq:bulkintK}
	\int_{\Sigma_{\tau}} L^{\alpha_1} \phi \cdot\eta( \D*L^{\alpha_2} \phi, \D* L^{\alpha_3} \phi) \cdot (K+t)L^{\alpha}\phi ~\mathrm{dvol}_{\Sigma_\tau} 
\end{equation}
\end{subequations}
for $\abs{\alpha_1} + \abs{\alpha_2} + \abs{\alpha_3} = \abs{\alpha}$. In writing down the integrals above we implicitly used that vector fields acts on scalars by Lie differentiation, and $\eta$ is invariant under Lorentz boosts $L^i$, and that exterior differentiation commutes with Lie differentiation, so that if $\phi$ solves \eqref{eq:WMmain}, then $L^\alpha\phi$ solves an equation of the form 
\begin{equation}
	\Box L^\alpha \phi = \sum c_{\alpha_1, \alpha_2, \alpha_3; \alpha} L^{\alpha_1} \phi \eta(\D* L^{\alpha_2}\phi, \D* L^{\alpha_3}\phi)
\end{equation}
where $\abs{\alpha_1} + \abs{\alpha_2} + \abs{\alpha_3} = \abs{\alpha}$, and $c_{\alpha_1, \alpha_2, \alpha_3; \alpha}$ are combinatorial constants. 

Therefore we are led to consider integrals of the form 
\[ \int_{\Sigma_\tau} \zeta \eta(\D*\psi, \D*\phi) \partial_t \xi ~\mathrm{dvol}_{\Sigma_{\tau}} ~~ \text{ and } ~~ \int_{\Sigma_\tau} \zeta \eta(\D*\psi, \D*\phi) (K\xi + t \xi) ~\mathrm{dvol}_{\Sigma_{\tau}}\]
where the functions $\zeta, \psi, \phi, \xi$ stand in place of $L^\alpha$ derivatives of the original unknown. 
To get good control, we begin by decomposing the form
\[ \eta(\D*\psi, \D*\phi) = (h_\tau)^{-1}(\D*\psi, \D*\phi) - (\partial_{\tau} \psi)(\partial_{\tau}\phi). \]
Using that
\begin{align*}
	\partial_\tau &= \frac{1}{t\tau} K - \frac{r}{t\tau} \partial_\rho \\
		& = \frac{\tau}{t} \partial_t + \frac{r}{t\tau} \partial_\rho
\end{align*}
we have
\[ \eta(\D*\psi, \D*\phi) = (h_\tau)^{-1}(\D*\psi,\D*\phi) - \big[ \frac{1}{t\tau} K\psi - \frac{r}{t\tau} \partial_\rho \psi \big] \big[ \frac{\tau}{t} \partial_t \phi + \frac{r}{t\tau} \partial_\rho \phi \big]. \]
Introducing the short hand $\abs{L\phi} \eqdef \big[ \sum_{i = 1}^2 (L^i \phi)^2 \big]^{\frac12}$ we can bound
\begin{multline*}
	\abs{\eta(\D*\psi, \D*\phi)} \lesssim \frac{1}{\tau^2} \abs{L\psi} \abs{L\phi}\\
		+\frac{1}{t^2} \abs{ K\psi \partial_t\phi} + \frac{r}{t^2} \abs{\partial_\rho \psi \partial_t \phi} 
		+ \frac{r}{t^2 \tau^2} \abs{K\psi \partial_\rho \phi} + \frac{r^2}{t^2 \tau^2} (\partial_\rho \phi) (\partial_\rho \psi).
\end{multline*}
This we can bound by 
\begin{multline}\label{eq:bilinearest}
	\abs{\eta(\D*\psi, \D*\phi)} \lesssim \frac{1}{\tau^2} \abs{L\psi} \abs{L\phi}
		+ \frac{1}{t} \abs{\partial_t\phi} \abs{L\psi} \\
		+\frac{1}{t^2} \abs{ (K+t)\psi} \abs{\partial_t\phi} + \frac1{t} \abs{\psi} \abs{\partial_t \phi}
		+ \frac{1}{t \tau^2} \abs{(K+ t)\psi}\abs{L \phi} + \frac{1}{\tau^2} \abs{\psi} \abs{L\phi}
\end{multline}
where we simplified using $r / t \leq 1$.

\begin{rmk}
	We use \eqref{eq:bilinearest} rather than the more common decomposition 
	\[ \abs{\eta(\D*\psi,\D*\phi)} \lesssim \abs{\partial_t\psi}\abs{\partial_t\phi} + \frac{1}{t^2} \abs{L\psi}\abs{L\phi} \]
	to take full advantage of both the $L^2$ and $L^\infty$ controls we have on $(K + t)\psi$, which gives us additional decay.  
\end{rmk}

With this estimate, we can first control the nonlinearity when the quantity $\alpha_1$ in \eqref{eq:bulkintt} and \eqref{eq:bulkintK} has order $\abs{\alpha_1} \leq \abs{\alpha} - 1$.

\begin{lem}[Estimates for $\abs{\alpha_1} \leq \abs{\alpha} - 1$]\label{lem:nonborder}
	We have the estimates (the $L^\infty$ norms are taken along $\Sigma_\tau$)
	\begin{gather*}
		\int_{\Sigma_\tau} \zeta \eta(\D*\psi, \D*\phi) \partial_t \xi ~\mathrm{dvol}_{\Sigma_{\tau}} \lesssim 
			\frac{\ln \tau}{\tau} \norm[L^\infty]{\zeta} \big[ \norm[L^\infty]{L\phi} + \norm[L^\infty]{\partial_t\phi} \big]  \mathcal{F}_\tau[\psi]\mathcal{E}_\tau[\xi], \\
		\int_{\Sigma_\tau} \zeta \eta(\D*\psi, \D*\phi) (K\xi + t \xi) ~\mathrm{dvol}_{\Sigma_{\tau}} \lesssim 
			\ln\tau \norm[L^\infty]{\zeta} \big[\norm[L^\infty]{L\phi} + \norm[L^\infty]{\partial_t\phi} \big] \mathcal{F}_\tau[\psi] \mathcal{F}_\tau[\xi].
	\end{gather*}
\end{lem}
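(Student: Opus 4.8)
The plan is to reduce both bulk integrals to a finite sum of model integrals by inserting the pointwise bilinear bound \eqref{eq:bilinearest}, and then to dispatch each model integral by a Cauchy--Schwarz argument that places one factor of the nonlinearity in $L^\infty$ and matches the remaining two factors against the weights built into $\mathcal{E}_\tau$ and $\mathcal{F}_\tau$.

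\textbf{Step 1 (reduction).} First I would pull $\zeta$ out in $\norm[L^\infty]{\cdot}$ along $\Sigma_\tau$ and bound $\abs{\eta(\D*\psi,\D*\phi)}$ pointwise by the six terms on the right of \eqref{eq:bilinearest}. This leaves integrals of the form $\int_{\Sigma_\tau} W(\tau,\rho)\,A_\psi\,A_\phi\,A_\xi ~\mathrm{dvol}_{\Sigma_\tau}$, with $A_\phi \in \{\abs{L\phi},\abs{\partial_t\phi}\}$, $A_\psi \in \{\abs{L\psi},\abs{(K+t)\psi},\abs{\psi}\}$, $A_\xi = \abs{\partial_t\xi}$ for the first estimate and $A_\xi = \abs{(K+t)\xi}$ for the second, and $W$ one of $\tau^{-2},\, t^{-1},\, t^{-2},\, (t\tau^2)^{-1}$. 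In each I would put $A_\phi$ in $L^\infty$, producing the factor $\norm[L^\infty]{L\phi}+\norm[L^\infty]{\partial_t\phi}$, and apply Cauchy--Schwarz to the pair $(A_\psi, A_\xi)$.

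\textbf{Step 2 (weights and compact support).} The weight matching is forced by the definitions: $\mathcal{F}_\tau[\psi]^2$ dominates $\int\frac{1}{\cosh\rho}\abs{L\psi}^2$ and $\int\frac{1}{\tau^2\cosh\rho}[(K+t)\psi]^2$, while $\mathcal{E}_\tau[\xi]^2$ dominates $\int\frac{1}{\cosh\rho}(\partial_t\xi)^2$. Two uses of the compact-support hypothesis enter. On the support of $\phi$ one has $\rho \le \ln\tau$ (this is \eqref{eq:logloss}), so \thref{lem:hardy:2} gives $\int_{\Sigma_\tau}\frac{1}{\cosh\rho}\psi^2 ~\mathrm{dvol}_{\Sigma_\tau} \lesssim (\ln\tau)^2\,\mathcal{F}_\tau[\psi]^2$ for $\tau\ge2$; and the same support bound gives $\cosh\rho \lesssim \tau$, which together with $t = \tau\cosh\rho \ge \tau$ and $r/t\le1$ lets every residual $(\tau,\rho)$-weight be dominated by a power of $\tau$ alone.

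\textbf{Step 3 (term-by-term, then sum).} For the four terms in which $A_\psi$ is a genuine derivative --- $\abs{L\psi}$ with $W\in\{\tau^{-2},t^{-1}\}$ and $\abs{(K+t)\psi}$ with $W\in\{t^{-2},(t\tau^2)^{-1}\}$ --- the Cauchy--Schwarz split yields $\mathcal{F}_\tau[\psi]\,\mathcal{E}_\tau[\xi]$ multiplied by a residual weight $\lesssim\tau^{-1}$ (in fact $\lesssim\tau^{-2}$ for the last). For the two terms carrying the undifferentiated $\abs{\psi}$ --- $W=t^{-1}$ against $\abs{\partial_t\phi}$ and $W=\tau^{-2}$ against $\abs{L\phi}$ --- I would invoke the Hardy bound of Step 2, which costs a factor $\ln\tau$, so these contribute $\frac{\ln\tau}{\tau}\,\mathcal{F}_\tau[\psi]\,\mathcal{E}_\tau[\xi]$. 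Adding the six contributions and using $\tau\ge2$ gives the first inequality. For the second, $A_\xi=\abs{(K+t)\xi}$ must now be paired against the weaker weight $\frac{1}{\tau^2\cosh\rho}$ that $\mathcal{F}_\tau[\xi]^2$ controls, i.e.\ $\bigl(\int\frac{1}{\cosh\rho}[(K+t)\xi]^2\bigr)^{1/2}\le\tau\,\mathcal{F}_\tau[\xi]$; this multiplies each of the above bounds by exactly one factor of $\tau$, turning the four ``good'' terms into $\lesssim\mathcal{F}_\tau[\psi]\mathcal{F}_\tau[\xi]$ and the two $\psi$-terms into $\lesssim\ln\tau\,\mathcal{F}_\tau[\psi]\mathcal{F}_\tau[\xi]$, which is the claimed bound.

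\textbf{Where the difficulty lies.} Each computation is routine on its own; the delicate point is that the decomposition \eqref{eq:bilinearest} must be sharp enough that, after paying the extra power of $\tau$ forced by the $K$-energy pairing, no \emph{positive} power of $\tau$ remains --- only a harmless $\ln\tau$. This is precisely why one uses \eqref{eq:bilinearest} rather than the cruder $\abs{\eta(\D*\psi,\D*\phi)}\lesssim\abs{\partial_t\psi}\abs{\partial_t\phi}+\tau^{-2}\abs{L\psi}\abs{L\phi}$: expressing $\partial_\tau\psi$ through $(K+t)\psi$, for which both $L^2$ and pointwise control are available, is what keeps the $K$-energy estimate at most logarithmically lossy. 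A secondary subtlety is that compact support is used in two genuinely different ways --- via \thref{lem:hardy:2} to handle the bare $\psi$ (the source of $\ln\tau$), and via $\cosh\rho\lesssim\tau$ to tame the geometric weights --- and both are indispensable.
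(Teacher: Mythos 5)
Your proposal is correct and follows essentially the same route as the paper's proof: decompose via \eqref{eq:bilinearest}, place $\zeta$ and the $\phi$-factors in $L^\infty$, match $\psi$ and $\xi$ against the weights in $\mathcal{F}_\tau$ and $\mathcal{E}_\tau$ (equivalently the paper's $\sqrt{t\tau}$ bookkeeping, since $\sqrt{t\tau}=\tau\sqrt{\cosh\rho}$), use \thref{lem:hardy:2} with $\rho\le\ln\tau$ for the bare $\psi$, and use $\cosh\rho\le\tau$ from finite speed of propagation for the remaining weights. Your term-by-term residual weights check out, so no gap.
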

\begin{proof}
	The proof of the two cases are similar, we focus on the harder case which is the second inequality. The basic idea is to put all the terms involving $\xi$ and $\psi$ in weighted $L^2$ (controlled by $\mathcal{E}[\xi], \mathcal{F}[\xi], \mathcal{F}[\psi]$), and the remainder ($\zeta$ and $\phi$) in $L^\infty$. 
	At parts of the argument we will also use the fact that by our finite speed of propagation property, $\cosh(\rho) \leq e^\rho \leq \tau$ and hence $\frac{t}{\tau^2} \leq 1$.
	
	Observe that the $\mathcal{F}[\xi]^2$ controls the square integral of $(K \xi + t \xi) / \sqrt{t \tau}$. So it suffices to show that $\sqrt{t\tau} \eta(\D*\psi, \D*\phi)$ is square integrable on $\Sigma_\tau$. Observe now that $\mathcal{F}[\psi]^2$ controls the square integral of $\abs{L\psi} / \sqrt{\cosh(\rho)}$, $(K\psi + t\psi)/ \sqrt{t\tau}$ and $\abs{\psi}/[\ln(\tau) \sqrt{\cosh(\rho)}]$ (the last through \thref{lem:hardy:2} and finite speed of propagation). 
	So we can check each term that appears on the right of \eqref{eq:bilinearest}:
	\begin{align*}
		\frac{\sqrt{t\tau}}{\tau^2} \abs{L\psi} \abs{L\phi} & = \frac{\cosh\rho}{\tau} \abs{L\phi} \cdot \frac{1}{\sqrt{\cosh\rho}} \abs{L\psi} \\
		\frac{\sqrt{t\tau}}{t} \abs{L\psi} \abs{\partial_t\phi} & = \abs{\partial_t\phi} \cdot \frac{1}{\sqrt{\cosh\rho}} \abs{L\psi}\\
		\frac{\sqrt{t\tau}}{t^2} \abs{(K+t)\psi} \abs{\partial_t \phi} & =  \frac{\tau}{t} \abs{\partial_t\phi} \cdot \frac{1}{\sqrt{t\tau}} \abs{(K+t)\psi}\\
		\frac{\sqrt{t\tau}}{t} \abs{\psi} \abs{\partial_t \phi} & =  \ln(\tau) \abs{\partial_t\phi}  \cdot \frac{1}{\ln(\tau) \sqrt{\cosh\rho}} \abs{\psi}\\
		\frac{\sqrt{t\tau}}{t\tau^2} \abs{(K+t)\psi} \abs{L\phi} & =  \frac{1}{\tau} \abs{L\phi} \cdot \frac{1}{\sqrt{t\tau}} \abs{(K+t)\psi}\\
		\frac{\sqrt{t\tau}}{\tau^2} \abs{\psi} \abs{L\phi} & = \frac{t \ln \tau}{\tau^2} \abs{L\phi} \cdot \frac{1}{\ln(\tau) \sqrt{\cosh\rho}} \abs{\psi}
	\end{align*}
	Using that $t/\tau^2 \leq 1$ by finite speed of propagation we see our claim is proved. 
\end{proof}

For taking care of the terms where $\abs{\alpha_1} = \abs{\alpha}$, we will place the $\zeta$ term in $L^2$, and $(K + t)\psi$ in $L^\infty$. 
The corresponding integral estimates are
\begin{lem}[Estimates for $|\alpha_1| = |\alpha|$]\label{lem:borderline}
	We have the estimates (the $L^\infty$ norms are taken along $\Sigma_\tau$)
	\begin{align*}
		\int_{\Sigma_\tau} & (L\zeta) \eta(\D*\psi, \D*\phi) \partial_t \xi ~ \mathrm{dvol}_{\Sigma_{\tau}} \lesssim \\
										  & \qquad \frac1\tau \big[ \frac1t \norm[L^\infty]{K \psi + t\psi} + \norm[L^\infty]{L\psi} + \norm[L^\infty]{\psi} \big] \cdot \big[ \norm[L^\infty]{\partial_t\phi} + \norm[L^\infty]{L\phi} \big] \mathcal{F}[\zeta] \mathcal{E}[\xi],\\
		\int_{\Sigma_\tau} & (L\zeta) \eta(\D*\psi, \D*\phi) (K\xi + t \xi) ~ \mathrm{dvol}_{\Sigma_{\tau}} \lesssim \\
			& \qquad \big[ \frac1t \norm[L^\infty]{K \psi + t\psi} + \norm[L^\infty]{L\psi} + \norm[L^\infty]{\psi} \big] \cdot \big[ \norm[L^\infty]{\partial_t\phi} + \norm[L^\infty]{L\phi} \big] \big] \mathcal{F}[\zeta] \mathcal{F}[\xi].
	\end{align*}
\end{lem}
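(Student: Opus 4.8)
The plan is to reduce both inequalities to one Cauchy--Schwarz split, in which the single genuinely top-order factor --- the boost derivative $L\zeta$ --- is measured in the weighted $L^2$ norm on which $\mathcal{F}_\tau$ is coercive, the factor of $\xi$ is measured in the $L^2$ norm controlled by $\mathcal{E}_\tau$ (resp.\ $\mathcal{F}_\tau$), and the remaining bilinear form $\eta(\D*\psi,\D*\phi)$ is pulled out in $L^\infty$ and controlled using \eqref{eq:bilinearest}. Since $\mathcal{F}_\tau[\zeta]^2$ dominates $\int_{\Sigma_\tau}\frac{1}{\cosh\rho}\abs{L\zeta}^2$, while $\mathcal{E}_\tau[\xi]^2$ dominates $\int_{\Sigma_\tau}\frac{1}{\cosh\rho}(\partial_t\xi)^2$ and $\mathcal{F}_\tau[\xi]^2$ dominates $\int_{\Sigma_\tau}\frac{1}{\tau^2\cosh\rho}(K\xi+t\xi)^2$, I would write the two integrands respectively as
\[ \frac{L\zeta}{\sqrt{\cosh\rho}}\cdot\big(\cosh\rho\,\eta(\D*\psi,\D*\phi)\big)\cdot\frac{\partial_t\xi}{\sqrt{\cosh\rho}}, \qquad
\frac{L\zeta}{\sqrt{\cosh\rho}}\cdot\big(\tau\cosh\rho\,\eta(\D*\psi,\D*\phi)\big)\cdot\frac{K\xi+t\xi}{\tau\sqrt{\cosh\rho}}, \]
so that Cauchy--Schwarz on $\Sigma_\tau$ bounds the first integral by $\mathcal{F}_\tau[\zeta]\,\mathcal{E}_\tau[\xi]$ times the $L^\infty(\Sigma_\tau)$-norm of $\cosh\rho\,\eta(\D*\psi,\D*\phi)$, and the second by $\mathcal{F}_\tau[\zeta]\,\mathcal{F}_\tau[\xi]$ times the $L^\infty(\Sigma_\tau)$-norm of $\tau\cosh\rho\,\eta(\D*\psi,\D*\phi)$. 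Everything then reduces to the pointwise bound
\[ \cosh\rho\,\abs{\eta(\D*\psi,\D*\phi)} \lesssim \frac1\tau\Big[\frac1t\norm[L^\infty]{K\psi+t\psi}+\norm[L^\infty]{L\psi}+\norm[L^\infty]{\psi}\Big]\Big[\norm[L^\infty]{\partial_t\phi}+\norm[L^\infty]{L\phi}\Big] \]
on $\Sigma_\tau$, from which the second inequality of the lemma follows by the extra factor of $\tau$ (which cancels the $\frac1\tau$).

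For the pointwise bound I would multiply \eqref{eq:bilinearest} by $\cosh\rho$ and substitute $t=\tau\cosh\rho$; this converts the six coefficients $\frac1{\tau^2},\frac1t,\frac1{t^2},\frac1t,\frac1{t\tau^2},\frac1{\tau^2}$ into $\frac{t}{\tau^3},\frac1\tau,\frac1{t\tau},\frac1\tau,\frac1{\tau^3},\frac{t}{\tau^3}$. The terms with coefficients $\frac1\tau$ and $\frac1{t\tau}$ are already of the claimed form --- note $\frac1{t\tau}\abs{(K+t)\psi} = \frac1\tau\cdot\frac1t\abs{(K+t)\psi}$, which is exactly why the factor $\frac1t$ must sit on the $K\psi+t\psi$ norm and nowhere else --- while the three ``unbalanced'' terms, with coefficients $\frac{t}{\tau^3}$ or $\frac1{\tau^3}$, are absorbed using the finite-speed-of-propagation inequality $\cosh\rho\le e^\rho\le\tau$, equivalently $t\le\tau^2$, in the same way it is used in the proof of \thref{lem:nonborder}: this gives $\frac{t}{\tau^3}\le\frac1\tau$ and $\frac1{\tau^3}\le\frac1\tau\cdot\frac1t$. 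Collecting the six resulting contributions reproduces exactly the six products obtained by expanding the product of brackets on the right-hand side above, so the pointwise bound --- and hence the lemma --- follows.

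I expect no real analytic difficulty here; the work is bookkeeping, and the two places deserving attention are the following. First, one must check that the weight left attached to each $\zeta$- and $\xi$-factor is precisely the weight built into $\mathcal{E}_\tau$ and $\mathcal{F}_\tau$, with no stray power of $\tau$ or $\cosh\rho$ --- this is what forces the symmetric splitting of the $\cosh\rho$ weight in the $\partial_t\xi$ case and the carrying of a full $\tau\cosh\rho$ in the twisted-derivative case. Second, and conceptually more important, one should note that \emph{no logarithmic factor is produced}: in contrast with \thref{lem:nonborder}, where $\psi$ is placed in a weighted $L^2$ space and the Hardy inequality \thref{lem:hardy:2} costs a $\ln\tau$, here both $\psi$ and $\phi$ are carried in $L^\infty$ so that \thref{lem:hardy:2} is never invoked on them; it is this logarithm-free estimate at $\abs{\alpha_1}=\abs{\alpha}$ that lets one avoid logarithmic growth of the top-order energies. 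Keeping track of which terms of \eqref{eq:bilinearest} genuinely require $t\le\tau^2$, and recalling that the compact-support hypothesis of \thref{thm:mainWM} is what puts us in the regime $\cosh\rho\le\tau$, is the one spot where a slip would be easy.
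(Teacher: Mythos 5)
Your argument is correct and is essentially the paper's own proof, only with the bookkeeping written out: the same Cauchy--Schwarz split placing $L\zeta/\sqrt{\cosh\rho}$ in $L^2$ (via $\mathcal{F}_\tau[\zeta]$), the $\xi$-factor in the $L^2$ norm controlled by $\mathcal{E}_\tau[\xi]$ or $\mathcal{F}_\tau[\xi]$, and $t\,\eta(\D*\psi,\D*\phi)$ in $L^\infty$, estimated term by term from \eqref{eq:bilinearest} using the finite-speed-of-propagation bound $\cosh\rho\leq\tau$ (equivalently $t\leq\tau^2$). Your added observations --- the exact weight matching and the absence of any logarithmic loss since \thref{lem:hardy:2} is never invoked here --- are accurate and consistent with the paper's intent.
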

\begin{proof}
	Again we focus on the second inequality, the first is similar. In this situation we must put $L\zeta$ in $L^2$, and both $\phi$ and $\psi$ in $L^\infty$. The square integral of $(L\zeta) / \sqrt{\cosh\rho}$ is controlled by $\mathcal{F}_\tau[\zeta]^2$, so it suffices to compute the $L^\infty$ estimates of $t \eta(\D*\psi, \D*\phi)$. The results follow straightforwardly from \eqref{eq:bilinearest}, after using again that $\cosh(\rho) \leq \tau$ from finite speed of propagation. 
\end{proof}
 
Next let us introduce the following shorthand notations:
\begin{gather*}
	\mathcal{E}_{\tau_0, \tau_1, k} = \sup_{\tau\in [\tau_0,\tau_1]} \sum_{\alpha \in \mathfrak{I}^k} \mathcal{E}_\tau[L^\alpha \phi] \\
	\mathcal{F}_{\tau_0, \tau_1, k} = \sup_{\tau\in [\tau_0,\tau_1]} \sum_{\alpha \in \mathfrak{I}^k} \mathcal{F}_\tau[L^\alpha \phi] \\
	\mathcal{E}_{\tau_0, \tau_1, \leq k} = \sum_{j = 0}^k \mathcal{E}_{\tau_0, \tau_1, j} \\
	\mathcal{F}_{\tau_0, \tau_1, \leq k} = \sum_{j = 0}^k \mathcal{F}_{\tau_0, \tau_1, j}
\end{gather*}
The global Sobolev inequality \thref{thm:GlobSob} states, when $d = 2$, that, for $\tau \in [\tau_0, \tau_1]$ and $\psi = L^\alpha \phi$ with $\abs{\alpha} = k$:
\begin{subequations}
\begin{gather}
	\label{eq:GStp} \abs{\partial_t \psi} \lesssim \tau^{-1} \mathcal{E}_{\tau_0, \tau_1, \leq k+2} \\
	\label{eq:GSlp}\abs{L\psi} \lesssim \tau^{-1} \mathcal{F}_{\tau_0, \tau_1, \leq k+2} \\
	\label{eq:GSkp}\abs{K\psi + t \psi} \lesssim \mathcal{F}_{\tau_0, \tau_1, \leq k + 2} \\
	\label{eq:GSp}\abs{\psi} \lesssim \ln(\tau) \tau^{-1} \mathcal{F}_{\tau_0, \tau_1, \leq k+1}
\end{gather}
\end{subequations}
where in the last estimate \eqref{eq:GSp} we used the compact support assumption,  and in the second to last estimate \eqref{eq:GSkp} we used the commutator estimate \eqref{eq:Kcommu}.
 
Applying the global Sobolev inequality results above to \thref{lem:nonborder} and \thref{lem:borderline}, we obtain (here interpreting $(\zeta,\psi,\varphi,\xi) = (L^{\alpha}\phi, L^{\beta}\phi, L^{\gamma}\phi, L^\delta\phi)$)
\begin{align*}
	\int_{\Sigma_\tau} \zeta \eta(\D*\psi, \D*\varphi) \partial_t \xi & ~\mathrm{dvol}_{\Sigma_{\tau}} \lesssim \\
									  &	\frac{(\ln \tau)^2}{\tau^3} \mathcal{F}_{\tau_0,\tau,\leq\abs{\alpha}+1}\big[ \mathcal{E}_{\tau_0,\tau,\leq\abs{\gamma}+2} + \mathcal{F}_{\tau_0,\tau,\leq\abs{\gamma}+2} \big]  \mathcal{F}_{\tau_0,\tau,\abs{\beta}} \mathcal{E}_{\tau_0,\tau,\abs{\delta}} \\
	\int_{\Sigma_\tau} \zeta \eta(\D*\psi, \D*\varphi) (K\xi + t \xi)& ~\mathrm{dvol}_{\Sigma_{\tau}} \lesssim \\
									 &\frac{(\ln \tau)^2}{\tau^2} \mathcal{F}_{\tau_0,\tau,\leq\abs{\alpha}+1}\big[ \mathcal{E}_{\tau_0,\tau,\leq\abs{\gamma}+2} + \mathcal{F}_{\tau_0,\tau,\leq\abs{\gamma}+2} \big] 
		\mathcal{F}_{\tau_0, \tau, \abs{\beta}} \mathcal{F}_{\tau_0, \tau, \abs{\delta}}, \\
	\int_{\Sigma_\tau} (L\zeta) \eta(\D*\psi, \D*\varphi) \partial_t \xi & ~ \mathrm{dvol}_{\Sigma_{\tau}} \lesssim \\
									     &	\frac{\ln(\tau)}{\tau^3} \mathcal{F}_{\tau_0,\tau,\leq \abs{\beta}+2}  \big[\mathcal{E}_{\tau_0,\tau,\leq\abs{\gamma}+2} + \mathcal{F}_{\tau_0,\tau,\leq\abs{\gamma}+2}   \big] \mathcal{F}_{\tau_0, \tau, \abs{\alpha}} \mathcal{E}_{\tau_0, \tau, \abs{\delta}},\\
	\int_{\Sigma_\tau}  (L\zeta) \eta(\D*\psi, \D*\varphi) (K\xi + t \xi) & ~ \mathrm{dvol}_{\Sigma_{\tau}} \lesssim \\
									      &\frac{\ln(\tau)}{\tau^2} \mathcal{F}_{\tau_0,\tau,\leq \abs{\beta}+2} \big[\mathcal{E}_{\tau_0,\tau,\leq\abs{\gamma}+2} + \mathcal{F}_{\tau_0,\tau,\leq\abs{\gamma}+2}   \big]     \mathcal{F}_{\tau_0, \tau, \abs{\alpha}} \mathcal{F}_{\tau_0, \tau,\abs{\delta}}.
\end{align*}
Combining with  \eqref{eq:EenergyEst} we arrive at 
\begin{multline}\label{eq:EEE}
	\mathcal{E}_{\tau_0, \tau_1, \leq k} - \mathcal{E}_{\tau_0, \tau_0, \leq k} \lesssim \\
		\int_{\tau_0}^{\tau_1} \frac{(\ln \tau)^2}{\tau^{3}} \big( \mathcal{F}_{\tau_0, \tau, \leq k/2+2} + \mathcal{E}_{\tau_0, \tau, \leq k/2+2}\big) \mathcal{F}_{\tau_0, \tau, \leq k/2+2} \mathcal{F}_{\tau_0, \tau, \leq k} \D{\tau}.
\end{multline}
Similarly we get for the $K$-energy
\begin{multline}\label{eq:EEF}
	\mathcal{F}_{\tau_0, \tau_1, \leq k} - \mathcal{F}_{\tau_0, \tau_0, \leq k} \lesssim \\
		\int_{\tau_0}^{\tau_1} \frac{(\ln \tau)^2}{\tau^{2}} \big( \mathcal{F}_{\tau_0, \tau, \leq k/2+2} + \mathcal{E}_{\tau_0, \tau, \leq k/2+2}\big) \mathcal{F}_{\tau_0, \tau, \leq k/2+2} \mathcal{F}_{\tau_0, \tau, \leq k} \D{\tau}.
\end{multline}

The above estimates imply the following bootstrapping estimate:
\begin{prop}[Bootstrap]
	Fix $k \geq 4$ to be an integer. There exists a universal constant $C$ depending on $k$ such that the following holds:
	Let $\phi$ solve \eqref{eq:WMmain} on the spacetime region sandwiched between $\Sigma_{\tau_0}$ and $\Sigma_{\tau_1}$ with $2 \leq \tau_0 < \tau_1$ be such that $\phi$ vanishes when $e^\rho \geq \tau$. Suppose $\phi$ satisfies the following assumptions:
	we have the initial data bound
	\begin{equation}\label{eq:idbound}
		\mathcal{E}_{\tau_0, \tau_0,\leq k} + \mathcal{F}_{\tau_0,\tau_0,\leq k} \leq \epsilon_0;
	\end{equation}
	and the bootstrap bounds
	\begin{equation}\label{eq:BAbound}
		\mathcal{E}_{\tau_0, \tau_1,\leq k} + \mathcal{F}_{\tau_0, \tau_1, \leq k} \leq \delta_0.
	\end{equation}
	Then we have the improved bounds
	\begin{equation}
		\mathcal{E}_{\tau_0, \tau_1, \leq k} + \mathcal{F}_{\tau_0, \tau_1, \leq k} \leq \epsilon_0 + C \delta_0^3.
	\end{equation}
\end{prop}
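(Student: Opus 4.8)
The plan is to feed the bootstrap assumption \eqref{eq:BAbound} into the already-established integrated energy inequalities \eqref{eq:EEE} and \eqref{eq:EEF}; no Gronwall argument is needed, because the time kernels appearing there, $(\ln\tau)^2\tau^{-3}$ for the $\partial_t$-energy and $(\ln\tau)^2\tau^{-2}$ for the $K$-energy, are \emph{integrable} on $[2,\infty)$. The structural observation that makes this work is that, since $k \geq 4$, we have $k/2 + 2 \leq k$, so every ``low-order'' factor $\mathcal{F}_{\tau_0,\tau,\leq k/2+2}$ or $\mathcal{E}_{\tau_0,\tau,\leq k/2+2}$ on the right-hand sides of \eqref{eq:EEE}--\eqref{eq:EEF} is controlled by the bootstrap quantity $\mathcal{E}_{\tau_0,\tau_1,\leq k} + \mathcal{F}_{\tau_0,\tau_1,\leq k} \leq \delta_0$. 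I would also use that each $\mathcal{E}_{\tau_0,\tau,\leq m}$ and $\mathcal{F}_{\tau_0,\tau,\leq m}$ is non-decreasing in $\tau$ (being a supremum over $[\tau_0,\tau]$), so all of them are $\leq \delta_0$ on $[\tau_0,\tau_1]$ whenever $m \leq k$.

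Substituting these bounds into \eqref{eq:EEE}, the integrand is dominated by $C(\ln\tau)^2\tau^{-3}\delta_0^3$; since $(\ln\tau)^2\tau^{-3} \lesssim \tau^{-5/2}$ for $\tau \geq 2$, the integral $\int_{\tau_0}^{\tau_1}(\ln\tau)^2\tau^{-3}\,\D{\tau}$ is at most $\int_2^\infty (\ln\tau)^2\tau^{-3}\,\D{\tau} < \infty$, so $\mathcal{E}_{\tau_0,\tau_1,\leq k} - \mathcal{E}_{\tau_0,\tau_0,\leq k} \lesssim \delta_0^3$. The identical manipulation applied to \eqref{eq:EEF}, using that $(\ln\tau)^2\tau^{-2} \lesssim \tau^{-3/2}$ is likewise integrable on $[2,\infty)$, gives $\mathcal{F}_{\tau_0,\tau_1,\leq k} - \mathcal{F}_{\tau_0,\tau_0,\leq k} \lesssim \delta_0^3$. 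Adding the two estimates and invoking the initial-data bound \eqref{eq:idbound} yields $\mathcal{E}_{\tau_0,\tau_1,\leq k} + \mathcal{F}_{\tau_0,\tau_1,\leq k} \leq \epsilon_0 + C\delta_0^3$, with $C$ depending only on $k$ (through the combinatorial constants in \eqref{eq:EEE}--\eqref{eq:EEF} and the two convergent $\tau$-integrals) --- exactly the claimed improvement.

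I do not expect a genuine obstacle in this last step: the real work has already been done in deriving \eqref{eq:EEE}--\eqref{eq:EEF}, and in particular in the bilinear decomposition \eqref{eq:bilinearest}, whose exploitation of the pointwise decay of the twisted derivative $K\psi + t\psi$ is precisely what supplies the extra power of $\tau$ that turns the $K$-energy time-kernel into the integrable $\tau^{-2}$ rather than the borderline $\tau^{-1}$. The only point that requires care is the regularity bookkeeping $k/2 + 2 \leq k$, i.e.\ that $k \geq 4$; were one to attempt the argument at lower regularity the low/high splitting of the nonlinearity would collapse and the right-hand sides of \eqref{eq:EEE}--\eqref{eq:EEF} would cease to be cubic in $\delta_0$, so the bootstrap would fail to close.
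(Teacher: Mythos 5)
Your proposal is correct and follows essentially the same route as the paper: the paper's proof consists of the single observation that the claim ``follows immediately from the integrability of $(\ln\tau)^2/\tau^2$ on $[\tau_0,\infty)$,'' which is precisely your step of inserting the bootstrap bound \eqref{eq:BAbound} into \eqref{eq:EEE}--\eqref{eq:EEF} and using the convergent $\tau$-integrals together with \eqref{eq:idbound}. Your extra bookkeeping (monotonicity of the sup-based energies and $k/2+2\leq k$ for $k\geq 4$) just makes explicit what the paper leaves implicit.
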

\begin{proof}
	The proposition follows immediately from the integrability of the function $\frac{(\ln \tau)^2}{\tau^2}$ on $[\tau_0, \infty)$. 	
\end{proof}

In particular, if $\delta_0$ is such that $C \delta_0^2 \leq \frac13$, and $\epsilon_0 \leq \frac1{3} (\ln2) \delta_0$, then the conclusion of the above proposition implies
	\begin{equation}
		\mathcal{E}_{\tau_0, \tau_1, \leq k} + \mathcal{F}_{\tau_0, \tau_1, \leq k} \leq \frac23 \delta_0.
	\end{equation}
In this case, the (semi-)global existence part of our main \thref{thm:mainWM} follows by a continuity argument, and the decay estimates follow from an application of \thref{thm:GlobSob} to the energy bounds derived above. 

\begin{rmk}\label{rmk:otherder}
	In the arguments above we were able to close the argument by commuting the equation with \emph{only} Lorentz boost vector fields $L^i$. 
	If we were to wish to also control the derivatives relative to the full algebra of the symmetries of Minkowski space, generated by the vector fields 
	\begin{equation}\label{eq:fullcommutators}
		\mathfrak{Z} \eqdef \{ \partial_t, \partial_{x^i}, \Omega_{ij}, L^i\},
	\end{equation}
	we see that the argument goes through largely unchanged, with the only significant modification coming in \thref{lem:borderline}. As in this case we can no longer guarantee that the derivative on $\zeta$ is an $L$ derivative, we need to use the following observations:
	\begin{enumerate}
		\item In the case of $\partial_t \zeta$, whereas in the proof of \thref{lem:borderline} we used that the square integral of $(L\zeta) / \sqrt{\cosh\rho}$ is controlled by $\mathcal{F}_\tau[\zeta]^2$, we have that the square integral of $\partial_t \zeta / \sqrt{\cosh\rho}$ is controlled by $\mathcal{E}_\tau[\zeta]^2$, and hence we replace the $\mathcal{F}[\zeta]$ on the right of the inequality by $\mathcal{E}[\zeta]$. 
		\item In the case of $\Omega_{ij} \zeta$, the decomposition \eqref{eq:omegadecomp} and the boundedness of $\frac{x}{t}$ allows us to bound the square integral of $(\Omega_{ij}\zeta) / \sqrt{\cosh\rho}$ also by $\mathcal{F}[\zeta]^2$. 
		\item Finally, in the case of the $\partial_{x^i}$, we decompose
			\[ \partial_{x^i} = \frac{1}{t} L^i - \frac{x^i}{t} \partial_t \]
			and therefore the square integral of $\partial_{x^i} \zeta / \sqrt{\cosh\rho}$ is also bounded by $\mathcal{E}[\zeta]^2$. 
	\end{enumerate}
	These changes can be accommodated in \eqref{eq:EEE} and \eqref{eq:EEF} by changing the factor $\mathcal{F}_{\tau_0,\tau, \leq k}$ to $(\mathcal{F}_{\tau_0,\tau, \leq k} + \mathcal{E}_{\tau_0, \tau, \leq k})$, following which the bootstrap argument proceeds unchanged. 
\end{rmk}

\section{A perturbation result}\label{sect:largeDpert}

The argument in the previous section immediately implies a ``stability of large-data dispersive solutions'' result, in the spirit of and generalizing the results of \cite{Sideri1989}.
For convenience, fix throughout the section a positive integer $k \geq 4$. Implicit constants are understood to depend on $k$. 

First, let us make precise what we mean by ``dispersive solutions''. 
Observe that the global solutions of \thref{thm:mainWM} are automatically ``dispersive in the forward light-cone'' per the following definition. 

\begin{defn}\label{defn:dispsoln}
	Let $\Phi$ be a \emph{global} solution of \eqref{eq:WMmain}. We say that it is \emph{dispersive in the forward light-cone} if there exists a constant $M > 0$ such that the following bounds hold on the set $\{ t > \max(1 + \abs{x},2)\}$:
	\begin{itemize}
		\item Uniform-in-$\tau$  $L^\infty$ bounds for all $\alpha \in \mathfrak{I}^{\leq k-2}$. 
			\[
				\begin{cases}
					\abs{\partial_t L^\alpha \Phi} \leq M \tau^{-1} \\
					\abs{L^i L^\alpha \Phi} \leq M \tau^{-1}  \\
					\abs{(K + t) L^\alpha \phi} \leq M  \\
					\abs{\Phi} \leq M \ln(\tau) \tau^{-1} 
				\end{cases}
			\]
		\item Space-time $L^2$ bounds. For every $\alpha \in \mathfrak{I}^{\leq k}$,
			\[
				\int_{\{t > \max(1+ \abs{x},2)\}} \frac{ \ln(\tau)^2}{\tau^2\cosh(\rho)} \left[ (\partial_t L^\alpha \Phi)^2 + (L L^\alpha\Phi)^2 + \frac{1}{\tau^2} ([K + t] L^\alpha \Phi)^2 \right] ~\mathrm{dvol} \leq M^2
			\]
	\end{itemize}
\end{defn}

\begin{rmk}
	The background solutions considered in \cite{Sideri1989}, which are formed from looking at solutions to the wave-maps equation whose range restrict to a geodesic on the target manifold, are automatically conjugate to a solution of the linear wave equation and therefore also satisfy the above bounds making them dispersive in the forward light-cone. 
\end{rmk}

We will study here perturbations of a given solution $\Phi$. That is, we look for $\phi$ such that $\Phi + \phi$ is a solution to \eqref{eq:WMmain} and study the corresponding initial problem. 
The equation satisfied by $\phi$ is 
\begin{equation}\label{eq:WMpert}
	\Box \phi = (\Phi + \phi) \cdot \eta(\D*(\Phi + \phi), \D*(\Phi + \phi)) - \Phi \cdot \eta(\D\Phi,\D\Phi). 
\end{equation}

\begin{thm}\label{thm:pert}
	Let $\Phi$ be a \emph{global} solution of \eqref{eq:WMmain} that is dispersive in the forward light-cone, with the associated constant $M$. 
	Consider the future-evolution governed by \eqref{eq:WMpert} on $\Real^{1,d}$, with initial data prescribed at time $t = 2$ such that $\phi(2,x)$ and $\partial_t\phi(2,x)$ are supported in the ball of radius $1$. Then there exists some constant $\lambda > 0$ \emph{independent of $M$} such that if 
	\[ \norm[H^{k+1}]{\phi(2,\text{---})} + \norm[H^{k}]{\partial_t\phi(2,\text{---})} \leq \frac{1}{\exp(\lambda (1+M)^2)} \]
	then there exists a future-global solution $\phi:[2,\infty)\times\Real^d \to \Real$. 
\end{thm}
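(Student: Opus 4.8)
The plan is to run the hyperboloidal energy method of Section \ref{sect:2dWM} for the perturbation equation \eqref{eq:WMpert}, treating every factor of $\Phi$ by means of the bounds supplied in \thref{defn:dispsoln}. Expanding \eqref{eq:WMpert}, the right-hand side is a sum of a \emph{cubic} term $\phi\,\eta(\D*\phi,\D*\phi)$; \emph{quadratic} terms of the schematic shapes $\Phi\,\eta(\D*\phi,\D*\phi)$ and $\phi\,\eta(\D*\Phi,\D*\phi)$; and \emph{linear} terms $\Phi\,\eta(\D*\Phi,\D*\phi)$ and $\phi\,\eta(\D*\Phi,\D*\Phi)$. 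Crucially the whole right-hand side is homogeneous linear in $(\phi,\D*\phi)$ with $\Box$ as principal part, so $\phi$ satisfies a linear wave equation and by finite speed of propagation remains supported in $\{t \geq 1+\abs{x}\}$; hence all the finite-speed simplifications of Section \ref{sect:2dWM} — in particular $\cosh(\rho)\leq\tau$ and the logarithmic Hardy inequality \thref{lem:hardy:2} via \eqref{eq:logloss} — are available, and the entire argument takes place in the region where \thref{defn:dispsoln} applies.

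I would work with the \emph{squared} total energy $Y(\tau) \eqdef \sum_{\abs{\alpha}\leq k}\big(\mathcal{E}_\tau[L^\alpha\phi]^2 + \mathcal{F}_\tau[L^\alpha\phi]^2\big)$, since it is the squared formulation that makes the energy inequality Gronwall-ready. Commuting \eqref{eq:WMpert} with $L^\alpha$ and feeding the resulting bulk integrals into \eqref{eq:EenergyEst}--\eqref{eq:FenergyEst}, I estimate the three classes of terms as follows. The cubic term is handled exactly as in \eqref{eq:EEE}--\eqref{eq:EEF} and contributes $\int b(\tau)\,Y(\tau)^{2}\,\D{\tau}$ with $b(\tau) = C(\ln\tau)^2/\tau^2$. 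For the quadratic terms I place the single factor of $\Phi$ (or its derivative) in $L^\infty$ using the pointwise bounds of \thref{defn:dispsoln} — which costs one power of $M$ — and treat the remaining $\phi$-factors via \eqref{eq:bilinearest} and the bilinear estimates \thref{lem:nonborder}--\thref{lem:borderline} exactly as in Section \ref{sect:2dWM}; these contribute at most $\int b(\tau)\,(1+M)\,Y(\tau)^{3/2}\,\D{\tau}$. The decisive terms are the linear ones: here \emph{exactly one} of the three differentiated/coefficient factors is a $\phi$, the other two are $\Phi$'s, and the energy multiplier supplies a second $\phi$-factor; placing both $\phi$-factors in the appropriate weighted $L^2$ (using \thref{lem:hardy:2} for an undifferentiated one) and both $\Phi$-factors in $L^\infty$ via \thref{defn:dispsoln}, a Cauchy--Schwarz on the two $\phi$-factors produces a bound of the form $\int a(\tau)\,Y(\tau)\,\D{\tau}$ with $a(\tau) = C M^2 (\ln\tau)^2/\tau^2$. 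The point is that this term is \emph{linear} in $Y$ and that $\int_{\tau_0}^\infty a(\tau)\,\D{\tau} \leq C M^2 < \infty$; this is precisely the origin of the $\exp(\lambda(1+M)^2)$ threshold.

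Collecting the three contributions, the energy inequality takes the form
\[
	Y(\tau) \leq C\epsilon_0^2 + \int_{\tau_0}^{\tau}\Big[ a(s) + (1+M)\,b(s)\,Y(s)^{1/2} + b(s)\,Y(s)\Big]\, Y(s)\,\D{s},
\]
where $\epsilon_0^2 \lesssim \big(\norm[H^{k+1}]{\phi(2,\text{---})} + \norm[H^k]{\partial_t\phi(2,\text{---})}\big)^2$ and $\tau_0$ is the ($O(1)$) hyperboloidal time at which the evolution starts (cf.\ \thref{thm:mainWM}). I would close this by the continuity/bootstrap scheme of Section \ref{sect:2dWM}: assume $Y(\tau)\leq\Delta$ on $[\tau_0,\tau_1]$ with $\Delta \leq (1+M)^{-2}$, so that the bracket is $\leq a(s) + 2b(s)$; Gronwall then gives $Y(\tau)\leq C\epsilon_0^2\exp\!\big(\int_{\tau_0}^\infty (a+2b)\big) \leq C\epsilon_0^2\, e^{C'(1+M^2)}$. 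Taking $\Delta = (1+M)^{-2}$ and then $\lambda$ large enough \emph{independently of $M$} — absorbing the universal constants and using $(1+M)^{-2}\geq e^{-(1+M^2)}\geq e^{-(1+M)^2}$ — so that the hypothesis $\norm[H^{k+1}]{\phi(2,\text{---})} + \norm[H^k]{\partial_t\phi(2,\text{---})} \leq e^{-\lambda(1+M)^2}$ forces $C\epsilon_0^2 e^{C'(1+M^2)} \leq \tfrac12\Delta$, the bootstrap improves, the a priori bound $Y\leq(1+M)^{-2}$ propagates, and the solution extends to $\tau=\infty$, i.e.\ is future-global.

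The main obstacle, as anticipated in the remarks following \eqref{eq:Kptwbd}, is keeping the linear-in-$\phi$ terms \emph{exactly linear} in $Y$ and \emph{integrable} in $\tau$. Two points require care. First, the weighted $L^\infty$ bound $\norm[L^\infty(\Sigma_\tau)]{\cosh(\rho)\,\ln(\tau)\,\eta(\D*\Phi,\D*\Phi)} \lesssim M^2(\ln\tau)^2/\tau^2$ — together with its mixed-$t$/$\tau$ variants coming from \eqref{eq:bilinearest} — must genuinely hold; this is exactly where the bound $\abs{(K+t)\Phi}\leq M$ on the twisted quantity, rather than a naive pointwise bound on $\partial\Phi$, is used, and it is the reason \eqref{eq:bilinearest} was designed to expose $(K+t)\psi$ and \thref{thm:mainWM} was arranged to avoid top-order logarithmic growth. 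Second, in the borderline sub-case where a top-order commutator derivative falls on a \emph{factor of} $\Phi$ rather than of $\phi$, the remaining $\Phi$-factor is of order $\leq 1$ and the two $\phi$-factors are of order $\leq 1$ and $k$; one then pairs the highest $\Phi$-derivative against the order-$k$ $\phi$-factor by a Cauchy--Schwarz in \emph{space-time} against the weighted $L^2$ bound of \thref{defn:dispsoln}, whose weight $\ln(\tau)^2/(\tau^2\cosh\rho)$ is tuned precisely to compensate the decay lost by not having a pointwise bound at that order — this bookkeeping is the most delicate part of the argument, but it still yields a contribution of the same integrable, at most quadratic-in-$Y$ type, so the scheme above goes through.
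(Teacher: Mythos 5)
Your overall strategy is the same as the paper's: commute with the boosts, run \eqref{eq:EenergyEst}--\eqref{eq:FenergyEst} together with the bilinear estimates of \thref{lem:nonborder}--\thref{lem:borderline} (with $\Phi$ occupying some of the slots), use the pointwise bounds of \thref{defn:dispsoln} on low-order $\Phi$-factors and the space-time $L^2$ bound on the top-order ones, and close by a bootstrap plus Gronwall, with the exponential threshold coming from the terms linear in $\phi$. (One wording slip: the right-hand side of \eqref{eq:WMpert} is not ``homogeneous linear'' in $(\phi,\partial\phi)$ --- it contains quadratic and cubic terms --- but the conclusion you want, finite speed of propagation and hence the support property $\cosh\rho\leq\tau$ and the applicability of \thref{lem:hardy:2}, does hold because every term carries at least one factor of $\phi$ or its first derivatives.)

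The gap is in exactly the step you flag as most delicate: the linear-in-$\phi$ terms in which the commutator derivatives of order $k-1$ or $k$ fall on $\Phi$, where no pointwise bound is available. Your prescription --- a Cauchy--Schwarz \emph{in space-time} pairing the top-order $\Phi$-derivative against the order-$k$ multiplier $(K+t)L^\alpha\phi$ or $\partial_t L^\alpha\phi$ --- destroys the Gronwall structure: the $\Phi$-side is bounded by $M$, but the $\phi$-side is then a weighted space-time $L^2$ norm of the unknown, which you can only control by $\sup_\tau Y$ times an $O(1)$ constant (the total mass of $\ln(\tau)^2\tau^{-2}\,\mathrm{d}\tau$ is finite but not small). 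The outcome is a term of size $C M^2 \sup_\tau Y$ sitting \emph{outside} any $\tau$-integral; since $CM^2$ is not small it can neither be absorbed into the left-hand side nor fed into Gronwall, so your bracket inequality does not close as written (a Young-type splitting can rescue existence, but only with a threshold like $\exp(-\lambda(1+M)^4)$ rather than the stated $\exp(-\lambda(1+M)^2)$). The repair --- and this is how the paper argues --- is to apply Cauchy--Schwarz only on each slice $\Sigma_\tau$, putting the top-order $\Phi$-derivatives into the fixed-$\tau$ weighted $L^2$ norm $G_\alpha(\tau)$ and the multiplier into $\mathcal{E}_\tau$ or $\mathcal{F}_\tau$, so that the dangerous contribution stays in the form $\int \tfrac{\ln(\tau)^2}{\tau^2}(1+M)\,G_\alpha(\tau)\,\big(\mathcal{E}_{\leq k/2+2}+\mathcal{F}_{\leq k/2+2}\big)\,\big(\mathcal{E}_{\leq k}+\mathcal{F}_{\leq k}\big)\,\mathrm{d}\tau$, i.e.\ linear in the squared running energy with a $\tau$-dependent coefficient; only then does one estimate the coefficient alone by H\"older in $\tau$, $\int \tfrac{\ln(\tau)^2}{\tau^2}G_\alpha\,\mathrm{d}\tau \leq \big(\int \tfrac{\ln(\tau)^2}{\tau^2}\,\mathrm{d}\tau\big)^{1/2}\big(\int \tfrac{\ln(\tau)^2}{\tau^2}G_\alpha^2\,\mathrm{d}\tau\big)^{1/2}\lesssim M$, the last factor being exactly the space-time bound in \thref{defn:dispsoln}. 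With the quadratic and cubic terms handled as you propose (coefficient $(M+\delta_0)^2$ under the bootstrap), Gronwall then yields an amplification factor $\exp(\lambda_0(1+M)^2)$ and hence the stated smallness threshold.
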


\begin{rmk}
	We keep track of the $M$ dependence for applications in the next section; for stability of large-data dispersive solutions the precise dependence is not needed. 
	The large exponential loss in $M$ is due entirely to the \emph{linear}-in-$\phi$ terms that appear on the right of \eqref{eq:WMpert}, and seems hard to avoid. 
	In the next section this exponential loss implies that our method only addresses very strongly localized non-compact initial data, with tails that decay super-exponentially. 
\end{rmk}

\begin{proof}[Sketch]
	We proceed largely as in the previous section, and apply the estimates from \thref{lem:nonborder} and \thref{lem:borderline}, with the slots $(\zeta, \psi, \phi)$ in the Lemmas replaced by $L^\alpha \phi$ and $L^\beta \Phi$. 
	If we make the bootstrap assumptions \eqref{eq:idbound} and \eqref{eq:BAbound} as before, then in the place of \eqref{eq:EEE} and \eqref{eq:EEF} we obtain the energy estimate
	\begin{multline}
		\mathcal{E}_{\tau_0,\tau_1,\leq k} + \mathcal{F}_{\tau_0,\tau_1,\leq k} - \epsilon_0 \lesssim \int_{\tau_0}^{\tau_1} \frac{(\ln \tau)^2}{\tau^2} (M + \delta_0)^2 (\mathcal{E}_{\tau_0,\tau,\leq k} + \mathcal{F}_{\tau_0,\tau, \leq k}) \D\tau \\
		+ \sum_{\alpha \in \mathfrak{I}^{\leq k}}\int_{\tau_0}^{\tau_1} \frac{(\ln\tau)^2}{\tau^2} (M + \delta_0) (\mathcal{E}_{\tau_0, \tau, \leq k/2 + 2} + \mathcal{F}_{\tau_0, \tau, \leq k/2 + 2}) \cdot \\
		\left(\int_{\Sigma_\tau} \abs{\partial_tL^\alpha \Phi}^2 + \abs{L L^\alpha\Phi}^2 + \frac{1}{\tau^2} \abs{[K + t]L^\alpha \Phi})^2 \frac{1}{\cosh\rho} ~\mathrm{dvol}_{\Sigma_\tau}\right)^{1/2} \D\tau.
	\end{multline}
	Applying Gr\"onwall's inequality we get (where $\lambda_0$ is the implicit structural constant that appears in the previous inequality):
	\begin{multline}
		\mathcal{E}_{\tau_0,\tau_1,\leq k} + \mathcal{F}_{\tau_0,\tau_1,\leq k} \leq \epsilon_0 \exp\Bigg( \lambda_0 (M + \delta_0)^2 \int_{\tau_0}^{\tau_1} \frac{(\ln \tau)^2}{\tau^2} \D\tau + \lambda_0 (M + \delta_0) \int_{\tau_0}^{\tau_1}   \\
		\sum_{\alpha\in \mathfrak{I}^{\leq k}} \Big( \int_{\Sigma_\tau} \abs{\partial_tL^\alpha \Phi}^2 + \abs{L L^\alpha \Phi}^2 + \frac{1}{\tau^2} \abs{[K + t]L^\alpha \Phi})^2 \frac{1}{\cosh\rho} ~\mathrm{dvol}_{\Sigma_\tau} \Big)^{1/2} \D\tau \Bigg).
	\end{multline}
	The first integral inside the exponential is obviously bounded. The second integral is also bounded after noting that $\tau^{-2} \ln(\tau)^2 \D\tau$ is a finite measure and so we can apply H\"older's inequality to control $L^1$ by $L^2$, whereupon our space-time $L^2$ bound comes into play. If $\delta_0$ is initially chosen to be $1$, then we can find $\lambda > 0$ such that the above inequality simplifies to 
	\begin{equation}
		\mathcal{E}_{\tau_0, \tau_1, \leq k} + \mathcal{F}_{\tau_0, \tau_1, \leq k} \leq \epsilon_0 \exp(\frac12 \lambda(1 + M)^2).
	\end{equation}
	So provided we choose $\epsilon_0 < \exp(-\lambda(1 + M)^2)$ we obtain the \emph{improved} bootstrap bound
	\[ \mathcal{E}_{\tau_0, \tau_1, \leq k} + \mathcal{F}_{\tau_0, \tau_1, \leq k} \leq  \exp(-\frac12 \lambda) < 1 = \delta_0 \]
	from which global existence follows from a standard continuity argument. 
\end{proof}

\begin{rmk}
	The phenomenon discussed here holds rather generally for large-classes of semilinear equations. In particular, one can regard the above as related to the construction described in \cite{WanYu2016} for semilinear wave equations. There a version of Christodoulou's \emph{short pulse} method is used to construct large data solutions to such equations. Their method can be interpreted as constructing first a future-global, dispersive approximate solution via the almost-radial initial data (which can, heuristically speaking, by studying an approximate evolution for the radial part), and perturbing it using that both the approximate solution and residual error terms have good dispersive decay.  Our approach provides in a general setting, the argument for the perturbative step.  
\end{rmk}

\begin{rmk}\label{rmk:otherderag}
	The perturbed solution $\Phi + \phi$ also satisfies uniform-in-$\tau$ $L^\infty$ bounds and space-time $L^2$ bounds by triangle inequality. Thus for $\lambda > 0$ sufficiently large the same bounds as $\Phi$ hold, but with $M$ replaced by $(M+1)$. 
\end{rmk}

\begin{rmk}
	The \thref{defn:dispsoln} and \thref{thm:pert} are stated using only Lorentz boost commutators. However, in view of \thref{rmk:otherder}, the same results hold if we use also the full set $\mathfrak{Z}$ of \eqref{eq:fullcommutators}. 
	For the result in the next section, having available the full set of commutators is convenient (but not necessary). 
\end{rmk}

\section{Extension to non-compact data} \label{sect:noncompact}

The discussion in the previous section only deals with compactly-supported initial data, which is a common problem with studying nonlinear hyperbolic equations using the hyperboloidal-based energy methods. 
In \cite{LeFMa2017}, LeFloch and Ma introduced the so-called Euclidean-Hyperboloidal method to overcome this difficulty in the context of the stability problem of Minkowski space. 
Their method requires gluing asymptotically Euclidean slicings to the hyperboloids and is not obviously compatible with the weight loss of Lemma \ref{lem:hardy:2}. 
For the wave-map problem we circumvent this difficulty by taking advantage of the \emph{scaling invariance} of the wave-map equation in $d = 2$, this allows us to show that in the case of \emph{strongly localized} (but not necessarily of compact support) initial data, a small-data global existence theorem persists. 

We begin by observing that if $\phi$ is a solution to \eqref{eq:WMmain}, then so is 
\begin{equation}
	S_m \phi(t,x) \eqdef \phi(2^m (t-2) + 2, 2^m x), 
\end{equation}
the extra time-translation is to keep the constant time slice $\{t = 2\}$ invariant under the transformation. 

Unfortunately, as the argument requires studying the initial data in $H^{k}$ norm (and not a scale invariant norm), the scaling argument does not automatically allow us to approximate non-compact initial data by compact ones. Specifically, observe that for any multi-index $\alpha$
\[ \int_{\{2\} \times \Real^d} |\partial^\alpha S_m\phi(t,x)|^2 \D x = 2^{2m (|\alpha| - 1)} \int_{\{2\}\times \Real^d} |\partial^\alpha \phi(t,x)|^2 \D x. \]
This scaling property rules out na\"\i{}vely approaching non-compact initial data via a cut-off/rescaling procedure, even if one were to measure the initial data in a weighted Sobolev space (since to control the higher-order initial energy, we must measure the initial data against non-singular weights). 

Our approach is to perform a spatial dyadic decomposition of the initial data and apply the Perturbation Theorem \ref{thm:pert} to iteratively construct the solution. 
More precisely, let $\chi_0$ be a smooth, positive function of compact support on $\Real^2$ such that it vanishes outside the ball of radius 1 and is $\equiv 1$ on the ball of radius $1/2$. 
For $m \geq 1$ we can set
\begin{equation}
	\chi_m(x) = \chi_0( 2^{-m} x) - \chi_0( 2^{1-m} x)
\end{equation}
so that $\chi_m$ is supported in the annulus $\abs{x} \in [2^{m-2}, 2^m]$. 

Consider now the initial value problem for \eqref{eq:WMmain}, with initial data prescribed at $t = 2$
\begin{equation}
	\begin{cases}
		\phi(2,x) = \mathring{\phi}(x) \\
		\partial_t\phi(2,x) = \mathring{\varphi}(x) 
	\end{cases}.
\end{equation}
We decompose the initial data spatially. For $m \geq 0$, 
\begin{gather}
	\mathring{\phi}_m \eqdef \chi_m \cdot \mathring{\phi},\\
	\mathring{\varphi}_m \eqdef \chi_m \cdot \mathring{\varphi}.
\end{gather}
Let $\phi_0$ solve \eqref{eq:WMmain} with initial data $(\mathring{\phi}_0, \mathring{\varphi}_0)$.
For $m \geq 1$ define iteratively first $\Phi_m = \sum_{n = 0}^{m-1} \phi_n$, and $\phi_m$ the solution to
\begin{equation}
	\Box \phi_m = (\Phi_m + \phi_m) \eta(\D*(\Phi_m + \phi_{m}) , \D*(\Phi_m + \phi_m)) -\Phi_{m} \eta(\D*\Phi_{m}, \D* \Phi_m)
\end{equation}
with initial data $(\mathring{\phi}_m, \mathring{\psi}_m)$. 
Then provided we can show that each $\phi_m$ exists globally, the infinite sum $\phi = \sum_{m = 0}^\infty \phi_m$ will be a solution to the original problem. The convergence of this sum is automatically \emph{uniform} on any compact space-time subset: in fact, due to finite-speed-of-propagation, $\phi_m$ vanishes within the set $\{ \abs{t} + \abs{x} \leq 2^{m-2} \}$, and hence for every $(t,x)$ the sum $\phi$ converges after finitely many terms. 

In order to apply the uniform estimates of \thref{thm:pert}, we will rescale the problem solved by $\phi_m$. Define the scaling operator
\begin{equation}
	\mathring{S}_m f(x) \eqdef f(2^m x)
\end{equation}
Then we have that $S_m \phi_m$ solves the equation
\begin{multline}
	\Box S_m\phi_m = (S_m\Phi_m + S_m\phi_m) \eta(\D*(S_m\Phi_m + S_m\phi_{m}) , \D*(S_m\Phi_m + S_m\phi_m))\\ -(S_m\Phi_{m}) \eta(\D*S_m\Phi_{m}, \D* S_m\Phi_m)
\end{multline}
with initial data $(\mathring{S}_m \mathring{\phi}_m, 2^{m} \mathring{S}_m \mathring{\varphi}_m)$.

(Notice that our scaling operators form a semi-group under composition: $S_m = (S_1)^{\circ m}$ and $\mathring{S}_m = (\mathring{S}_1)^{\circ m}$.)

Therefore to arrive at a sufficient condition for global existence, it suffices to study how the uniform-in-$\tau$ $L^\infty$ bounds and space-time $L^2$ bounds behave under rescaling. Observe first that
\begin{subequations}
\begin{gather}
	\label{eq:SmOcomm} \Omega_{ij} S_m \phi = S_m \Omega_{ij} \phi \\
	 \label{eq:SmDcomm} \partial S_m \phi = 2^{m} S_m \partial \phi \\
	 \label{eq:SmLcomm} L^i S_m \phi = S_m(L^i \phi) + (2^{m+1} - 2) S_m \partial_t\phi
\end{gather}
and
\begin{multline}\label{eq:SmKcomm}
(K + t) S_m \phi = 2^{-m} S_m (K+t) \phi + 4(1 - 2^{-m}) S_m( \sum \frac{x^i}{t} L^i \phi + \frac{\tau^2}{t} \partial_t\phi) \\
+ 2^{-m}(2^{m+1} - 2)^2 S_m \partial_t \phi + 2(1 - 2^{-m}) S_m\phi.\end{multline}
\end{subequations}

\begin{rmk}
	Notice that operators $\partial_t$, $L^i$, and $(K+t)$ are respectively homogeneous of degrees $-1$, $0$, and $1$ under space-time scaling of $\Real^{1,2}$. 
	So such estimates as above are expected. However, since our transformation $S_m$ is the conjugation of the scaling operation with a time-translation, and as $L^i$ and $(K+t)$ are not invariant under time-translation, we pick up \emph{lower order} correction terms. 
\end{rmk}

Also, on the set $\{t \geq \max(1 + |x|,2)\}$ one sees
\begin{equation}\label{eq:SmTauub}
	S_m \tau = 2^m \sqrt{ (t -2 + 2^{1-m})^2 - \abs{x}^2} < 2^{m} \tau. 
\end{equation}
Restricted to the set $\{ S_m t > \max(1 + |S_m x|, 2)\}$ we have
\[ S_m t > \max(1 + \abs{S_m x},2) \implies t > \max(\abs{x} + 2 - 2^{-m}, 2) \]
which after rewriting as $ t - \sqrt{t^2 - \tau^2} > 2 - 2^{-m}$ implies
\begin{equation}\label{eq:crucialtaubnd}
	\tau^2 + 4(1 - 2^{-m-1})^2 > 4(1 - 2^{-m-1}) t.
\end{equation}
Plugging \eqref{eq:crucialtaubnd} into the expression for $S_m \tau$ we see that, \emph{on the support of} $S_m \Phi_m$, we have 
\begin{align*}
	(S_m \tau)^2 & = 2^{2m} \left( \tau^2 - 4(1-2^{-m}) t + 4(1 - 2^{-m})^2\right) \\
		     & > 2^{m-1} \left( \frac{\tau^2}{1 - 2^{-m-1}} - 1 +  2^{-m}\right) \\
		     & > 2^{m-1} \left( \tau^2 - 1\right).
\end{align*}
Finally, noting that when restricted to the future $\{t \geq 2\}$, the bound \eqref{eq:crucialtaubnd} implies (when $m \geq 0$)
\[ \tau^2 > 4(1 - 2^{-2m - 2}) > 3 \]
we finally get the uniform bound
\begin{equation}\label{eq:SmTaulb}
	S_m \tau \geq \frac{2^{m/2}}{\sqrt{3}} \tau
\end{equation}
establishing that $\tau$ and $S_m\tau$ are comparable up to a factor depending on $m$. 

Now, fixing $k \geq 4$, the above computations imply the following lemma. 
Here let $Z^\ell$ be strings of length $\ell$ drawn from \eqref{eq:fullcommutators}, and implicitly we are only working within the forward light cone $\{ t > \max(1 + |x|,2)\}$. 

\begin{lem} \label{lem:comparisonscale}
	There exists a constant $C$ which depends on $k$ such that if $S_{m-1}\Phi_m$ satisfies the $L^\infty$ bounds for all $0 \leq \ell \leq k-2$
\[
				\begin{cases}
					\abs{\partial_t Z^\ell \Phi} \leq M \tau^{-1} \\
					\abs{L Z^\ell \Phi} \leq M \tau^{-1}  \\
					\abs{(K + t) Z^\ell \phi} \leq M \\
					\abs{\Phi} \leq M \ln(\tau) \tau^{-1} 
				\end{cases}
			\]
			and the $L^2$ bounds for all $0 \leq \ell \leq k$
			\[
				\int_{\{t > \max(1+ \abs{x},2)\}} \frac{ \ln(\tau)^2}{\tau^2\cosh(\rho)} \left[ (\partial_tZ^\ell \Phi)^2 + (L Z^\ell\Phi)^2 + \frac{1}{\tau^2} ([K + t]Z^\ell \Phi)^2 \right] ~\mathrm{dvol} \leq M^2,
			\]
			then $S_m\Phi_m$ satisfies the \emph{same} bounds with $M$ replaced by $CM$. 
\end{lem}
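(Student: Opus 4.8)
The plan is to reduce the statement to a single rescaling step and then feed the explicit commutator and weight-comparison identities \eqref{eq:SmOcomm}--\eqref{eq:crucialtaubnd} into the hypotheses on $\Phi_m$. Since the scaling operators form a semigroup, $S_m = S_1 \circ S_{m-1}$, so $S_m\Phi_m = S_1(S_{m-1}\Phi_m)$. Setting $\Psi \eqdef S_{m-1}\Phi_m$ --- which by assumption obeys the stated $L^\infty$ and $L^2$ bounds with constant $M$ --- it suffices to show that $S_1\Psi$ obeys the same bounds with $M$ replaced by $C(k)M$. The key gain of this reduction is that the correction coefficients in \eqref{eq:SmLcomm}--\eqref{eq:SmKcomm} specialize to small \emph{integers} when $m = 1$, so that every constant produced by one application of $S_1$ is absolute; combined with the combinatorics of strings of length at most $k$, the final constant depends only on $k$.

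First I would set up the commutator bookkeeping. Using \eqref{eq:SmOcomm}, \eqref{eq:SmDcomm}, \eqref{eq:SmLcomm} with $m = 1$ and inducting on $\ell$, one obtains for any string $Z^\ell$ drawn from $\mathfrak{Z}$ an identity $Z^\ell S_1\Psi = \sum_{|Z'| \le \ell} c_{Z'}\, S_1(Z'\Psi)$ with absolute coefficients $c_{Z'}$; the only point to check is that the $\partial_t$-corrections introduced by conjugating $L^i$ past $S_1$ have the same order as $L^i$, so the order never increases. For the quantities involving $K + t$ one additionally invokes \eqref{eq:SmKcomm} with $m = 1$, which expresses $(K+t)S_1 f$ in terms of $S_1[(K+t)f]$, $S_1[\tfrac{x^i}{t}L^i f]$, $S_1[\tfrac{\tau^2}{t}\partial_t f]$, $S_1[\partial_t f]$ and $S_1 f$. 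Taking $f = Z'\Psi$ with $|Z'|$ small enough, each of these is controlled by the hypotheses on $\Psi$ once one uses $|x^i/t| \le 1$ and, on $\{t > \max(1+|x|,2)\}$, the elementary inequality $\tau^2/t \le \tau$, which is exactly what neutralizes the apparently dangerous $\tfrac{\tau^2}{t}\partial_t$ term.

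The pointwise estimates then follow by substituting the commutator expansion into each quantity, applying the hypothesized $L^\infty$ bounds to each $S_1(Z'\Psi)$, and rewriting the resulting bounds --- which are expressed in the variable $\tau$ evaluated at the rescaled point --- in terms of the original $\tau$ via the two-sided comparison \eqref{eq:SmTauub}--\eqref{eq:SmTaulb}, valid on the support of $S_m\Phi_m$ by finite speed of propagation; for the $\ln\tau$ weight appearing in the bound on $|\Phi|$ one also uses $\ln(S_1\tau) \lesssim \ln\tau$, a consequence of the same comparison together with the uniform lower bound $\tau > \sqrt{3}$ recorded in the lemma. For the space-time $L^2$ bound I would instead perform the change of variables induced by $S_1$ --- a spacetime dilation by $2$ precomposed with a time translation, hence of constant Jacobian $2^{d+1}$ --- expand $Z^\ell S_m\Phi_m$ as above, and compare the weight $\ln(\tau)^2/(\tau^2\cosh\rho)$ before and after the change of variables using \eqref{eq:SmTauub}, \eqref{eq:SmTaulb} and crude bounds on $\cosh\rho$; the integral then reduces termwise to the hypothesized $L^2$ bound for $\Psi$.

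The main obstacle is the $K+t$ commutator \eqref{eq:SmKcomm}. It carries the most terms, and the corrections it produces --- arising because $S_1$ is only a time-translate of a genuine dilation, whereas $K+t$ is homogeneous of degree $1$ only with respect to the dilation about the origin --- are precisely the ones for which one must verify, carefully and uniformly on the relevant region, that the available control of $\Psi$ (together with the geometric facts $|x|/t \le 1$, $\tau^2/t \lesssim \tau$, and the $\tau$--$S_1\tau$ comparison, which itself rests on the finite-speed-of-propagation restriction encoded in \eqref{eq:crucialtaubnd}) suffices to absorb them into $C(k)M$. Once that is done, the remaining terms and the $\partial_t$-, $L^i$-, $\Omega_{ij}$-commutators are routine.
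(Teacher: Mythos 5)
Your overall strategy is the same as the paper's: reduce to one application of $S_1$ via the semigroup property, expand commutators through \eqref{eq:SmOcomm}--\eqref{eq:SmKcomm}, and transfer the weights with the two-sided comparison of $\tau$ and $S_1\tau$ coming from \eqref{eq:crucialtaubnd}. The pointwise half of your argument is fine (including the observations $|x^i/t|\le 1$, $\tau^2/t\le\tau$, $\ln(S_1\tau)\lesssim\ln\tau$, and the need to keep $\partial_t$ in the commutator set because of \eqref{eq:SmLcomm}).

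There is, however, a genuine gap in your $L^2$ step. You claim that after the change of variables the integral ``reduces termwise to the hypothesized $L^2$ bound for $\Psi$,'' but the expansion \eqref{eq:SmKcomm} contains the undifferentiated term $2(1-2^{-m})S_m f$ with $f=Z'\Psi$, $|Z'|\le\ell\le k$, and the hypothesized space-time $L^2$ bound controls only $\partial_t Z'\Psi$, $LZ'\Psi$ and $[K+t]Z'\Psi$ --- it contains no weighted $L^2$ norm of $Z'\Psi$ itself. For $|Z'|\le k-2$ (or $k-1$, after rewriting the outermost derivative in terms of $\partial_t$ and $L$) you could fall back on the pointwise bounds and the integrability of the weight, but at top order $|Z'|\in\{k-1,k\}$ no pointwise bound is available either, so the termwise reduction simply fails for this term. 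The paper closes this hole with the logarithmic Hardy inequality, Lemma~\ref{lem:hardy:2}: on each $\Sigma_\tau$ one bounds $\int\frac{1}{\cosh\rho}|Z'\Psi|^2$ by $\int\frac{1+\rho^2}{\cosh\rho}|LZ'\Psi|^2$, then uses the finite-speed-of-propagation support bound $\rho\le\ln\tau$ to convert $1+\rho^2$ into powers of $\ln\tau$, which are absorbed by the spare factor $\tau^{-2}$ carried by this term (its weight is $\ln(\tau)^2/(\tau^4\cosh\rho)$ rather than $\ln(\tau)^2/(\tau^2\cosh\rho)$). Your write-up needs this extra ingredient --- or some substitute for it --- before the $L^2$ estimate is complete; everything else is essentially the paper's argument.
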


\begin{rmk}
	We are forced to deal with an enlarged set of commutators (more than just $L^i$), due to \eqref{eq:SmLcomm}. This means to estimate higher $L$ derivatives of $S_m\Phi_m$ we must also estimate higher $\partial_t$ derivative of $S_{m-1}\Phi_m$. Therefore it is not necessary that we fully embrace the full Poincar\'e algebra $\mathfrak{Z}$; we merely need to add control of the $\partial_t$ commutators into the argument.

	We note that including the whole set of commutators is no problem. As indicated in \thref{rmk:otherderag} using the arguments of \thref{rmk:otherder} the perturbation \thref{thm:pert} also hold using the larger set of commutators if the notion of dispersive solution in \thref{defn:dispsoln} is modified appropriately to include the full set of commutators from $\mathfrak{Z}$. 

	We note that this issue with the $\partial_t$ commutator arising can also be circumvented in a different manner. Instead of commuting with $\partial_t$, we can exploit an ``elliptic'' estimate for $\partial_t$. Noting that $\Phi_m$ solves a wave equation, one can control $\partial^2_{tt} S_{m-1}\Phi_m$ in terms of $\partial_t L S_{m-1}\Phi_m$, $LL S_{m-1}\Phi_m$, and the inhomogeneity/nonlinearity $\Box S_{m-1}\Phi_m$, none of which involves second order $t$ derivatives. In the present situation the first method seems simpler. 
\end{rmk}
\begin{proof}[of \thref{lem:comparisonscale}]
	We check here the terms involving the $(K+t)$ operator, which is the most complicated in view of \eqref{eq:SmKcomm}. The claim follows for the $\partial_t$ and $L$ terms straightforwardly in a similar manner from the computations \eqref{eq:SmDcomm}, \eqref{eq:SmLcomm}, \eqref{eq:SmTauub}, and \eqref{eq:SmTaulb}. 
	Without loss of generality we consider only the case where $\ell = 0$ (no commutators). Each commutation incurs some additional terms for which we have point-wise comparison by virtue of \eqref{eq:SmOcomm}, \eqref{eq:SmDcomm}, and \eqref{eq:SmLcomm}, and which only contribute additional constant factors. 

	Starting from \eqref{eq:SmKcomm} we can write
	\begin{multline*}
		|(K+t) S_{m} \Phi_m| \leq \frac12 \abs{ S_1 (K+t) S_{m-1}\Phi_m} + 2\sum \abs{ S_1  \frac{x^i}{t} L^i S_{m-1} \Phi_m } \\
		+ 2 \abs{ S_1 [(\frac{\tau^2}{t} + 1) \partial_t S_{m-1} \Phi_m]} + \abs{S_1 S_{m-1} \Phi_m}.
	\end{multline*}
	Let us first consider the point-wise estimate for $(K+t) S_{m} \Phi_m = (K+t) S_m S_{m-1} \Phi_m$. Applying the point-wise estimates for $S_{m-1}\Phi_m$ we get
	\[
		|(K+t) S_m \Phi_m | \leq \frac12 M + 4 M S_1 (\tau^{-1}) + 4 M S_1 (1 + \tau^{-1}) + M S_1( \ln(\tau) \tau^{-1}) .
	\]
	On the support of $S_{m-1}\Phi_m$ when $t \geq 2$, we have that $\tau \geq \sqrt{3}$, and $\ln(\tau) / \tau \leq 1 / e$. So we conclude pointwise
	\[ 
		|(K+t) S_m \Phi_m| \leq (\frac{8}{\sqrt{3}} + \frac{1}{e} + 4+ \frac12) M \approx 9.5 M.
	\]

	For the integral estimate, we note that since $S_m\Phi_m$ has, by construction and finite speed of propagation, support within the set $\{ S_1 t > \max(1 + |S_1 x|, 2)\}$, we have that on its support 
	\[ \sqrt{\frac23} \tau \leq S_1 \tau \leq 2 \tau.\]
	Similarly, we also have 
	\[  t \leq S_1 t \leq 2 t.\]
	This implies that 
	\[ \frac{\ln(\tau)^2}{\tau t} \leq \frac{16  S_1(\ln\tau)^2 }{S_1(\tau) S_1(t)}.\]
	So
	\begin{multline*}
		\int \frac{\ln(\tau)^2}{\tau^3 t} \left( [K+t] S_m \Phi_m\right)^2 ~\mathrm{dvol} \leq\\
		256 \int S_1\Bigg[ \frac{\ln(\tau)^2}{\tau^3 t}  \Big( \frac14 |(K+t)S_{m-1}\Phi_m|^2 + 4 |L S_{m-1} \Phi_m|^2 \\
		+ 4 \big(\frac{\tau^2}{t} + 1\big)^2 |\partial_t S_{m-1}\Phi_m|^2 + |S_{m-1}\Phi_m|^2 \Big) \Bigg] ~\mathrm{dvol}
	\end{multline*}
	Undoing the scaling (which we observe to map the set $\{ t \geq \max(1 + |x|, 2)\}$ into itself by construction) we get 
	\begin{multline*}
		\leq 32 \int  \frac{\ln(\tau)^2}{\tau^3 t}  \Big( \frac14 |(K+t)S_{m-1}\Phi_m|^2 + 4 |L S_{m-1} \Phi_m|^2 \\
		+ 4 \big(\frac{\tau^2}{t} + 1\big)^2 |\partial_t S_{m-1}\Phi_m|^2 + |S_{m-1}\Phi_m|^2 \Big) ~\mathrm{dvol}.
	\end{multline*}
	The integral of the first three terms can be bounded by $512 M^2$ using the assumed $L^2$ bound. 
	(Notice that $\frac{1}{\tau^2} (\tau^2 / t + 1)^2 \leq 4$.)
	For the final term in the integrand, we can use our log weighted Hardy inequality \thref{lem:hardy:2} together with the support property of $S_{m-1}\Phi_m$ to get 
\begin{multline*} \int \frac{\ln(\tau)^2}{\tau^4 \cosh\rho} |S_{m-1}\Phi_m|^2 ~\mathrm{dvol} \lesssim \int \frac{\ln(\tau)^3}{\tau^4 \cosh\rho} |L S_{m-1}\Phi_m|^2 ~\mathrm{dvol} \\
\lesssim \int \frac{\ln(\tau)^2}{\tau^2 \cosh\rho} |LS_{m-1}\Phi_m|^2 ~\mathrm{dvol} \lesssim M^2
\end{multline*}
	as needed. (The implicit constant here given by the Hardy inequality.)
\end{proof}

A simple induction argument using \thref{thm:pert} yields then the following sufficient condition for global existence:

\begin{cor}
	Let $C$ be the constant of \thref{lem:comparisonscale} and let $\lambda$ be the constant from \thref{thm:pert}. Then if the initial data $(\mathring{\phi}, \mathring{\varphi})$ is such that 
	\[ \sup_{m \geq 0} \left( \norm[H^{k+1}]{\mathring{S}_m \mathring{\phi}_m} + 2^m \norm[H^k]{\mathring{S}_m \mathring{\varphi}_m} \right) \exp\Bigg( \lambda \Big(1+ \frac{C^m - 1}{C - 1} \Big)^2 \Bigg) \leq 1\]
	there exists a global solution. 
\end{cor}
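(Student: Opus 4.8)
The plan is to promote the iterative scheme already set up to an induction on $m$: at stage $m$ one solves for $\phi_m$ via the Perturbation \thref{thm:pert} (used, as the remark following \thref{lem:comparisonscale} permits, with the full commutator set $\mathfrak{Z}$) while carrying along a dispersive bound on the rescaled backgrounds $S_m\Phi_m$ whose size degrades only geometrically in $m$. Write $a_m \eqdef \frac{C^m-1}{C-1}$, so that $a_0 = 0$ and $a_{m+1} = Ca_m+1$; the hypothesis of the Corollary then says exactly that
\[ \norm[H^{k+1}]{\mathring{S}_m\mathring{\phi}_m} + 2^m \norm[H^k]{\mathring{S}_m\mathring{\varphi}_m} \le \exp\!\big(-\lambda(1+a_m)^2\big) \qquad (m\ge 0). \]
Two structural remarks: $S_{m+1} = S_1\circ S_m$ and $\mathring{S}_{m+1} = \mathring{S}_1\circ\mathring{S}_m$; and, because \eqref{eq:WMpert} is arranged so that $\Phi+\phi$ solves \eqref{eq:WMmain}, a telescoping argument shows every $\Phi_m$ --- hence every $S_m\Phi_m$ by scaling invariance --- solves \eqref{eq:WMmain} as soon as the $\phi_n$ with $n<m$ are known, and $S_m\Phi_{m+1}$ is supported in $\{t\ge 1+\abs{x}\}\cap\{t\ge 2\}$ by finite speed of propagation.

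The inductive hypothesis $H(m)$ is: $\phi_0,\dots,\phi_{m-1}$ all exist globally, and $S_m\Phi_m$ is dispersive in the forward light cone (\thref{defn:dispsoln}, full-commutator version) with associated constant $\le a_m$. For $H(0)$, take $\Phi_0\equiv 0$, a global solution that is trivially dispersive with constant $0 = a_0$; since $S_0$ and $\mathring{S}_0$ are the identity, the hypothesis reads $\norm[H^{k+1}]{\mathring{\phi}_0}+\norm[H^k]{\mathring{\varphi}_0}\le e^{-\lambda}$, and \thref{thm:pert} with $M=0$ produces the global $\phi_0$. For the step $H(m)\Rightarrow H(m+1)$: all $\phi_n$ with $n<m$ being global, $\Phi_m$ and hence $S_m\Phi_m$ is a global solution of \eqref{eq:WMmain}; rescaling the equation for $\phi_m$ by $S_m$ exhibits $S_m\phi_m$ as the solution of \eqref{eq:WMpert} with background $S_m\Phi_m$ and data $(\mathring{S}_m\mathring{\phi}_m,\, 2^m\mathring{S}_m\mathring{\varphi}_m)$, which is supported in the unit ball. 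Writing $\gamma_m\le a_m$ for the dispersive constant of $S_m\Phi_m$, the Corollary's hypothesis forces the data size to be $\le\exp(-\lambda(1+a_m)^2)\le\exp(-\lambda(1+\gamma_m)^2)$, which is precisely the smallness threshold of \thref{thm:pert}. Hence $S_m\phi_m$, and therefore $\phi_m$, exists globally; moreover by the proof of \thref{thm:pert} together with \thref{thm:GlobSob} the solution $S_m\phi_m$ satisfies the bounds of \thref{defn:dispsoln} with some constant $\kappa$ bounded by a $k$-dependent multiple of $e^{-\lambda/2}$. Subadditivity of the dispersive norm gives that $S_m\Phi_{m+1} = S_m\Phi_m + S_m\phi_m$ is dispersive with constant $\le a_m+\kappa$, and one application of \thref{lem:comparisonscale} (its statement with the index raised by one, via $S_{m+1}=S_1\circ S_m$) yields that $S_{m+1}\Phi_{m+1}$ is dispersive with constant $\le C(a_m+\kappa)$. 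Enlarging $\lambda$ within its $k$-dependent admissible range so that $C\kappa\le 1$, this is $\le Ca_m+1 = a_{m+1}$, establishing $H(m+1)$.

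With $H(m)$ proved for all $m$, every $\phi_m$ exists globally; since finite speed of propagation makes $\phi_m$ vanish on $\{\abs{t}+\abs{x}\le 2^{m-2}\}$, the series $\phi = \sum_{m\ge 0}\phi_m$ is locally a finite sum, hence a smooth global function, and telescoping the perturbation equations shows it solves \eqref{eq:WMmain} with data $(\mathring{\phi},\mathring{\varphi})$. I expect the crux to be the quantitative balancing inside the inductive step: one must use that \thref{thm:pert} delivers a perturbation whose dispersive norm is of size $e^{-\lambda/2}$ --- the bare $M\mapsto M+1$ form recorded in \thref{rmk:otherderag} is too wasteful here --- so that, after the factor-$C$ loss incurred at each rescaling step by \thref{lem:comparisonscale}, the constants obey the benign recursion $a_{m+1}=Ca_m+1$ rather than $C(a_m+1)$; only the former telescopes to the $\frac{C^m-1}{C-1}$ appearing in the statement. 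A subsidiary point is to confirm that \thref{lem:comparisonscale} truly applies to the growing-support object $S_m\Phi_{m+1}$ at each stage, which follows from its being dispersive with constant $\le a_m+\kappa$ and from its support lying, again by finite speed of propagation, in $\{t\ge 1+\abs{x}\}\cap\{t\ge 2\}$.
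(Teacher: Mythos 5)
Your induction is precisely the ``simple induction argument using \thref{thm:pert}'' that the paper invokes without writing out: at each stage solve for the rescaled piece via the perturbation theorem with background $S_m\Phi_m$, absorb it into the background, and pay the factor $C$ of \thref{lem:comparisonscale} upon rescaling by one more level, with the base case $\Phi_0\equiv 0$ and the support/comparability facts ($S_m\phi_n$ data in the unit ball, $\tau$ and $S_m\tau$ comparable) exactly as in the paper. The one point where you go beyond the paper's hints is a genuine (and correct) refinement: the crude $M\mapsto M+1$ bookkeeping of \thref{rmk:otherderag} would give the recursion $M_{m+1}\leq C(M_m+1)$, hence the slightly worse constant $C\frac{C^m-1}{C-1}$, whereas your observation that the perturbation produced by \thref{thm:pert} has dispersive size $O_k(e^{-\lambda/2})$ --- so $\leq 1/C$ after enlarging $\lambda$, which is permissible since the theorem only improves under a larger $\lambda$ --- closes the recursion as $a_{m+1}=Ca_m+1$ and recovers the exact constant $1+\frac{C^m-1}{C-1}$ appearing in the statement.
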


We can rephrase the smallness and localization of the initial data in terms of a weighted Sobolev space that requires super-exponential decay of data. 
\begin{cor}
	There exists constants $\kappa, \nu > 0$ such that whenever the initial data $(\mathring{\phi}, \mathring{\varphi})$ is sufficient small in the weighted Sobolev space 
	\[ H^{k+1}(\mathbb{R}^2, \exp(\nu(1 + |x|^2)^\kappa)\D x) \times H^k(\mathbb{R}^2, \exp(\nu(1 + |x|^2)^\kappa)\D x)\]
	there exists a global solution to the initial value problem for \eqref{eq:WMmain}. 
\end{cor}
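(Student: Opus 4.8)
The plan is to reduce everything to the previous corollary: it suffices to produce $\kappa, \nu > 0$ such that smallness of $(\mathring\phi, \mathring\varphi)$ in the indicated weighted Sobolev space forces
\[
	\sup_{m \geq 0} \left( \norm[H^{k+1}]{\mathring S_m \mathring\phi_m} + 2^m \norm[H^k]{\mathring S_m \mathring\varphi_m} \right) \exp\Bigg( \lambda \Big(1 + \frac{C^m - 1}{C-1} \Big)^{2} \Bigg) \leq 1,
\]
where $C$ is the constant of \thref{lem:comparisonscale} and $\lambda$ the constant of \thref{thm:pert}. I would estimate the two competing factors in this supremum separately, and then pick $\kappa$ large enough that the exponential decay hidden in the first factor defeats the growth of the second.

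The two ingredients are elementary. For the \emph{scaling}: since $d = 2$, the substitution $y = 2^m x$ gives $\norm[L^2]{\partial^\alpha (\mathring S_m g)}^2 = 2^{2m(|\alpha|-1)} \norm[L^2]{\partial^\alpha g}^2$ for every multi-index $\alpha$, so summing over $|\alpha| \leq k+1$ (resp.\ $\leq k$), with the top-order term dominating, yields $\norm[H^{k+1}]{\mathring S_m \mathring\phi_m} \leq 2^{mk} \norm[H^{k+1}]{\mathring\phi_m}$ and $2^m \norm[H^k]{\mathring S_m \mathring\varphi_m} \leq 2^{mk} \norm[H^k]{\mathring\varphi_m}$. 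For the \emph{localization}: writing $\mathring\phi_m = \chi_m \mathring\phi$ and reading off from $\chi_m(x) = \chi_0(2^{-m}x) - \chi_0(2^{1-m}x)$ that each $\partial^\beta \chi_m$ is bounded uniformly in $m$ and supported on $\mathrm{supp}\,\chi_m$, on which $(1+|x|^2)^\kappa \geq 2^{-4\kappa} 2^{2\kappa m}$, Leibniz's rule together with this weight bound give, with $c \eqdef \nu 2^{-4\kappa}$ and $w \eqdef \exp(\nu(1+|x|^2)^\kappa)$,
\[
	\norm[H^{k+1}]{\mathring\phi_m} + \norm[H^{k}]{\mathring\varphi_m} \lesssim e^{-\frac{1}{2} c\, 2^{2\kappa m}}\Big( \norm[H^{k+1}(\mathbb{R}^2, w)]{\mathring\phi} + \norm[H^{k}(\mathbb{R}^2, w)]{\mathring\varphi} \Big),
\]
with implied constant depending only on $k$ and $\chi_0$.

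Combining these, the supremum above is bounded by $\big(\norm[H^{k+1}(\mathbb{R}^2,w)]{\mathring\phi} + \norm[H^{k}(\mathbb{R}^2,w)]{\mathring\varphi}\big)$ times $\sup_{m\geq 0} \exp\big( mk\ln 2 - \tfrac{1}{2} c\, 2^{2\kappa m} + \lambda(1+\tfrac{C^m-1}{C-1})^2\big)$. Since $\big(1 + \tfrac{C^m-1}{C-1}\big)^2 \lesssim C^{2m}$ (constant depending on $C$), the exponent is controlled by $mk\ln 2 + \lambda' C^{2m} - \tfrac{1}{2} c\,(2^{2\kappa})^m$; both $C^{2m}$ and $(2^{2\kappa})^m$ are geometric in $m$, so this expression tends to $-\infty$ precisely when $2^{2\kappa} > C^2$. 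The plan is therefore to fix $\kappa$ to be any real number with $\kappa > \log_2 C$ and $\nu$ any positive number; then $\sup_m(\cdots) = C_\ast < \infty$ depends only on $k$, $\kappa$, $\nu$ (and the structural constants $C$, $\lambda$), and requiring $\norm[H^{k+1}(\mathbb{R}^2,w)]{\mathring\phi} + \norm[H^{k}(\mathbb{R}^2,w)]{\mathring\varphi} \leq 1/C_\ast$ supplies the hypothesis of the previous corollary, hence a global solution.

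I expect the only genuine difficulty to be this final ``double-exponential race''. The perturbation \thref{thm:pert} costs a factor $\exp(\lambda(1+M)^2)$ at each dyadic step, and by \thref{lem:comparisonscale} the dispersive constant $M$ of the $m$-th approximate background compounds multiplicatively like $C^m$, so the tail of the data must already be suppressed at spatial scale $|x| \sim 2^m$ by $\exp(-\text{const}\cdot 2^{2\kappa m})$ with $\kappa$ \emph{strictly} above $\log_2 C$; the stretched-exponential weight $\exp(\nu(1+|x|^2)^\kappa)$ is precisely tuned to provide this, and it is the rate $\kappa$, not the size $\nu$, that is constrained. Everything else is the two elementary reductions above, together with the induction already carried out in the proof of the previous corollary.
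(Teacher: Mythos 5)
Your proposal is correct and follows exactly the route the paper intends (the paper states this corollary without proof as a rephrasing of the preceding one): dyadic scaling gives the $2^{mk}$ loss, the annular support of $\chi_m$ converts the weight $\exp(\nu(1+|x|^2)^\kappa)$ into a gain $\exp(-c\,2^{2\kappa m})$, and choosing $\kappa>\log_2 C$ makes this gain beat the iterated perturbation cost $\exp(\lambda'\,C^{2m})$ uniformly in $m$, so smallness in the weighted space implies the hypothesis of the previous corollary. The identification that it is the exponent $\kappa$ (not $\nu$) that is constrained by the constant $C$ of \thref{lem:comparisonscale} is precisely the point of the paper's remark about super-exponentially decaying tails.
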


\printbibliography
\end{document}